\documentclass[a4paper]{amsart}                % American Mathematical Society document class for articles
%\documentclass[a4paper,draft]{amsart}          % Modo "borrador" para controlar los margenes

% Agranda los margenes
\addtolength{\textwidth}{2cm} \addtolength{\hoffset}{-1cm}
\addtolength{\textheight}{2cm} \addtolength{\voffset}{-1cm}

\usepackage[latin1]{inputenc}                  % Codificacion europea del teclado (acentos, simbolos teclado ...)
\usepackage[T1]{fontenc}                       % Acentos de palabras y division silabica
\usepackage[spanish,english]{babel}            % Idioma principal el ingles
\usepackage[pdftex]{graphicx}                  % Incorporar graficos
\usepackage{amsmath,amssymb,amsthm}            % Paquetes de la American Mathematical Society
\usepackage{latexsym}                          % Simbolos
\usepackage{delarray}                          % Delimiter array: permite integrar delimitadores en las opciones del entorno array
\usepackage{bbm}                               % Fuentes para los numeros reales, racionales, ...
\usepackage{hyperref}                          % Enlaces de "hipertexto"
\usepackage[pdftex,usenames,dvipsnames]{color} % Muy importante para el color
\usepackage{bbding,trfsigns}                   % Simbolos
\usepackage{wasysym}                           % Variar el tama�o de los simbolos
\usepackage{datetime}                          % Imprimir fecha y hora

\DeclareMathAlphabet{\mathpzc}{OT1}{pzc}{m}{it} % Tipo de letra \mathpzc

\newcommand{\B}{\mathbb{B}}
\newcommand{\C}{\mathbb{C}}

\newcommand{\E}{\mathbb{E}}

\newcommand{\G}{\Gamma}

\newcommand{\N}{\mathbb{N}}
\newcommand{\Q}{\mathbb{Q}}
\newcommand{\R}{\mathbb{R}}
\newcommand{\Rn}{{\mathbb{R}^n}}

\newcommand{\eps}{\varepsilon}

\DeclareMathOperator{\supp}{supp}
\DeclareMathOperator{\spann}{span}

\newtheorem{ThA}{Theorem}

\newtheorem{Th}{Theorem}[section]              % Enumera los teoremas de acuerdo con la seccion (Theorem 1.1, Theorem 1.2 , ...)

\newtheorem{Prop}{Proposition}[section]
\newtheorem{Lem}{Lemma}[section]

\title[UMD-valued Bessel square functions in Hardy and BMO spaces]
      {UMD-valued square functions associated with Bessel operators in Hardy and BMO spaces}

\author[J.J. Betancor]{Jorge J. Betancor}
\author[A.J. Castro]{Alejandro J. Castro}
\author[L. Rodr\'iguez-Mesa]{L. Rodr\'iguez-Mesa}

\address{\newline
        Jorge J. Betancor, Alejandro J. Castro and Lourdes Rodr\'iguez-Mesa \newline
        Departamento de An\'alisis Matem\'atico,
        Universidad de La Laguna, \newline
        Campus de Anchieta, Avda. Astrof\'{\i}sico Francisco S\'anchez, s/n, \newline
        38271, La Laguna (Sta. Cruz de Tenerife), Spain}
\email{jbetanco@ull.es, ajcastro@ull.es, lrguez@ull.es}

\keywords{Square functions, Bessel operators, UMD spaces, Hardy, BMO}

\subjclass[2010]{46E40, 42A50, 42B25, 42B35}

\thanks{The authors are partially supported by MTM2010/17974.
The second author is also supported by a FPU grant from the Government of Spain.}

\begin{document}

\footnotetext{Date: \today.}

\maketitle                                  % Si no se activa esta opcion no se pone ni el titulo ni los autores en el encabezado de cada pagina

\begin{abstract}
    We consider Banach valued Hardy and BMO spaces in the Bessel setting. Square
    functions associated with Poisson semigroups for Bessel operators are defined by using
    fractional derivatives. If $\B$ is a UMD Banach space we obtain for $\B$-valued
    Hardy and BMO spaces equivalent norms involving $\gamma$-radonifying operators
    and square functions. We also establish characterizations of UMD Banach spaces by
  using Hardy and BMO-boundedness properties of $g$-functions associated to Bessel-Poisson semigroup.
\end{abstract}

%%%%%%%%%%%%%%%%%%%%%%%%%%%%%%%%%%%%%%%%%%%%%%%%%%%%%%%%%%%%%%%%%%%%%%%%%%%%%%%%%%%%%%%%%%%%%%%%%%%%%%%%%%%%%%%%%%%%
\section{Introduction}\label{sec:intro}
%%%%%%%%%%%%%%%%%%%%%%%%%%%%%%%%%%%%%%%%%%%%%%%%%%%%%%%%%%%%%%%%%%%%%%%%%%%%%%%%%%%%%%%%%%%%%%%%%%%%%%%%%%%%%%%%%%%%

If $\{P_t\}_{t>0}$ denotes the classical Poisson semigroup, for every $k \in \N$, the $k$-th square
(also called Littlewood-Paley-Stein) function $g_k(\{P_t\}_{t>0})$ is defined by
$$g_k(\{P_t\}_{t>0})(f)(x)
    = \left( \int_0^\infty \left| t^k \partial_t^k P_t(f)(x) \right|^2 \frac{dt}{t}\right)^{1/2}, \quad x \in \Rn,$$
for every $f \in L^p(\Rn)$. It is well-known that, for every $k \in \N$ and $1<p<\infty$, there exists $C>0$
such that
\begin{equation}\label{I1}
    \frac{1}{C} \|f\|_{L^p(\Rn)}
        \leq \| g_k(\{P_t\}_{t>0})(f)\|_{L^p(\Rn)}
        \leq C \|f\|_{L^p(\Rn)}, \quad f \in L^p(\Rn),
\end{equation}
or, in other words, for every $k \in \N$, the norm $\| \cdot \|_{p,k}$ defined by
$$\|f\|_{p,k}
    = \| g_k(\{P_t\}_{t>0})(f)\|_{L^p(\Rn)}, \quad f \in L^p(\Rn),$$
is equivalent to the usual norm in $L^p(\Rn)$. These equivalent norms $\| \cdot \|_{p,k}$ are more
suitable to establish $L^p$-boundedness properties of certain operators (for instance, Fourier multipliers (\cite[p. 58]{Ste1})).

Square functions have been also defined for other semigroups of operators. In \cite{Ste1} it was developed
the Littlewood-Paley theory for diffusion semigroups. If $\{T_t\}_{t>0}$ is a diffusion semigroup
(in the sense of Stein) on the measure space $(\Omega, \Sigma, \mu)$ where $\mu$ is a $\sigma$-finite measure
defined on the $\sigma$-algebra $\Sigma$ in $\Omega$, for every $k \in \N$, we define
$$g_k(\{T_t\}_{t>0})(f)(x)
    = \left( \int_0^\infty \left| t^k \partial_t^k T_t(f)(x) \right|^2 \frac{dt}{t}\right)^{1/2}, \quad x \in \Omega,$$
for every $f \in L^p(\Omega,\mu)$, $1<p<\infty$.

We have that, for every $k \in \N$ and $1<p<\infty$, there exists a constant $C>0$ such that
\begin{equation}\label{I2}
    \frac{1}{C} \|f-E_0(f)\|_{L^p(\Omega,\mu)}
        \leq \| g_k(\{T_t\}_{t>0})(f)\|_{L^p(\Omega,\mu)}
        \leq C \|f\|_{L^p(\Omega,\mu)},
\end{equation}
for every $f \in L^p(\Omega,\mu)$ (\cite{Hy}).
Here $E_0$ denotes the projector from $L^p(\Omega,\mu)$ to the fixed point space of $\{T_t\}_{t>0}$.
In (\cite[Corollary 3, p. 121]{Ste1}) \eqref{I2} was used to study
$L^p$-boundedness of Laplace transform type multipliers associated to $\{T_t\}_{t>0}$. Spectral
multipliers for a general class of operators were analyzed by Meda (\cite{Me1}) by using $g$-functions .

Also, the square functions have been defined by using convolutions. Suppose that $\psi \in L^2(\Rn)$.
We define the square function $g_\psi$ as follows:
$$g_\psi(f)(x)
    = \left( \int_0^\infty \left| \psi_t * f (x) \right|^2 \frac{dt}{t}\right)^{1/2}, \quad x \in \Rn,$$
where $\psi_t(x)=t^{-n} \psi(x/t)$, $x \in \Rn$ and $t>0$. Conditions on the function $\psi$ can be given in order that the norm $\| \cdot \|_\psi$ defined  by
$$\|f\|_\psi
    = \| g_\psi(f)\|_{L^p(\Rn)}, \quad f \in L^p(\Rn),$$
is equivalent to the usual norm in $L^p(\Rn)$ (see \cite[Remark 2.3]{KaWe}). Note that the classical Poisson semigroup is a convolution
semigroup.

Assume now that $\B$ is a Banach space and $\Omega \subset \Rn$ (for us, usually, $\Omega=\R$ or $\Omega=(0,\infty)$).
For every $1 \leq p < \infty$, we denote by $L^p(\Omega,\B)$ the $p$-th Bochner-Lebesgue $\B$-valued function
space with respect to the Lebesgue measure. By $L^{1,\infty}(\Omega,\B)$ we represent the weak
$L^1$-Bochner-Lebesgue $\B$-valued function space.

Suppose that $T$ is a bounded operator from $L^p(\Rn)$ into itself, for some $1 \leq p <\infty$.
We can define the tensor operator $T \otimes I_\B$ on $L^p(\Rn) \otimes \B$ in the natural way.
Here $I_\B$ denotes the identity operator in $\B$. We cannot ensure that $T \otimes I_\B$ can be
extended to $L^p(\Rn,\B)$ as a bounded operator in $L^p(\Rn,\B)$. However, if $T$ is a positive
operator, that is, $T(f) \geq 0$ when $f \geq 0$, then $T \otimes I_\B$ can be extended from
$L^p(\Rn) \otimes \B$ to $L^p(\Rn,\B)$ as a bounded operator from  $L^p(\Rn,\B)$ into itself.

If $\{T_t\}_{t>0}$ is a diffusion semigroup, $T_t$ is a positive operator in $L^p(\Omega)$, for every
$t>0$ and $1<p<\infty$. Then, for every $t>0$ the operator $T_t \otimes I_\B$ can be extended to
$L^p(\Omega,\B)$ as a bounded operator from $L^p(\Omega,\B)$ into itself, for every $1<p<\infty$. We continue denoting this extension by $T_t$.

In order to define square functions acting on $\B$-valued functions the more natural way is to replace the modulus in the
definitions by the norm $\| \cdot \|_\B$ in $\B$. We consider, for every $k \in \N$ and $1<p<\infty$,
$$g_{k,\B}(\{P_t\}_{t>0})(f)(x)
    = \left( \int_0^\infty \left\| t^k \partial_t^k P_t(f)(x) \right\|_\B^2 \frac{dt}{t}\right)^{1/2}, \quad x \in \Rn,$$
for every $f \in L^p(\Rn,\B)$.

Kwapie\'n \cite{Kw} established that the Banach-valued version of \eqref{I1} characterizes the Hilbert spaces in the following sense:
$\B$ is isomorphic to a Hilbert space if, and only if, for some (equivalently, for every) $1<p<\infty$,
there exits $C>0$ such that
\begin{equation}\label{I3}
    \frac{1}{C} \|f\|_{L^p(\Rn,\B)}
        \leq \| g_{1,\B}(\{P_t\}_{t>0})(f)\|_{L^p(\Rn,\B)}
        \leq C \|f\|_{L^p(\Rn,\B)},
\end{equation}
for every $f \in L^p(\Rn,\B)$. For this type of vector valued $g$-functions the question is to describe
the Banach spaces $\B$ for which one of the two inequalities in \eqref{I3} holds. This problem was considered
in the first time by Xu \cite{Xu} by using square functions defined by the Poisson semigroup for the torus.
In \cite{Xu} Xu  introduced generalized square functions where the exponent $2$ is replaced by $q \in (1,\infty)$
and he characterized the Banach spaces of $q$-martingale type and cotype as those for which some of the inequalities
for the $q$-square function in \eqref{I3} holds. After Xu's results, other authors have investigated this
question for vector valued $q$-square functions associated with other semigroups of operators
(see \cite{AST}, \cite{BFMT}, \cite{MTX} and \cite{TZ}, amongst others).

Hyt\"onen \cite{Hy} and Kaiser and Weis (\cite{Ka} and \cite{KaWe}) have introduced other definitions of square
functions in vector valued settings. They obtained, by using these new square functions, equivalent norms in
$L^p(\Rn,\B)$, $1<p<\infty$, and also in the Hardy space $H^1(\Rn,\B)$ and in the bounded mean
oscillation function space $BMO(\Rn,\B)$, provided that $\B$ is a UMD Banach space.

Hyt\"onen (in \cite{Hy}) defined square functions associated with subordinated diffusion semigroups in terms of
stochastic integrals. Kaiser and Weis (\cite{Ka} and \cite{KaWe}) used $\gamma$-radonifying operators
to get equivalent norms by employing convolution type square functions. Both approaches
(stochastic integrals and $\gamma$-radonifying operators) are connected (see, for instance,
\cite{NeVeWe}). In this paper, we will work with square functions
involving Poisson semigroups associated with Bessel operators and we will use $\gamma$-radonifying operators.

UMD Banach spaces (as in \cite{Hy}, \cite{Ka} and \cite{KaWe}) play an important role in our results. The Hilbert transform
$\mathcal{H}(f)$ of $f$ is defined by
$$\mathcal{H}(f)(x)
    = \lim_{\eps \to 0^+} \frac{1}{\pi} \int_{|x-y|>\eps} \frac{f(y)}{x-y} dy, \quad \text{a.e. } x \in \R,$$
for every $f \in L^p(\R)$, $1 \leq p<\infty$. It is a key result in harmonic analysis that the Hilbert transform
is a bounded operator from $L^p(\R)$ into itself, for every $1 < p<\infty$, and from $L^1(\R)$ into
$L^{1,\infty}(\R)$.

It is clear that the Hilbert transform is not a positive operator in $L^p(\R)$, $1 \leq p < \infty$.
A Banach space $\B$ is said to be a UMD Banach space when the operator $\mathcal{H} \otimes I_\B$ can be extended from
$L^p(\R) \otimes \B$ to $L^p(\R,\B)$ as a bounded operator from $L^p(\R,\B)$ into itself, for some $1<p<\infty$.
This extension property does not depend on $1<p<\infty$ in the following sense: the extension property holds
for some $1<p<\infty$ if, and only if, it is true for every $1<p<\infty$.

The main properties of UMD Banach spaces were established by Bourgain (\cite{Bou})
and Burkholder (\cite{Bu1}). There exist a lot of characterizations for UMD Banach spaces
(see, for instance, \cite{Bou}, \cite{Buk}, \cite{Bu1},  \cite{CL}, \cite{Gue}, \cite{SchSi},  and \cite{Xu}).

We recall now some definitions and properties concerning $\gamma$-radonifying operators. Suppose
that $(\gamma_k)_{k \in \N}$ is a sequence of independent standard Gaussian random variables on a probability
space $(\Omega, \Sigma ,\mathbb{P})$, $\B$ is a Banach space and $H$ is a Hilbert space. If
$T : H \longrightarrow \B$ is a linear operator we define $\|T\|_{\gamma(H,\B)}$ as follows:
$$\|T\|_{\gamma(H,\B)}
    = \sup \left( \E \left\| \sum_k \gamma_k T(h_k)\right\|_\B^2 \right)^{1/2},$$
where the supremum is taken over all the finite ortonormal sets $\{h_k\}$ in H. Here $\E$ denotes the
expectation in $(\Omega, \Sigma ,\mathbb{P})$. The space $\gamma(H,\B)$ of $\gamma$-radonifying operators from
$H$ to $\B$ is defined as the completion, with respect to $\|\cdot\|_{\gamma(H,\B)}$, of the space of finite
rank operators in $L(H,\B)$, the space of bounded linear operators from $H$ into $\B$.

If $H$ is a separable Hilbert space and the Banach space $\B$ does not contain copies of $c_0$ then
$$\|T\|_{\gamma(H,\B)}
    = \left( \E \left\| \sum_{k=1}^\infty \gamma_k T(h_k) \right\|_\B^2 \right)^{1/2}, \quad T \in \gamma(H,\B),$$
where $\{h_k\}_{k=1}^\infty$ is an orthonormal basis of $H$. Note that the quantity in the right hand side
in the last equality does not depend of the orthonormal basis $\{h_k\}_{k=1}^\infty$ of $H$. If $\B$ is a UMD
Banach space, $\B$ does not contain copies of $c_0$.

Suppose now that $H=L^2(A,\G,\mu)$ where $(A,\G,\mu)$ is a $\sigma$-finite measure space with countably generating
$\sigma$-algebra $\G$ and let $\B$ be a Banach space. If $f : A \longrightarrow \B$ is weakly continuous in $H$, that is, for every $S \in \B^*$, the dual
space of $\B$, $S \circ f \in H$, then there exists $T_f \in L(H,\B)$ such that
$$\langle S, T_f(h) \rangle_{\B^*,\B}
    = \int_A \langle S, f(t) \rangle_{\B^*,\B} h(t) d\mu(t), \quad h \in H.$$
It is usual to write $f \in \gamma(A,\mu,\B)$ when $T_f \in \gamma(H,\B)$ and, to simplify, to identify $f$ with
$T_f$. If $\B$ does not contain copies of $c_0$, the space $\{T_f, f \in \gamma(A,\mu,\B)\}$ is dense in $\gamma(H,\B)$.
Throughout this paper we consider $H=L^2((0,\infty),dt/t)$.

As a consequence of \cite[Theorem 4.2]{KaWe} we can deduce the following result.

\begin{ThA}\label{Th:A}
    Let $\B$ be a UMD Banach space and $k \in \N$. We define
    $$G_{k,\B}(f)(t,x)
        = t^k \partial_t^k P_t(f)(x), \quad x \in\Rn \text{ and } t>0,$$
    for every $f \in S(\Rn,\B)$, the $\B$-valued Schwartz function space. Then, there exists $C>0$ such that
    \begin{equation}\label{I3'}
    \frac{1}{C} \|f\|_{E(\Rn,\B)}
        \leq \| G_{k,\B}(f)\|_{E(\Rn,\gamma(H,\B))}
        \leq C \|f\|_{E(\Rn,\B)},
    \end{equation}
    where $E=L^p$, $1<p<\infty$, $E=H^1$ or $E=BMO$.
\end{ThA}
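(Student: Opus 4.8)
The plan is to realize $G_{k,\B}$ as a convolution-type square function and then to invoke \cite[Theorem 4.2]{KaWe}, where equivalences of the form \eqref{I3'} are established, for $E=L^p$, $H^1$ and $BMO$ and $\B$ a UMD space, under suitable size, smoothness and non-degeneracy conditions on the convolving kernel.

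First I would make the convolution structure explicit. Writing $P_t(f)=p_t*f$ with $p_t(x)=t^{-n}p(x/t)$ and $p(x)=c_n(1+|x|^2)^{-(n+1)/2}$, $x\in\Rn$, the homogeneity of $p_t$ supplies, for each $k\in\N$, a function $\psi=\psi_k$ on $\Rn$ such that
\[
t^k\partial_t^kp_t(x)=t^{-n}\psi(x/t)=:\psi_t(x),\qquad x\in\Rn,\ t>0,
\]
namely $\psi(x)=\bigl(t^k\partial_t^k[t^{-n}p(x/t)]\bigr)\bigl|_{t=1}$. A direct computation shows that $\psi$ is a finite linear combination of functions of the form $|x|^{2a}(1+|x|^2)^{-(n+1)/2-b}$ with $0\le a\le b\le k$; in particular $\psi\in C^\infty(\Rn)$ and
\[
|\partial^\alpha\psi(x)|\le C_\alpha(1+|x|)^{-n-1-|\alpha|},\qquad x\in\Rn,
\]
for every multi-index $\alpha$. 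Hence $G_{k,\B}(f)(t,\cdot)=\psi_t*f$ for every $f\in S(\Rn,\B)$, and for each $x\in\Rn$ the map $t\mapsto\psi_t*f(x)$ defines an element of $\gamma(H,\B)$, so that the middle term of \eqref{I3'} is meaningful.

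Next I would check that $\psi$ meets the hypotheses of \cite[Theorem 4.2]{KaWe}. The size and smoothness estimates above furnish the required H\"ormander-type control of the kernel $(x,y,t)\mapsto\psi_t(x-y)$. The cancellation $\int_{\Rn}\psi(x)\,dx=0$ follows by differentiating $k\ge1$ times in $t$ the identity $\int_{\Rn}p_t(x)\,dx=1$. The non-degeneracy (Calder\'on reproducing) condition holds because $\widehat{p_t}(\xi)=e^{-t|\xi|}$, so $\widehat{\psi_t}(\xi)=t^k\partial_t^ke^{-t|\xi|}=(-1)^k(t|\xi|)^ke^{-t|\xi|}$ and, after the change of variables $s=t|\xi|$,
\[
\int_0^\infty|\widehat{\psi_t}(\xi)|^2\,\frac{dt}{t}=\int_0^\infty s^{2k-1}e^{-2s}\,ds=\frac{\Gamma(2k)}{2^{2k}},\qquad\xi\in\Rn\setminus\{0\},
\]
a finite positive constant independent of $\xi$. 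Finally, since $\B$ is UMD it contains no copy of $c_0$, so the $\gamma(H,\B)$-norm is given by the series formula and the operators attached to functions in $\gamma((0,\infty),dt/t,\B)$ are dense in $\gamma(H,\B)$; this makes the $\gamma$-radonifying reformulation of the square function legitimate.

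With these verifications in hand, \cite[Theorem 4.2]{KaWe} applies and yields \eqref{I3'} for $E=L^p$ ($1<p<\infty$), $E=H^1$ and $E=BMO$. The step I expect to require the most care is not any of the kernel estimates, which are routine, but the translation between our data and the precise hypotheses and normalizations of \cite[Theorem 4.2]{KaWe} --- in particular whether the admissibility conditions there are imposed on $\psi$ or on the associated kernel, and the handling of the $H^1$ and $BMO$ endpoints: density of $S(\Rn,\B)$ in $H^1(\Rn,\B)$, the $\gamma(H,\B)$-valued $H^1$--$BMO$ duality, and the convergence of the defining integral in the $BMO$ case. Once this dictionary is fixed, the conclusion is immediate.
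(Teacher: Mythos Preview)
Your proposal is correct and follows precisely the route the paper indicates: the paper does not give a proof of Theorem~A at all but simply states it ``as a consequence of \cite[Theorem 4.2]{KaWe}'', and you have supplied the missing verification that $t^k\partial_t^k P_t$ is a convolution with a dilated kernel $\psi$ satisfying the size, cancellation and non-degeneracy hypotheses of that theorem. Your Fourier computation of the Calder\'on condition and the decay estimates for $\psi$ are accurate; the only thing to add is that the paper treats Theorem~A as a black-box import, so your write-up is more detailed than what the authors provide.
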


Note that, since $\gamma(H,\C)=H$, \eqref{I3'} can be seen as a Banach valued extension of \eqref{I1}.

Motivated by Theorem~\ref{Th:A}, our objective in this paper is to obtain equivalent norms for Hardy and
BMO spaces defined via Bessel operators by using square functions involving Bessel Poisson semigroups in a Banach
valued setting.

The study of harmonic analysis associated with Bessel operators was started by Muckenhoupt and Stein (\cite{MS}).
We consider the Bessel operator
$$\Delta_\lambda=-x^{-\lambda} \frac{d}{dx} x^{2\lambda} \frac{d}{dx} x^{-\lambda} \text{ on } (0,\infty),$$
where $\lambda >0$. The Hankel transform $h_\lambda$ is defined by
$$h_\lambda(f)(x)
    = \int_0^\infty \sqrt{xy} J_{\lambda-1/2}(xy)f(y)dy, \quad x \in (0,\infty),$$
for every $f \in L^1(0,\infty)$. By $J_\nu$ we denote the Bessel function of the first kind and order $\nu$.
The Hankel transform can be extended from $L^1(0,\infty) \cap L^2(0,\infty)$ to $L^2(0,\infty)$ as an isometry
in $L^2(0,\infty)$.

The space $S_\lambda(0,\infty)$ is constituted by all those smooth functions $\phi$ on $(0,\infty)$ such that,
for every $m,k \in \N$,
$$\eta_{m,k}^\lambda(\phi)
    = \sup_{x \in (0,\infty)} (1+x^2)^m \left| \left( \frac{1}{x} \frac{d}{dx} \right)^k \left(x^{-\lambda} \phi(x) \right) \right| < \infty.$$
$S_\lambda(0,\infty)$ is endowed with the topology generated by the system $\{\eta_{m,k}^\lambda\}_{m,k \in \N}$ of seminorms.
Thus, $S_\lambda(0,\infty)$ is a Fr\'echet space and the Hankel transform $h_\lambda$ is a bounded bijective operator from $S_\lambda(0,\infty)$
into itself and $h_\lambda^{-1}=h_\lambda$ (\cite[Theorem 5.4-1]{Ze}).

The Bessel operator $\Delta_\lambda$ and the Hankel transform $h_\lambda$ are closely connected, as the following
equality shows:
$$h_\lambda(\Delta_\lambda f)(x)
    = x^2 h_\lambda(f)(x), \quad x \in (0,\infty),$$
for every $f \in S_\lambda(0,\infty)$. The Hankel transform plays for the Bessel operator the same role as the
Fourier transformation with respect to the Laplacian operator.

We define the operator $\sqrt{\Delta_\lambda}$ as follows:
$$\sqrt{\Delta_\lambda} f
    = h_\lambda \left( y h_\lambda(f)\right), \quad f \in D(\sqrt{\Delta_\lambda}),$$
where
$D(\sqrt{\Delta_\lambda})= \{f \in L^2(0,\infty) : y h_\lambda(f) \in L^2(0,\infty) \}$.
The Poisson semigroup $\{P_t^\lambda\}_{t>0}$ associated with the Bessel operator $\Delta_\lambda$, that is,
generated by the operator $-\sqrt{\Delta_\lambda}$, is defined by
$$P_t^\lambda(f)(x)
    =\int_0^\infty P_t^\lambda(x,y)f(y)dy, \quad t,x \in (0,\infty),$$
for every $f \in L^p(0,\infty)$, $1 \leq p \leq \infty$, where
$$P_t^\lambda(x,y)
    = \frac{2\lambda t (xy)^\lambda}{\pi} \int_0^\pi \frac{(\sin \theta)^{2\lambda-1}}{\left[ (x-y)^2+t^2+2xy(1-\cos \theta) \right]^{\lambda+1}}d\theta,
    \quad t,x,y \in (0,\infty).$$
Square functions defined by the Bessel Poisson semigroup have been studied in \cite{BCR3}, \cite{BFMT}, \cite{BFS} and \cite{StemPhD}, amongst others.
In \cite{BFMT} it was considered generalized Littlewood-Paley functions associated with $\{P_t^\lambda\}_{t>0}$ in a Banach
valued setting. If $\B$ is a Banach space and $1<q<\infty$ we consider the $q$-square function defined by
$$g_\B^q(\{P_t^\lambda\}_{t>0})(f)(x)
    = \left( \int_0^\infty \left\| t \partial_t P_t^\lambda(f)(x) \right\|_\B^q \frac{dt}{t} \right)^{1/q}, \quad x \in (0,\infty).$$
By using these Littlewood-Paley functions, martingale type and cotype of the Banach space $\B$ can be characterized.
Moreover, as in the classical case \cite{Kw}, according to \cite[Theorems 2.4 and 2.5]{BFMT}, if $\lambda >0$,
the Banach space $\B$ is isomorphic to a Hilbert space if, and only if, for some (equivalently, for every) $1<p<\infty$, there exists $C>0$ such that
\begin{equation}\label{I4}
    \frac{1}{C} \|f\|_{L^p((0,\infty),\B)}
        \leq \| g_{\B}^2(\{P_t^\lambda\}_{t>0})(f)\|_{L^p(0,\infty)}
        \leq C \|f\|_{L^p((0,\infty),\B)}, \quad f \in L^p((0,\infty),\B).
\end{equation}

In order to extend the equivalence \eqref{I4} to Banach spaces that are not Hilbert spaces we define, motivated by
\cite{KaWe}, new square functions involving Bessel Poisson semigroup and $\gamma$-radonifying operators.

In \cite{SW} Segovia and Wheeden introduced fractional derivatives as follows. Suppose that
$F : (0,\infty) \times \R \longrightarrow \C$ is a nice enough function, $\beta>0$ and $m \in \N$ is such that
$m-1 \leq \beta < m$. The $\beta$-th derivative $\partial_t^\beta F$ of $F$ with respect to $t$ is defined by
$$\partial_t^\beta F(t,x)
    = \frac{e^{-i\pi(m-\beta)}}{\G(m-\beta)} \int_0^\infty \partial_t^m F(t+s,x) s^{m-\beta-1} ds, \quad x \in \R \text{ and } t>0.$$
We consider the operator
$$G_\B^{\lambda,\beta}(f)(t,x)
    = t^\beta \partial_t^\beta P_t^\lambda(f)(x), \quad t,x \in (0,\infty),$$
for every $f \in L^p((0,\infty),\B)$, $1 \leq p \leq \infty$.

The following result was established in \cite[Theorem 1.2]{BCR3}.

\begin{ThA}\label{Th:B}
    Let $\lambda>0$, $\beta>0$ and $1<p<\infty$. Suppose that $\B$ is a UMD Banach space. Then, there exists $C>0$
    such that
    $$\frac{1}{C} \|f\|_{L^p((0,\infty),\B)}
        \leq \| G_\B^{\lambda,\beta}(f)\|_{L^p((0,\infty),\gamma(H,\B))}
        \leq C \|f\|_{L^p((0,\infty),\B)}, \quad f \in L^p((0,\infty),\B).$$
\end{ThA}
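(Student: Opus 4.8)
The plan is to recognise $G_\B^{\lambda,\beta}$ as a \emph{functional calculus square function} attached to $\sqrt{\Delta_\lambda}$ and then to quote the Kalton--Weis square function theorem, so that the whole statement reduces to the boundedness of the $H^\infty$-calculus of $\Delta_\lambda$ on $L^p((0,\infty),\B)$. I would first work on the dense subspace $S_\lambda(0,\infty)\otimes\B$, where every manipulation below is legitimate, and then extend to all of $L^p((0,\infty),\B)$ by density together with the $L^2$-estimate that drops out along the way. On $S_\lambda(0,\infty)\otimes\B$ one inserts the Segovia--Wheeden definition of $\partial_t^\beta$ into $P_t^\lambda=e^{-t\sqrt{\Delta_\lambda}}$, uses $\partial_t^m P_t^\lambda=(-\sqrt{\Delta_\lambda})^m e^{-t\sqrt{\Delta_\lambda}}$ and, on the Hankel side where $\sqrt{\Delta_\lambda}$ is multiplication by $y>0$, the identity $\int_0^\infty e^{-s\sqrt{\Delta_\lambda}}\,s^{m-\beta-1}\,ds=\G(m-\beta)\,(\sqrt{\Delta_\lambda})^{-(m-\beta)}$, obtaining
$$ t^\beta\partial_t^\beta P_t^\lambda(f)=c_\beta\,\psi\!\left(t\sqrt{\Delta_\lambda}\right)f,\qquad \psi(z)=z^\beta e^{-z}, $$
with $|c_\beta|=1$. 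The function $\psi$ is non-degenerate and lies in $H^\infty_0(\Sigma_\sigma)$ for every $\sigma\in(0,\pi/2)$, since $|\psi(z)|\le C\min(|z|^\beta,|z|^{-1})$ on such sectors.

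\textbf{Main step: Kalton--Weis plus $\gamma$-Fubini.} Recall the Kalton--Weis square function theorem: if $B$ is a sectorial operator with a bounded $H^\infty(\Sigma_\nu)$-calculus on a Banach space $X$ with $X$ and $X^*$ of finite cotype, and $\varphi\in H^\infty_0(\Sigma_\mu)$ is non-degenerate with $\nu<\mu$, then $\|t\mapsto\varphi(tB)x\|_{\gamma(H,X)}\simeq\|x\|_X$ for all $x\in X$, where $H=L^2((0,\infty),dt/t)$. I apply this with $X=L^p((0,\infty),\B)$ — which is UMD since $\B$ is, hence $X$ and $X^*$ have finite cotype — with $B=\sqrt{\Delta_\lambda}$ and $\varphi=\psi$. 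Then the $\gamma$-Fubini isomorphism $\gamma\!\bigl(H,L^p((0,\infty),\B)\bigr)=L^p\!\bigl((0,\infty),\gamma(H,\B)\bigr)$ identifies $\|t\mapsto\psi(t\sqrt{\Delta_\lambda})f\|_{\gamma(H,X)}$ with $\|G_\B^{\lambda,\beta}(f)\|_{L^p((0,\infty),\gamma(H,\B))}$ up to the constant $|c_\beta|=1$, which is exactly the asserted two-sided estimate.

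\textbf{The main obstacle.} Everything therefore rests on verifying the hypothesis of the Kalton--Weis theorem: that $\sqrt{\Delta_\lambda}$ — equivalently $\Delta_\lambda$, with twice the angle — has a bounded $H^\infty(\Sigma_\omega)$-calculus on $L^p((0,\infty),\B)$ for some $\omega<\pi$ (so that $\nu=\omega/2<\pi/2$ can be chosen below $\sigma$) whenever $\B$ is UMD. On $L^2(0,\infty)$ this is immediate because the Hankel transform conjugates $\Delta_\lambda$ to multiplication by $y^2\ge0$, a non-negative self-adjoint operator. To reach $L^p((0,\infty),\B)$, $1<p<\infty$, I would prove a $\B$-valued H\"ormander--Mikhlin multiplier theorem for the Hankel transform — a symbol $m=m(y)$ with $\sup_{y>0}|y^k m^{(k)}(y)|<\infty$, $k=0,1,\dots,N$, yields a bounded operator $h_\lambda\!\bigl(m\,h_\lambda(\cdot)\bigr)$ on $L^p((0,\infty),\B)$ with norm controlled by those constants — and apply it to $m(y)=F(y^2)$ with $F\in H^\infty(\Sigma_\omega)$. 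This vector-valued Hankel multiplier theorem is the technical heart: one derives Calder\'on--Zygmund kernel estimates for the Hankel multiplier operator and compares it with the associated Euclidean Fourier multiplier operator on $L^p(\R,\B)$, splitting the Bessel kernel into a ``local'' piece comparable to the Euclidean one plus a ``global'' piece with fast decay dominated by the Hardy--Littlewood maximal operator; the UMD hypothesis enters precisely at the Euclidean Mikhlin step, i.e.\ through the $L^p(\R,\B)$-boundedness of the Hilbert transform. (Alternatively, one could avoid the functional calculus and treat $G_\B^{\lambda,\beta}$ directly as a $\gamma(H,\B)$-valued singular integral: the $L^2$-bound comes from Hankel--Plancherel and $\int_0^\infty|\psi(s)|^2\,ds/s<\infty$, the kernel $(x,y)\mapsto[t\mapsto t^\beta\partial_t^\beta P_t^\lambda(x,y)]$ satisfies $H$-valued size and H\"ormander conditions proved by comparison with the classical Poisson kernel, and a vector-valued Calder\'on--Zygmund extrapolation then gives the $L^p$ range; but this route still needs an independent $L^p(\B)$-type endpoint input, so it meets the functional calculus route at exactly the same place where UMD is indispensable.)
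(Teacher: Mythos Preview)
First, note that the paper does not itself prove Theorem~B: it is quoted from \cite[Theorem~1.2]{BCR3}. Your proposal should therefore be compared with the argument in that companion paper, and the present paper gives enough indirect evidence (see the references to \cite[Section~3.1]{BCR3} and \cite[Theorem~4.2]{KaWe} in the proof of Lemma~\ref{Lemap19}, and to \cite[Lemma~3.1]{BCR3} in the proof of Lemma~\ref{Lemap14}) to reconstruct its shape.

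Your identification $t^\beta\partial_t^\beta P_t^\lambda = e^{i\pi\beta}\psi(t\sqrt{\Delta_\lambda})$ with $\psi(z)=z^\beta e^{-z}$ is correct (this is precisely \cite[Lemma~3.1]{BCR3}), and the Kalton--Weis route you outline is a legitimate way to obtain the two-sided estimate once a bounded $H^\infty$-calculus for $\Delta_\lambda$ on $L^p((0,\infty),\B)$ is in hand. You also locate the crux correctly: everything rests on the vector-valued Hankel multiplier theorem you would have to establish to feed into Kalton--Weis.

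The route in \cite{BCR3} is organized oppositely. There Theorem~B is proved \emph{first}, directly, and spectral multiplier results are deduced from the square functions afterwards (cf.\ \cite[Proposition~5.1]{BCR3} as used in Section~\ref{sec:Proof3} here). Concretely, one invokes \cite[Theorem~4.2]{KaWe} to obtain the $L^p(\R,\B)\to L^p(\R,\gamma(H,\B))$ bound for the classical operator $G_\B^\beta$, and then transfers to $G_\B^{\lambda,\beta}$ by explicit $H$-valued Calder\'on--Zygmund estimates on the kernel difference $K_\lambda^\beta(\cdot;x,y)-K^\beta(\cdot;x,y)$; the UMD hypothesis enters only through Kaiser--Weis. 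Your proposal reverses the logical order (multipliers first, square functions second), and the hard analysis---the local comparison of Bessel and Euclidean kernels---migrates from the square function to the multiplier theorem. Neither route is shorter; yours is tidier conceptually, but be aware that the Hankel multiplier result you want is in \cite{BCR3} a \emph{consequence} of Theorem~B, so you cannot simply cite it and must carry out the kernel work yourself.

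One correction to your parenthetical alternative: the sentence ``the $L^2$-bound comes from Hankel--Plancherel'' is only true in the scalar case. For the $\gamma(H,\B)$-valued operator, vector-valued Calder\'on--Zygmund extrapolation needs an $L^q((0,\infty),\B)\to L^q((0,\infty),\gamma(H,\B))$ anchor for some $q$, and Plancherel does not furnish that when $\B\neq\C$. You do flag this (``still needs an independent $L^p(\B)$-type endpoint input''), and that missing endpoint is exactly the Kaiser--Weis bound for $G_\B^\beta$---so your alternative collapses to the \cite{BCR3} argument.
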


Note that the Bessel Poisson semigroup $\{P_t^\lambda\}_{t>0}$ is not a diffusion semigroup because it is not
conservative. Then, Theorem~\ref{Th:B} is not included in \cite[Theorem 1.6]{Hy}.

Our objective in this paper is to study the operator $G_\B^{\lambda,\beta}$ in Hardy and BMO spaces adapted to
the Bessel setting.

Fridli \cite{Fri}, in his study about the local Hilbert transform considered the Hardy type space $H^1_{\rm o} (0,\infty )$
which consists on all those functions $f\in L^1(0,\infty)$ such that the odd extension function
$f_{\rm o}$ of $f$ to $\R$ is in the classical Hardy space $H^1(\R)$. In \cite[Theorem 2.1]{Fri}
the space $H^1_{\rm o}(0,\infty )$ was characterized by using the local Hilbert transform.

The maximal operator $P_*^\lambda$ associated with the semigroup $\{P_t^\lambda\}_{t>0}$ is defined by
$$P_*^\lambda(f)
    = \sup_{t>0} |P_t^\lambda(f)|, \quad f \in L^p(0,\infty), \ 1 \leq p \leq \infty.$$
The space $H^1_{\rm o}(0,\infty )$ can be described by $P_*^\lambda$ as follows (see \cite[Theorem 1.10]{BDT}).

\begin{ThA}\label{Th:C}
    Let $\lambda>0$ and $f \in L^1(0,\infty)$. Then, $f \in H_{\rm o}^1(0,\infty )$ if, and only if,
    $P_*^\lambda(f) \in L^1(0,\infty)$.
\end{ThA}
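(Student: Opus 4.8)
The plan is to prove both implications via the identification of $H^1_{\rm o}(0,\infty)$ with the odd subspace of the classical Hardy space $H^1(\R)$, comparing the Bessel--Poisson semigroup $\{P_t^\lambda\}_{t>0}$ with the classical Poisson semigroup $\{P_t\}_{t>0}$ on $\R$ applied to odd functions. The classical fact we want to lean on is that, for $g \in L^1(\R)$, one has $g \in H^1(\R)$ if and only if the maximal function $P_*(g) = \sup_{t>0}|P_t(g)|$ belongs to $L^1(\R)$; by parity, if $g = f_{\rm o}$ is the odd extension of $f \in L^1(0,\infty)$, then $P_t(g)$ is odd in $x$, so $P_*(g) \in L^1(\R)$ if and only if $P_*(g)\big|_{(0,\infty)} \in L^1(0,\infty)$. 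Hence the statement reduces to showing that, for $f \in L^1(0,\infty)$,
\begin{equation*}
    P_*^\lambda(f) \in L^1(0,\infty) \quad \Longleftrightarrow \quad P_*\big(f_{\rm o}\big)\big|_{(0,\infty)} \in L^1(0,\infty).
\end{equation*}

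The heart of the argument is therefore a pointwise kernel comparison between $P_t^\lambda(x,y)$ and the restriction to $x,y>0$ of the odd part of the classical Poisson kernel, namely $Q_t(x,y) := P_t(x-y) - P_t(x+y) = \frac{t}{\pi}\big(\frac{1}{(x-y)^2+t^2} - \frac{1}{(x+y)^2+t^2}\big)$. First I would analyze $P_t^\lambda(x,y)$ on the ``local'' region $|x-y| \le x/2$ (equivalently $y \sim x$), where the $\theta$-integral is dominated by the contribution near $\theta = 0$; a standard estimate (splitting $\int_0^\pi$ at $\theta$ comparable to $|x-y|/\sqrt{xy}$, or just at a fixed scale) should give $P_t^\lambda(x,y) \approx \frac{t}{((x-y)^2 + t^2)^{3/2}}\cdot(\text{something bounded})$ on the diagonal, or more precisely that $P_t^\lambda(x,y)$ and $Q_t(x,y)$ differ by a kernel whose associated maximal operator is bounded on $L^1(0,\infty)$ (typically of the form $\int_0^\infty \frac{t}{(x+y)^2+t^2}\,\frac{dy}{x+y}$-type ``Hardy operator'' bounds, or a bound by $\min\{x,y\}/\max\{x,y\}$ type factors times $1/\max\{x,y\}$, which integrate). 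On the ``global'' region $|x-y| > x/2$, both kernels are small and smooth and one estimates $|P_t^\lambda(x,y)| + |Q_t(x,y)| \lesssim \frac{t}{(x+y)^2+t^2}\,\frac{1}{x+y}$ (or similar), whose maximal operator in $t$ is dominated by an operator bounded on $L^1(0,\infty)$. Combining, $|P_*^\lambda(f) - P_*(f_{\rm o})| \le C\,\mathcal{R}(|f|)$ for an operator $\mathcal{R}$ bounded on $L^1(0,\infty)$, which yields the equivalence of the two $L^1$ conditions in both directions.

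An alternative, possibly cleaner route is to invoke Theorem~\ref{Th:C}'s ancestor directly as a black box if it has already been proved in \cite{BDT} in a form stronger than stated; but taking the excerpt at face value, the self-contained path above via the kernel comparison with $Q_t$ is the one I would pursue. The main obstacle I anticipate is the uniform-in-$t$ control of the difference $P_t^\lambda(x,y) - Q_t(x,y)$ with a $t$-free majorant whose maximal operator is $L^1$-bounded: near the diagonal one must be careful that the subtraction of the two ``large'' reflected singularities leaves only an integrable error, and this requires a genuinely quantitative expansion of the $\theta$-integral (not merely the two-sided $\approx$ bounds), handling separately the regimes $t \lesssim x$ and $t \gtrsim x$ and the interplay with $|x-y|$. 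Once that estimate is in hand, the passage between $(0,\infty)$ and $\R$ via odd extension and the classical Fefferman--Stein / Burkholder--Gundy--Silverstein maximal characterization of $H^1(\R)$ closes the proof without further difficulty.
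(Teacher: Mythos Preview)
The paper does not actually prove Theorem~\ref{Th:C}; it is quoted from \cite[Theorem~1.10]{BDT} and used as input. So there is no in-paper proof to compare your proposal against. Your outline --- reducing to the classical maximal characterization of $H^1(\R)$ via the odd extension $f_{\rm o}$, and then controlling $\sup_{t>0}|P_t^\lambda(x,y)-Q_t(x,y)|$ by a $t$-free kernel whose associated integral operator is bounded on $L^1(0,\infty)$ --- is the natural strategy and is essentially the method used in \cite{BDT} (and echoed in the paper's Proposition~\ref{Prop1}, which is attributed to the same source together with \cite{Fri}).

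One point deserves care. On the global region your displayed majorant $\frac{t}{(x+y)^2+t^2}\cdot\frac{1}{x+y}$ is too crude: its supremum in $t$ is comparable to $(x+y)^{-2}$, and $\int_0^\infty\!\int_0^\infty (x+y)^{-2}|f(y)|\,dy\,dx=\int_0^\infty y^{-1}|f(y)|\,dy$, which is not $L^1$-bounded. What rescues the argument is using the genuine decay of each kernel separately: on $\{y>2x\}$ one has $P_t^\lambda(x,y)\le C\,(xy)^\lambda/(t+|x-y|)^{2\lambda+1}\le C\,x^\lambda y^{-\lambda-1}$ (this is exactly the estimate \eqref{H2} of the paper), while the cancellation in $Q_t$ gives $|Q_t(x,y)|\le C\,x/y^2$; both majorants integrate to $C\|f\|_{L^1}$ after Fubini, and the region $\{y<x/2\}$ is symmetric. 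On the local region $x/2<y<2x$ the needed bound $\sup_t|P_t^\lambda(x,y)-P_t(x-y)|\le C/x$ (hence also for $Q_t$, since $\sup_t P_t(x+y)\le C/x$ there) is precisely the kind of estimate the paper records in \eqref{H26A} via \cite[Section~3.1]{BCR3}. With these sharper bounds in place your plan goes through; just be aware that the looser global estimate you wrote would not.
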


Note that, according to Theorem~\ref{Th:C}, the Hardy space defined by the Bessel Poisson semigroup $\{P_t^\lambda\}_{t>0}$
actually does not depend on $\lambda$.

If $\B$ is a Banach space, the $\B$-valued Hardy space $H_{\rm o}^1((0,\infty ),\B)$ is defined as in the scalar case. By considering
the maximal operator $P_*^\lambda$ on $L^p((0,\infty),\B)$, $1 \leq p < \infty$, as follows
$$P_*^\lambda(f)
    = \sup_{t>0} \|P_t^\lambda(f)\|_\B,$$
we introduce the space $H^1(\Delta _\lambda , \mathbb{B})$ constituted by all those functions $f \in L^1((0,\infty),\B )$ such that $P_*^\lambda(f) \in L^1(0,\infty)$.
As in Theorem~\ref{Th:C}, a function  $f\in L^1((0,\infty),\B )$ is in $H_{\rm o}^1((0,\infty ),\B)$ if, and only if, $f\in H^1(\Delta _\lambda , \mathbb{B})$ (see Proposition \ref{Prop1}).

By $BMO(\R)$ we denote as usual the space of all bounded mean oscillation functions on $\R$.
In \cite{BCFR1} the space $BMO_{\rm o}(0,\infty)$ was introduced. A complex measurable function $f$ on $(0,\infty)$ is said
to be in $BMO_{\rm o}(0,\infty )$ when there exists $C>0$ such that:
\begin{itemize}
    \item[$(Bi)$] $\displaystyle \frac{1}{|I|} \int_I |f(x)| dx \leq C$, for every $I=(0,r)$, $r>0$,\\
    \item[$(Bii)$] $\displaystyle \frac{1}{|I|} \int_I |f(x)-f_I| dx \leq C$, for each $I=(r,s)$, $0<r<s<\infty $.
\end{itemize}
Here, |I| denotes the length of $I$ and $\displaystyle f_I=\frac{1}{|I|} \int_I f(y)dy$.

Condition $(Bii)$ says that $f \in BMO(0,\infty)$. It is not hard to see that $f \in BMO_{\rm o}(0,\infty )$ if,
and only if, the odd extension $f_{\rm o}$ of $f$ to $\R$ is in $BMO(\R)$. As in the classical case, the dual
space of $H^1_{\rm o}(0,\infty )$ can be identified with the space $BMO_{\rm o}(0,\infty )$. In \cite{BCFR2} harmonic analysis operators
(maximal operators, $g$-functions, Riesz transforms, \dots) in the Bessel setting were studied on $BMO_{\rm o}(0,\infty )$
and in \cite{BCFR1}, $BMO_{\rm o}(0,\infty )$ was described by using Carleson measures involving Bessel Poisson semigroups.

If $\B$ is a Banach space, the Banach valued function space $BMO_{\rm o}((0,\infty ),\B)$ is defined as in the scalar case, replacing the modulus by the norm $\| \cdot \|_\B$ in $\B$. The dual space of $H^1_{\rm o}((0,\infty ),\B)$ can be identified
with $BMO_{\rm o}((0,\infty ),\B^*)$, provided that $\B$ has the UMD property.

Our first result is the following.

\begin{Th}\label{Th:1}
    Let $\lambda \geq 1$ and $\beta>0$. Suppose that $\B$ is a UMD Banach space. Then, there exists $C>0$
    such that
    $$\frac{1}{C} \|f\|_{E((0,\infty ),\B)}
        \leq \| G_\B^{\lambda,\beta}(f)\|_{E((0,\infty ),\gamma(H,\B))}
        \leq C \|f\|_{E((0,\infty ),\B)}, \quad f \in E((0,\infty ),\B),$$
    where $E$ denotes $H^1_{\rm o}$ or $BMO_{\rm o}$.
\end{Th}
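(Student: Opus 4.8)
The plan is to obtain Theorem~\ref{Th:1} by transferring the already-known $L^p$-result (Theorem~\ref{Th:B}) together with the Hardy/BMO characterizations of the Bessel setting (Theorems~\ref{Th:C}, and the Carleson-measure description from \cite{BCFR1}), via the odd-extension principle that identifies $H^1_{\rm o}((0,\infty),\B)$ and $BMO_{\rm o}((0,\infty),\B)$ with subspaces of $H^1(\R,\B)$ and $BMO(\R,\B)$. There are two natural routes, and I would carry out both to handle the two endpoint spaces separately. First, for $E = H^1_{\rm o}$, I would use the atomic decomposition: it suffices to prove $\|G_\B^{\lambda,\beta}(a)\|_{H^1((0,\infty),\gamma(H,\B))} \le C$ uniformly over $H^1_{\rm o}((0,\infty),\B)$-atoms $a$, and conversely a square-function lower bound via a duality/area-integral argument. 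The upper estimate for atoms reduces, after using the explicit kernel $P_t^\lambda(x,y)$ above, to verifying that $t^\beta\partial_t^\beta P_t^\lambda$ satisfies Calderón--Zygmund-type size and smoothness bounds in the space variables \emph{with values in} $\gamma(H,\B)$; here one writes $\|t^\beta\partial_t^\beta P_t^\lambda(x,\cdot)\|_{\gamma(H,\B)}$ and exploits that scalar kernel bounds plus the ideal property of $\gamma$-norms give the required $\gamma(H,\gamma(H,\B)) = \gamma(H,\B)$ estimates (using that $\B$, being UMD, does not contain $c_0$, so the $\gamma$-function-space identifications in the excerpt apply).

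Second, for $E = BMO_{\rm o}$, I would prove the Carleson-measure equivalence: $f \in BMO_{\rm o}((0,\infty),\B)$ if and only if $\|G_\B^{\lambda,\beta}(f)(t,x)\|_{\gamma(H,\B)}^2\,\frac{dt\,dx}{t}$ — more precisely the localized $L^2$-averages of $G_\B^{\lambda,\beta}(f)$ over Carleson boxes — is a Carleson measure, with the Carleson norm comparable to $\|f\|_{BMO_{\rm o}}$. The implication "BMO $\Rightarrow$ Carleson" splits $f = f\chi_{2I} + f\chi_{(2I)^c}$ over a box based at an interval $I$; the local part is controlled by Theorem~\ref{Th:B} with $p=2$ applied to $f - f_I$ (when $I$ is away from the origin) or to $f$ directly (when $I$ touches $0$, using $(Bi)$), and the far part is handled by the kernel estimates together with the logarithmic growth $\int_{(2I)^c}\|f(y)-f_I\|_\B\,\frac{dy}{|x-y|+\dots} \lesssim \|f\|_{BMO}$. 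The converse "Carleson $\Rightarrow$ BMO" uses a reproducing (Calderón-type) formula $f = c_\beta\int_0^\infty t^\beta\partial_t^\beta P_t^\lambda \,(\text{adjoint})\,\frac{dt}{t}$ together with the standard $H^1$--BMO pairing, i.e. testing against $H^1_{\rm o}$-atoms and invoking the atomic bound from the first part, so the two endpoint arguments dovetail.

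The restriction $\lambda \ge 1$ is what makes the far-away kernel estimates clean: for $\lambda \ge 1$ the Bessel Poisson kernel $P_t^\lambda(x,y)$ and its $t$-derivatives enjoy global Gaussian/Poisson-type bounds with the right homogeneity, whereas for $0<\lambda<1$ extra boundary terms near $x,y\to 0$ spoil the naive estimates; so in the proof $\lambda\ge1$ is invoked precisely when bounding $\partial_t^\beta P_t^\lambda(x,y)$ and its $y$-derivative away from the diagonal and near the origin.

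The main obstacle I expect is the handling of the \emph{fractional} derivative $\partial_t^\beta$ in conjunction with the $\gamma$-radonifying norm: one must show that $G_\B^{\lambda,\beta}$ maps atoms into $\gamma(H,\B)$-valued Calderón--Zygmund kernels, which requires differentiating under the Segovia--Wheeden integral $\partial_t^\beta F(t,x) = \frac{e^{-i\pi(m-\beta)}}{\Gamma(m-\beta)}\int_0^\infty \partial_t^m F(t+s,x)\,s^{m-\beta-1}\,ds$ and controlling the resulting double integral in $s$ and $\theta$ (from the kernel's $\theta$-integral) simultaneously, uniformly in a way compatible with the $L^2(dt/t)$-norm defining $H = L^2((0,\infty),dt/t)$ and hence the $\gamma$-norm. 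Establishing these $\gamma(H,\B)$-valued size and Hölder estimates — effectively a vector-valued, fractional-order refinement of the kernel analysis already done in \cite{BCR3} for the $L^p$ case — is the technical heart of the argument; once they are in place, the Hardy-space atomic bound and the BMO Carleson-measure characterization follow by now-standard machinery, exactly parallel to the Euclidean statement in Theorem~\ref{Th:A}.
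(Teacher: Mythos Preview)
Your overall architecture (atomic decomposition plus Calder\'on--Zygmund kernel estimates for the $H^1_{\rm o}$ upper bound, duality and a reproducing formula for the lower bounds) matches the paper. But two points are genuine gaps.

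\textbf{The BMO upper bound is misdirected.} The statement to be proved is that the $\gamma(H,\B)$-valued function $x\mapsto G_\B^{\lambda,\beta}(f)(\cdot,x)$ lies in $BMO_{\rm o}((0,\infty),\gamma(H,\B))$; this is \emph{not} a Carleson-measure characterization of $f$. Your proposed equivalence ``$f\in BMO_{\rm o}(\B)$ iff $\|G_\B^{\lambda,\beta}(f)(t,x)\|_{\gamma(H,\B)}^2\,dt\,dx/t$ is Carleson'' does not type-check: $G_\B^{\lambda,\beta}(f)(t,x)\in\B$ for fixed $(t,x)$, while the $\gamma(H,\B)$-norm already absorbs the $t$-variable, so there is no measure on $(0,\infty)^2$ left. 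The scalar Carleson characterization $\|t^\beta\partial_t^\beta P_t^\lambda f(x)\|_\B^2\,dt\,dx/t$ only works when $\B$ is Hilbert (this is exactly the content of Kwapie\'n's theorem cited in the introduction). The paper instead verifies the defining conditions $(Bi)$ and $(Bii)$ for $G_\B^{\lambda,\beta}(f)$ directly: for $(Bi)$ one splits $f=f\chi_{(0,2r)}+f\chi_{(2r,\infty)}$ and uses Theorem~\ref{Th:B} on the near part and the pointwise kernel bound $\|K_\lambda^\beta(\cdot;x,y)\|_H\le C(xy)^\lambda/|x-y|^{2\lambda+1}$ on the far part; for $(Bii)$ the key device is to compare $G_\B^{\lambda,\beta}(f)$ with the classical $G_\B^\beta(f_{\rm o})$ built from the Euclidean Poisson kernel, showing their difference is in $L^\infty((0,\infty),\gamma(H,\B))$, and then running the standard BMO argument for $G_\B^\beta$ on $\R$.

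\textbf{The $H^1_{\rm o}$ upper bound is incomplete.} Showing $\|G_\B^{\lambda,\beta}(a)\|_{L^1((0,\infty),\gamma(H,\B))}\le C$ uniformly over atoms only places the image in $L^1$, not in $H^1_{\rm o}((0,\infty),\gamma(H,\B))$. By Proposition~\ref{Prop1} (the equivalence $H^1_{\rm o}=H^1(\Delta_\lambda)$), you must additionally show that $P_*^\lambda\bigl(G_\B^{\lambda,\beta}(f)\bigr)\in L^1(0,\infty)$. The paper does this as a separate lemma: one observes $P_s^\lambda(G_\B^{\lambda,\beta}(f))(t,x)=t^\beta\partial_t^\beta P_{t+s}^\lambda(f)(x)$, proves that the kernel $K_\lambda^\beta(t,s;x,y)=t^\beta\partial_t^\beta P_{t+s}^\lambda(x,y)$ satisfies Calder\'on--Zygmund bounds in the $L^\infty_s(H_t)$-norm, and then treats the composed operator $P_*^\lambda\circ G_\B^{\lambda,\beta}$ as a vector-valued singular integral. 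Also note that $H^1_{\rm o}$-atoms include the non-cancellative type $(Ai)$, $a=\frac{b}{\delta}\chi_{(0,\delta)}$; these require the pointwise kernel decay $\|K_\lambda^\beta(\cdot;x,y)\|_H\le C(xy)^\lambda/|x-y|^{2\lambda+1}$ for $x\ge 2\delta$, not just the standard Calder\'on--Zygmund size $C/|x-y|$.

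Your identification of where $\lambda\ge 1$ enters is correct in spirit: it is used precisely in bounding the term $I_2$ coming from $\partial_x$ hitting the factor $x^\lambda$ in the kernel (the paper's estimate \eqref{I2}), which requires $x^{\lambda-1}y^\lambda\le (xy)^{\lambda-1}y$ to close.
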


The Bessel operator $\Delta_\lambda$ can be written as $\Delta_\lambda=D_\lambda^* D_\lambda$, where
$D_\lambda=x^\lambda \frac{d}{dx} x^{-\lambda}$ and $D_\lambda^*=-x^{-\lambda} \frac{d}{dx} x^{\lambda}$
is the adjoint operator of $D_\lambda$. $D_\lambda$ is used to define the Riesz transform in the Bessel setting
(see \cite{BBFMT}). We consider the operator $\mathcal{G}_\B^\lambda$ acting on $\B$-valued functions by
$$\mathcal{G}_\B^\lambda(f)(t,x)
    = t D_\lambda^* P_t^{\lambda+1}(f)(x), \quad t,x \in (0,\infty).$$
This operator will be useful to get characterizations of UMD Banach spaces.

\begin{Th}\label{Th:2}
    Let $\lambda \geq 1$. Assume that $\B$ is a UMD Banach space. Then, the operator $\mathcal{G}_\B^\lambda$
    is bounded from $H^1_{\rm o}((0,\infty ),\B)$ into $H^1_{\rm o}((0,\infty ),\gamma(H,\B))$ and from $BMO_{\rm o}((0,\infty ),\B)$ into
    $BMO_{\rm o}((0,\infty ),\gamma(H,\B))$.
\end{Th}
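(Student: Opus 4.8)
The plan is to derive Theorem~\ref{Th:2} from Theorem~\ref{Th:1} by relating the operator $\mathcal{G}_\B^\lambda$ to a suitable fractional-derivative square function $G_\B^{\lambda+1,\beta}$ via a bounded intertwining identity. The starting point is the first-order relation between the Bessel Poisson kernels for consecutive parameters: since $D_\lambda=x^\lambda\frac{d}{dx}x^{-\lambda}$ and $\Delta_\lambda=D_\lambda^*D_\lambda$, one has on the Hankel transform side that $D_\lambda$ intertwines $\Delta_\lambda$ and $\Delta_{\lambda+1}$ (this is standard in the Bessel Riesz transform literature, see \cite{BBFMT}); concretely $\partial_t P_t^{\lambda+1}=-\sqrt{\Delta_{\lambda+1}}P_t^{\lambda+1}$ and $D_\lambda^*\sqrt{\Delta_{\lambda+1}}=\sqrt{\Delta_\lambda}\,D_\lambda^*$ in an appropriate sense. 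First I would make this precise on a dense class (e.g. $S_\lambda(0,\infty)\otimes\B$) to obtain an identity of the form
$$t\,D_\lambda^*P_t^{\lambda+1}(f)(x)=-t\,\partial_t P_t^\lambda\big(\tfrac{1}{\sqrt{\Delta_\lambda}}D_\lambda^*P_0^{\lambda+1}f\big)(x),$$
or, more usefully at the level of square-function operators, to show that $\mathcal{G}_\B^\lambda(f)=G_\B^{\lambda,1}(R_\lambda f)$ where $R_\lambda$ is (up to constants) a Riesz-type transform in the Bessel setting.

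The second step is then to invoke two facts: (i) Theorem~\ref{Th:1} with $\beta=1$ gives that $G_\B^{\lambda,1}$ is an isomorphism (in particular bounded) from $E((0,\infty),\B)$ into $E((0,\infty),\gamma(H,\B))$ for $E=H^1_{\rm o},BMO_{\rm o}$; and (ii) the relevant Bessel Riesz transform $R_\lambda$ is bounded on $H^1_{\rm o}((0,\infty),\B)$ and on $BMO_{\rm o}((0,\infty),\B)$ when $\B$ is UMD — this boundedness is exactly the kind of result proved in \cite{BBFMT} and \cite{BCFR2} for Bessel harmonic analysis operators on these spaces (and UMD is needed precisely because Riesz transforms, unlike Poisson averages, are not positive operators). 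Composing, $\mathcal{G}_\B^\lambda=G_\B^{\lambda,1}\circ R_\lambda$ is bounded from $E((0,\infty),\B)$ into $E((0,\infty),\gamma(H,\B))$, which is the assertion. If a clean factorization through a known Riesz transform is not available, the alternative is to prove directly the kernel estimates (size and smoothness, plus the cancellation/Carleson conditions) needed to run the $H^1_{\rm o}\to H^1_{\rm o}$ atomic argument and the $BMO_{\rm o}\to BMO_{\rm o}$ duality/Carleson-measure argument for the $\gamma(H,\B)$-valued kernel $(t,x)\mapsto t D_\lambda^*P_t^{\lambda+1}(x,\cdot)$, mirroring the techniques used to establish Theorem~\ref{Th:1}.

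The main obstacle I expect is controlling the action of $D_\lambda^*=-x^{-\lambda}\frac{d}{dx}x^{\lambda}$ near the origin: differentiating $x^\lambda P_t^{\lambda+1}(x,y)$ and dividing by $x^\lambda$ can worsen the behavior of the kernel for small $x$, and one must check that the resulting $\gamma(H,\B)$-valued kernel still satisfies the size bound $\|K(x,y)\|_{\gamma(H,\B)}\lesssim 1/|x-y|$ and the Hörmander-type smoothness in $y$, uniformly including the region $x\approx 0$ which is exactly where $H^1_{\rm o}$ and $BMO_{\rm o}$ differ from $H^1(\R)$ and $BMO(\R)$ (hence the hypothesis $\lambda\ge 1$). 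A secondary technical point is justifying the intertwining identity and the passage from the dense class to all of $H^1_{\rm o}$ and $BMO_{\rm o}$, which requires the appropriate convergence lemmas for $\gamma$-radonifying-valued integrals already used in the proof of Theorem~\ref{Th:1}.
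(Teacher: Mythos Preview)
Your primary strategy---factor $\mathcal{G}_\B^\lambda$ as $G_\B^{\lambda,1}$ composed with a Bessel Riesz transform and then invoke Theorem~\ref{Th:1}---is \emph{not} the route the paper takes, though the underlying identity is correct (with $R_\lambda^*$ rather than $R_\lambda$; see~\eqref{H42A} in Section~\ref{sec:Proof3}). The paper instead follows your ``alternative'': it proves directly that the kernel $M^\lambda(t;x,y)=tD_{\lambda,x}^*P_t^{\lambda+1}(x,y)$ is a $(\B,\gamma(H,\B))$-Calder\'on--Zygmund kernel (Lemma~\ref{Lemp25}), handles the special atoms $b\chi_{(0,\delta)}/\delta$ via the pointwise bounds \eqref{H37}--\eqref{H38}, and runs a separate comparison with the classical operator $\mathcal{G}_\B(f_{\rm e})$ for the $BMO_{\rm o}$ part (Lemmas~\ref{Lemap29}--\ref{Lep30}). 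The paper reserves the factorization identity for the \emph{converse} direction in Theorem~\ref{Th:3}.

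The gap in your primary route is step~(ii): you assert that the vector-valued boundedness of $R_\lambda$ on $H^1_{\rm o}((0,\infty),\B)$ and $BMO_{\rm o}((0,\infty),\B)$ for UMD $\B$ is ``exactly the kind of result proved in \cite{BBFMT} and \cite{BCFR2}'', but neither reference contains this. \cite{BBFMT} and \cite{BFMT} give $L^p((0,\infty),\B)$-boundedness; \cite{BCFR2} treats only the scalar $BMO_{\rm o}$ case. The paper itself only establishes the \emph{scalar} $H^1_{\rm o}$ and $BMO_{\rm o}$ boundedness of $R_\lambda^*$ (Proposition~\ref{Riesz}), and even that requires nontrivial work: Calder\'on--Zygmund estimates plus the special-atom bound \eqref{H43}, and then a separate argument that $P_*^\lambda\circ R_\lambda^*$ maps $H^1_{\rm o}\to L^1$. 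A vector-valued version is plausible (the kernel is scalar, so UMD plus the same estimates should suffice), but it is a result you would have to \emph{prove}, not cite---and the amount of work is comparable to the paper's direct argument for $\mathcal{G}_\B^\lambda$. Your factorization approach would buy a cleaner conceptual picture if that Riesz-transform lemma were already available; absent it, the direct kernel approach is the honest one.
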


We now establish new characterizations of the UMD Banach spaces by using our square functions.

\begin{Th}\label{Th:3}
    Let $\B$ be a Banach space and $\lambda \geq 1$. The following assertions are equivalent.
    \begin{itemize}
        \item[$(i)$] $\B$ is UMD.

         \item[$(ii)$] There exists $C>0$ such that, for every $f \in E((0,\infty )) \otimes \B$,
        $$\|f\|_{E((0,\infty ),\B)}
            \leq C\| G_\B^{\lambda,1}(f)\|_{E((0,\infty ),\gamma(H,\B))}$$
        and
        $$\| \mathcal{G}_\B^{\lambda}(f)\|_{E((0,\infty ) ,\gamma(H,\B))}
            \leq C \|f\|_{E((0,\infty ),\B)},$$
        where $E$ denotes $H^1_{\rm o}$ or $BMO_{\rm o}$.

        \item[$(iii)$] For a certain (equivalently, for every) $\beta>0$ there exists $C>0$ such that
        $$\frac{1}{C} \|f\|_{E((0,\infty ),\B)}
            \leq \| G_\B^{\lambda,\delta}(f)\|_{E((0,\infty ),\gamma(H,\B))}
            \leq C \|f\|_{E((0,\infty ),\B)}, \quad f \in E((0,\infty ),\B),$$
        for $\delta=\beta$ and $\delta=\beta+1$, where $E$ represents $H^1_{\rm o}$ or $BMO_{\rm o}$.

    \end{itemize}
\end{Th}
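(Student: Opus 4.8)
The plan is to establish the cycle of implications $(i)\Rightarrow(iii)\Rightarrow(ii)\Rightarrow(i)$. The implication $(i)\Rightarrow(iii)$ is essentially Theorem~\ref{Th:1}: if $\B$ is UMD, then for every $\beta>0$ the two-sided equivalence for $G_\B^{\lambda,\delta}$ in $E((0,\infty),\B)$ holds with $\delta=\beta$, and applying this also with the exponent $\beta+1$ (again a positive real number) gives the full statement of $(iii)$ for any prescribed $\beta$; the phrase ``for a certain (equivalently, for every)'' is then justified once the cycle is closed. The implication $(iii)\Rightarrow(ii)$ is the easy direction: assume $(iii)$ holds for some $\beta>0$ with $\delta=\beta$ and $\delta=\beta+1$. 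Taking $\beta=1$ (or, if one wants an arbitrary $\beta$, using the independence of the chosen $\beta$ that will follow a posteriori), the lower bound $\|f\|_{E((0,\infty),\B)}\le C\|G_\B^{\lambda,1}(f)\|_{E((0,\infty),\gamma(H,\B))}$ is immediate. For the second inequality in $(ii)$ one must relate $\mathcal{G}_\B^\lambda(f)(t,x)=tD_\lambda^*P_t^{\lambda+1}(f)(x)$ to $G_\B^{\lambda,\delta}$; the key identity, coming from the factorization $\Delta_\lambda=D_\lambda^*D_\lambda$ together with $D_\lambda^*P_t^{\lambda+1}=-\partial_x$-type intertwining between the Bessel-Poisson kernels of orders $\lambda$ and $\lambda+1$, allows one to write $tD_\lambda^*P_t^{\lambda+1}$ as a bounded operator (uniformly in the $\gamma(H,\B)$-sense) applied to $t^{\beta}\partial_t^\beta P_t^\lambda$ of order one step higher, hence dominated by $\|G_\B^{\lambda,\beta+1}(f)\|_{E((0,\infty),\gamma(H,\B))}\le C\|f\|_{E((0,\infty),\B)}$ using the upper bound in $(iii)$ for $\delta=\beta+1$. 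This is precisely the content of Theorem~\ref{Th:2} when $\B$ is UMD, but here we only need the estimates as a formal consequence of $(iii)$.

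The real work is in $(ii)\Rightarrow(i)$. Here we are given a Banach space $\B$ satisfying the two families of inequalities in $(ii)$ for $E=H^1_{\rm o}$ or $E=BMO_{\rm o}$ (a priori for functions in $E((0,\infty))\otimes\B$), and we must conclude that $\B$ has the UMD property. The strategy is to build the Bessel-Riesz transform $R_\lambda$ (associated with $D_\lambda$, as in \cite{BBFMT}) as a composition of the two operators appearing in $(ii)$, in the spirit of the classical identity $R = (\text{adjoint of a }g\text{-operator})\circ(\text{a }g\text{-operator})$. Concretely, one expects a representation of the form $R_\lambda(f) = c\int_0^\infty (t\,\partial_t P_t^\lambda)\,(t\,D_\lambda P_t^\lambda)(f)\,\frac{dt}{t}$ (a Calderón-type reproducing formula), which shows that $R_\lambda = (G_\B^{\lambda,1})^* \circ \widetilde{\mathcal{G}}_\B^\lambda$ where $\widetilde{\mathcal{G}}$ is the companion square operator built from $D_\lambda$ rather than $D_\lambda^*$; combining the first inequality of $(ii)$ (which controls $\|f\|_E$ by the $\gamma$-norm of a $g$-operator, i.e.\ gives boundedness of the ``adjoint'' square operator) with the second inequality of $(ii)$ (boundedness of the direct square operator into $\gamma(H,\B)$) yields boundedness of $R_\lambda$ on $H^1_{\rm o}((0,\infty),\B)$ and on $BMO_{\rm o}((0,\infty),\B)$. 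Finally, one transfers this to the UMD property: the boundedness of the Bessel-Riesz transform on $\B$-valued $H^1_{\rm o}$ and $BMO_{\rm o}$ forces (by interpolation with the trivial $L^2$ bound, or by the known equivalence between UMD and boundedness of $R_\lambda$ on $L^p((0,\infty),\B)$ established in \cite{BBFMT}) that $\B$ is UMD; alternatively one reduces the local Hilbert transform on $\R$-valued $H^1$ to $R_\lambda$ via the odd-extension identifications $H^1_{\rm o}\leftrightarrow H^1(\R)$ and invokes Fridli's characterization together with the definition of UMD through the Hilbert transform.

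The main obstacle will be the $(ii)\Rightarrow(i)$ step, and within it the precise Calderón reproducing identity: one must check that the relevant integral $\int_0^\infty (t\partial_t P_t^\lambda)(t D_\lambda P_t^{\lambda})\,\frac{dt}{t}$ really reconstructs a constant multiple of the Riesz transform $R_\lambda$ (up to a harmless projection onto the orthogonal complement of the $\{P_t^\lambda\}$-fixed point space, which is trivial here), using the Hankel-transform side where $P_t^\lambda$ acts as multiplication by $e^{-ty}$ and $D_\lambda$ acts as an explicit first-order symbol; the subordination/Bessel-kernel identities relating orders $\lambda$ and $\lambda+1$ (needed to match $\mathcal{G}_\B^\lambda$, defined with $P_t^{\lambda+1}$, to the reproducing formula) are the delicate computational core. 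A secondary difficulty is the density/extension issue: $(ii)$ is assumed only on the tensor product $E((0,\infty))\otimes\B$, so one must argue that $E((0,\infty))\otimes\B$ is dense in $E((0,\infty),\B)$ in the appropriate sense (for $H^1_{\rm o}$) or weak-$*$ dense (for $BMO_{\rm o}$) to propagate the operator bounds, and handle the $BMO$ case with the usual care about defining $R_\lambda$ on $BMO$ modulo constants.
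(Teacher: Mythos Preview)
Your proposed cycle $(i)\Rightarrow(iii)\Rightarrow(ii)\Rightarrow(i)$ has two genuine gaps, and the paper in fact proves $(i)\Leftrightarrow(ii)$ and $(i)\Leftrightarrow(iii)$ separately rather than going through $(iii)\Rightarrow(ii)$.

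\textbf{The step $(iii)\Rightarrow(ii)$ does not work.} The second inequality in $(ii)$ asks for boundedness of $\mathcal{G}_\B^\lambda(f)=tD_\lambda^*P_t^{\lambda+1}(f)$, and you propose to dominate this by $\|G_\B^{\lambda,\beta+1}(f)\|_{E((0,\infty),\gamma(H,\B))}$ via some intertwining. But the only clean relation is the Bessel Cauchy--Riemann identity $\partial_tP_t^\lambda\circ R_\lambda^*=D_\lambda^*P_t^{\lambda+1}$, i.e.\ $\mathcal{G}_\B^\lambda(f)=G_\B^{\lambda,1}(R_\lambda^*f)$. So controlling $\mathcal{G}_\B^\lambda$ through the upper bound in $(iii)$ would first require $\|R_\lambda^*f\|_{E((0,\infty),\B)}\le C\|f\|_{E((0,\infty),\B)}$ in the $\B$-valued setting, which is equivalent to the UMD property you are trying to establish. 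There is no non-circular route from $(iii)$ to $(ii)$.

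\textbf{The step $(ii)\Rightarrow(i)$ is right in spirit but wrong in execution.} A lower bound $\|f\|_E\le C\|G_\B^{\lambda,1}(f)\|_{E(\gamma)}$ does \emph{not} yield boundedness of any adjoint, and your reproducing formula involves $\widetilde{\mathcal{G}}$ built from $D_\lambda$, whereas $(ii)$ only gives information about $\mathcal{G}_\B^\lambda$ built from $D_\lambda^*$. The paper's argument is direct: for $f\in L^\infty_c(0,\infty)\otimes\B$ one first checks that $R_\lambda^*f\in H^1_{\rm o}(0,\infty)\otimes\B$ (resp.\ $BMO_{\rm o}(0,\infty)\otimes\B$); this requires proving \emph{scalar} boundedness of $R_\lambda^*$ on $H^1_{\rm o}$ and $BMO_{\rm o}$, which is Proposition~\ref{Riesz} and is a substantial piece of work you do not mention. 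Once that is in hand, the first inequality of $(ii)$ applies to $R_\lambda^*f$ and the identity above chains the two hypotheses:
\[
\|R_\lambda^*f\|_{E((0,\infty),\B)}\le C\|G_\B^{\lambda,1}(R_\lambda^*f)\|_{E((0,\infty),\gamma(H,\B))}=C\|\mathcal{G}_\B^\lambda(f)\|_{E((0,\infty),\gamma(H,\B))}\le C\|f\|_{E((0,\infty),\B)}.
\]
Then \cite[Theorem~4.1]{MTX} transfers this to $L^p$-boundedness of $R_\lambda^*$, which characterizes UMD.

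\textbf{For $(iii)\Rightarrow(i)$} the paper uses the same pattern with imaginary powers: the identity
\[
G_\C^{\lambda,\beta}(\Delta_\lambda^{i\gamma}f)(\cdot,x)=-G_\C^{\lambda,\beta+1}(f)(\cdot,x)\circ T_{\gamma,\beta},
\]
where $T_{\gamma,\beta}$ is a fixed bounded operator on $H$, lets one chain the lower bound for $\delta=\beta$ with the upper bound for $\delta=\beta+1$ to obtain $\|\Delta_\lambda^{i\gamma}f\|_{E((0,\infty),\B)}\le C\|f\|_{E((0,\infty),\B)}$. Again one first needs scalar boundedness of $\Delta_\lambda^{i\gamma}$ on $H^1_{\rm o}$ and $BMO_{\rm o}$ (Proposition~\ref{imaginarypower}) so that $\Delta_\lambda^{i\gamma}f$ stays in $E(0,\infty)\otimes\B$ and $(iii)$ applies to it. This is exactly why $(iii)$ is stated with \emph{both} $\delta=\beta$ and $\delta=\beta+1$: the shift by one in the order of the fractional derivative matches the action of $\Delta_\lambda^{i\gamma}$ through $T_{\gamma,\beta}$.
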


In the following sections we present proofs of Theorems~\ref{Th:1}, \ref{Th:2} and \ref{Th:3}.

Throughout this paper by $c$ and $C$ we represent positive constants not necessarily the same in each
occurrence.

%\newpage
%%%%%%%%%%%%%%%%%%%%%%%%%%%%%%%%%%%%%%%%%%%%%%%%%%%%%%%%%%%%%%%%%%%%%%%%%%%%%%%%%%%%%%%%%%%%%%%%%%%%%%%%%%%%%%%%%%%%
\section{Proof of Theorem~\ref{Th:1}}\label{sec:Proof1}
%%%%%%%%%%%%%%%%%%%%%%%%%%%%%%%%%%%%%%%%%%%%%%%%%%%%%%%%%%%%%%%%%%%%%%%%%%%%%%%%%%%%%%%%%%%%%%%%%%%%%%%%%%%%%%%%%%%%

\subsection{}\label{subsec:2.1}

In this part we prove that the operator $G_\B^{\lambda,\beta}$ is bounded from  $H^1_{\rm o}((0,\infty ),\B)$ into
$H^1_{\rm o}((0,\infty ),\gamma(H,\B))$.

As in \cite{Fri} we say that a strongly measurable function $a : (0,\infty) \longrightarrow \B$ is a $q$-atom, where
$1<q \leq \infty$, when one of the following two conditions is satisfied:
\begin{itemize}
    \item[$(Ai)$] $a=\dfrac{b}{\delta} \chi_{(0,\delta)}$, where $\delta>0$ and $b \in \B$, being $\|b\|_\B=1$,
    \item[$(Aii)$] there exists an interval $I \subset (0,\infty)$ such that
    $\supp a \subset I$,  $\|a\|_{L^q((0,\infty),\B)} \leq |I|^{1/q-1}$ and $\int_I a(x)dx=0$.
\end{itemize}

The arguments presented in the proof of \cite[Proposition~3.7]{BDT} and \cite[Theorem 2.1]{Fri}
allow us to get the following.

\begin{Prop}\label{Prop1}
    Let $X$ be a Banach space, $\lambda>0$ and $1<q<\infty$. Suppose that $f : (0,\infty) \longrightarrow X$ is a
    strongly measurable function. The following assertions are
    equivalent:
    \begin{itemize}
        \item[$(i)$] $f \in H^1(\Delta_\lambda,X)$.
        \item[$(ii)$] $f\in H_{\rm o}^1((0,\infty ),X)$.
        \item[$(iii)$] For every $j \in \N$, there exist a $q$-atom $a_j$ and $\lambda_j \in \C$ such that
        $\sum_{j\in \N} |\lambda_j| < \infty$ and $f=\sum_{j\in \N}\lambda_j a_j$ in $L^1((0,\infty),X)$.
    \end{itemize}
    Moreover, in this case, we have that
    $$\|P_*^\lambda(f)\|_{L^1((0,\infty ),X)}
        \sim \|f\|_{H_{\rm o}^1((0,\infty ),X)}=\|f_o\|_{H^1(\R,X)}
        \sim \inf\sum_{j\in \N} |\lambda_j|,$$
    where the infimum is taken over all the complex sequences $\{\lambda_j\}_{j \in \N}$ such that
    $\sum_{j\in \N} |\lambda_j| < \infty$ and, for a certain sequence of $q$-atoms $\{a_j\}_{j \in \N}$,
    $f=\sum_{j\in \N}\lambda_j a_j$. Here $f_{\rm o}$ denotes the odd extesion to $\R$ of $f$.
\end{Prop}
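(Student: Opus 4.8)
The plan is to transfer everything to the classical real--variable Hardy space $H^1(\R,X)$ on the line via the odd--extension map, adapting to the $X$-valued setting the arguments of \cite[Proposition~3.7]{BDT} and \cite[Theorem~2.1]{Fri}. The first thing I would record is that no scalar valuedness is used there: since the Bessel--Poisson kernel $P_t^\lambda(x,y)$ is non-negative, for strongly measurable $f:(0,\infty)\to X$ one has $\|P_t^\lambda(f)(x)\|_X\le\int_0^\infty P_t^\lambda(x,y)\|f(y)\|_X\,dy$, and, more generally, any operator whose kernel is dominated in modulus by a non-negative $K(x,y)$ satisfies $\|\int_0^\infty K_t(x,y)f(y)\,dy\|_X\le\int_0^\infty K(x,y)\|f(y)\|_X\,dy$. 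Thus every $X$-valued maximal estimate below is reduced to its scalar counterpart applied to $\|f(\cdot)\|_X$.

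\textbf{Step 1 ($(i)\Leftrightarrow(ii)$ and the first equivalence).} I would write $\mathbb{P}_t(z)=\frac1\pi\frac{t}{t^2+z^2}$ and, for $x,y>0$, $\widetilde P_t(x,y)=\mathbb{P}_t(x-y)-\mathbb{P}_t(x+y)$, so that the classical Poisson integral on $\R$ of an odd $g$ satisfies $P_t(g)(x)=\int_0^\infty\widetilde P_t(x,y)g(y)\,dy$ for $x>0$; in particular $P_*(f_{\rm o})(x)=\sup_{t>0}\|\int_0^\infty\widetilde P_t(x,y)f(y)\,dy\|_X$ for $x>0$. The analytic heart of the matter --- the estimate carried out, for $\lambda>0$, in \cite[Proposition~3.7]{BDT} --- is the pointwise bound
$$\sup_{t>0}\big|P_t^\lambda(x,y)-\widetilde P_t(x,y)\big|\le C\,\Phi(x,y),\qquad x,y>0,\qquad\text{with}\quad\sup_{y>0}\int_0^\infty\Phi(x,y)\,dx\le C.$$
By Fubini this makes $g\mapsto\sup_{t>0}\|\int_0^\infty(P_t^\lambda-\widetilde P_t)(\cdot,y)g(y)\,dy\|_X$ bounded from $L^1((0,\infty),X)$ into $L^1(0,\infty)$, so using the triangle inequality in both directions I would obtain $\big|P_*^\lambda(f)(x)-P_*(f_{\rm o})(x)\big|\le C\int_0^\infty\Phi(x,y)\|f(y)\|_X\,dy$ for a.e.\ $x>0$, hence $\|P_*^\lambda(f)\|_{L^1(0,\infty)}\sim\|P_*(f_{\rm o})\|_{L^1(0,\infty)}=\tfrac12\|P_*(f_{\rm o})\|_{L^1(\R)}$. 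Since the Poisson maximal characterisation and the atomic decomposition of $H^1(\R,X)$ are available for an arbitrary Banach space $X$ (the Calder\'on--Zygmund/Fefferman--Stein machinery uses no lattice or special structure of $X$), this gives $P_*^\lambda(f)\in L^1(0,\infty)\Leftrightarrow f_{\rm o}\in H^1(\R,X)$, i.e.\ $(i)\Leftrightarrow(ii)$, together with $\|P_*^\lambda(f)\|_{L^1(0,\infty)}\sim\|f_{\rm o}\|_{H^1(\R,X)}=\|f\|_{H^1_{\rm o}((0,\infty),X)}$.

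\textbf{Step 2 ($(ii)\Leftrightarrow(iii)$ and the second equivalence).} For $(iii)\Rightarrow(ii)$ I would check that the odd extension of a $q$-atom is bounded in $H^1(\R,X)$: a type-$(Ai)$ atom $a=\frac b\delta\chi_{(0,\delta)}$ has $a_{\rm o}=\frac b\delta\,\mathrm{sgn}(\cdot)\chi_{(-\delta,\delta)}$, a fixed multiple of a classical $q$-atom; a type-$(Aii)$ atom with support $I\subset(0,\infty)$ has $a_{\rm o}=a\chi_I-\check a\,\chi_{-I}$ ($\check a(x)=a(-x)$), a sum of two classical $q$-atoms. So $\|a_{\rm o}\|_{H^1(\R,X)}\le C$, and from $f=\sum_j\lambda_j a_j$ in $L^1((0,\infty),X)$ it follows that $f_{\rm o}=\sum_j\lambda_j(a_j)_{\rm o}$ in $L^1(\R,X)$ with $\|f_{\rm o}\|_{H^1(\R,X)}\le C\sum_j|\lambda_j|$. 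For $(ii)\Rightarrow(iii)$ I would start from a classical decomposition $f_{\rm o}=\sum_k\mu_k A_k$ into $q$-atoms on $\R$ with $\sum_k|\mu_k|\le C\|f_{\rm o}\|_{H^1(\R,X)}$; since $f_{\rm o}$ is odd I may average over $x\mapsto-x$ and replace $A_k$ by $\frac12(A_k-\check A_k)$. If the interval of $A_k$ avoids $0$, the restriction of $\frac12(A_k-\check A_k)$ to $(0,\infty)$ is a multiple of a type-$(Aii)$ atom; if it contains $0$, that restriction is supported in some $(0,\delta)$, has $L^q$-norm $\le C\delta^{1/q-1}$ and integral $m$ with $\|m\|_X\le C$, so it splits as $\big(A\chi_{(0,\infty)}-\frac m\delta\chi_{(0,\delta)}\big)+\frac m\delta\chi_{(0,\delta)}$, a bounded multiple of a type-$(Aii)$ atom plus a bounded multiple of a type-$(Ai)$ atom. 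Tracking constants yields $f=\sum_j\lambda_j a_j$ in $L^1((0,\infty),X)$ with $\sum_j|\lambda_j|\le C\|f_{\rm o}\|_{H^1(\R,X)}$. Combining both directions gives the second norm equivalence in the statement.

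\textbf{Expected main obstacle.} The only genuinely non-formal step is the kernel comparison in Step 1: establishing $\sup_{t>0}|P_t^\lambda(x,y)-\widetilde P_t(x,y)|\le C\Phi(x,y)$ with $\Phi$ uniformly integrable in $x$ forces one to split the $\theta$-integral defining $P_t^\lambda(x,y)$ and handle separately the regions close to and far from the diagonal, and close to and far from the origin; this is where $\lambda>0$ enters, and it is exactly the computation in \cite[Proposition~3.7]{BDT}. The remaining points to verify are merely that the reduction to scalar estimates via positivity of $P_t^\lambda$ is legitimate (noted at the outset) and that in Step 2 the two atom types $(Ai)$ and $(Aii)$ are precisely what the symmetrisation of arbitrary $H^1(\R)$ atoms produces.
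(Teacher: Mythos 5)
Your proposal is correct and follows essentially the same route as the paper, which proves this proposition simply by invoking the arguments of \cite[Proposition~3.7]{BDT} and \cite[Theorem~2.1]{Fri}: comparison of $P_*^\lambda$ with the classical Poisson maximal function of the odd extension (reduced to the scalar case via positivity of the kernels), plus the symmetrisation of classical atoms into the two types $(Ai)$ and $(Aii)$. Your write-up just makes explicit the adaptations to the $X$-valued setting that the paper leaves implicit.
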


In the sequel we write $H_{\rm o}^1((0,\infty ),\B)$ to refer us $H^1(\Delta_\lambda,\B)$, $\lambda>0$, and we denote by $H^1((0,\infty),\B)$ the classical $\B$-valued Hardy space on $(0,\infty)$, that is, a function $f \in L^1((0,\infty),\B)$ is in
$H^1((0,\infty),\B)$ if, and only if, $f= \sum_{j\in \N}\lambda_j a_j$, where $\lambda_j \in \C$, $j \in \N$, being
$\sum_{j\in \N}|\lambda_j|<\infty$, and for some $q \in (1,\infty]$, $a_j$ satisfies the condition $(Aii)$, for every $j \in \N$.

Our objective is to see that, for every $f \in H_{\rm o}^1((0,\infty ),\B)$, $G_\B^{\lambda,\beta}(f) \in H_{\rm o}^1((0,\infty ),\gamma(H,\B))$,
and
$$\|G_\B^{\lambda,\beta}(f)\|_{H_{\rm o}^1((0,\infty ),\gamma(H,\B))}
    \leq C \|f\|_{H_{\rm o}^1((0,\infty ),\B)},
$$
where $C>0$ does not depend on $f$.

By taking into account  Proposition \ref{Prop1}, in order to show this, we can see that $G_\B^{\lambda,\beta}$ is bounded from $H^1_{\rm o}((0,\infty ),\B)$
into $L^{1}((0,\infty),\gamma(H,\B))$ (Lemma \ref{Lemap13}) and that $P_*^\lambda \circ G_\B^{\lambda,\beta}$ is bounded from $H^1_{\rm o}((0,\infty ),\B)$
into $L^{1}(0,\infty)$ (Lemma \ref{Lemap14}). We first need to establish the following boundedness property for $G_\B^{\lambda ,\beta}$.

\begin{Lem}\label{Lemap8}
Let $\B$ be a UMD Banach space, $\beta >0$ and $\lambda \geq 1$. The operator $G_\B^{\lambda,\beta}$ is bounded from $L^1((0,\infty),\B)$ into $L^{1,\infty}((0,\infty),\gamma(H,\B))$
and from $H^1((0,\infty),\B)$ into $L^{1}((0,\infty),\gamma(H,\B))$.
\end{Lem}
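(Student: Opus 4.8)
The plan is to prove both mapping properties by the standard Calderón--Zygmund machinery for vector-valued singular integrals whose kernels take values in an operator space, here $\gamma(H,\B)$ with $H=L^2((0,\infty),dt/t)$. First I would regard $G_\B^{\lambda,\beta}$ as an operator with kernel
$$
K^{\lambda,\beta}(x,y)(t) = t^\beta \partial_t^\beta P_t^\lambda(x,y), \quad t\in(0,\infty),
$$
so that $G_\B^{\lambda,\beta}(f)(x)$, as an element of $\gamma(H,\B)$, is represented by $\int_0^\infty K^{\lambda,\beta}(x,y)\,f(y)\,dy$ acting on scalars; here one uses the identification of weakly $L^2$ functions valued in $\B$ with operators in $\gamma(H,\B)$ recalled in the introduction. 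The $L^2$-boundedness of the scalar-analogous operator (equivalently, Theorem~\ref{Th:B} with $p=2$, which gives $\|G_\B^{\lambda,\beta}(f)\|_{L^2((0,\infty),\gamma(H,\B))}\le C\|f\|_{L^2((0,\infty),\B)}$ for UMD $\B$) furnishes the needed $L^2\to L^2$ bound to start the Calderón--Zygmund argument.

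The heart of the matter is the kernel estimate: I would show that, for the $\gamma(H,\B)\to\gamma(H,\B)$ operator norm (equivalently, for the $H$-norm $\|\cdot\|_H = (\int_0^\infty |\cdot|^2\,dt/t)^{1/2}$ of the scalar kernel),
$$
\|K^{\lambda,\beta}(x,y)\|_H \le \frac{C}{|x-y|}, \qquad
\|\partial_x K^{\lambda,\beta}(x,y)\|_H + \|\partial_y K^{\lambda,\beta}(x,y)\|_H \le \frac{C}{|x-y|^2},
$$
for $x\ne y$ in $(0,\infty)$. These are obtained from the explicit formula for $P_t^\lambda(x,y)$: differentiating $\beta$ times in $t$ via the Segovia--Wheeden fractional derivative formula and estimating the resulting integral over $\theta\in(0,\pi)$ together with the $s$-integral, one controls $|t^\beta\partial_t^\beta P_t^\lambda(x,y)|$ by a bound of the form $C\,t/((x-y)^2+t^2)$ times a harmless factor, after which integrating its square against $dt/t$ gives the $1/|x-y|$ bound; the gradient estimates are analogous with one extra power gained in $|x-y|$. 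The hypothesis $\lambda\ge 1$ enters here to keep the $(xy)^\lambda$ and $(\sin\theta)^{2\lambda-1}$ factors under control uniformly; I expect this to be where the restriction is genuinely used, and these kernel computations — though routine in spirit — will be the main technical obstacle. I would likely cite or adapt the corresponding estimates already worked out in \cite{BCR3} and \cite{BFMT} rather than redo them from scratch.

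With the $L^2$-boundedness and the Hörmander-type kernel conditions in hand, the weak-type $(1,1)$ bound $L^1((0,\infty),\B)\to L^{1,\infty}((0,\infty),\gamma(H,\B))$ follows from the vector-valued Calderón--Zygmund theorem (the target Banach space $\gamma(H,\B)$ causes no difficulty, since only the structure on the source side and the operator-norm kernel bounds are used). For the second assertion, I would use the atomic decomposition of $H^1((0,\infty),\B)$ into $q$-atoms satisfying $(Aii)$: by linearity and the $L^1$-convergence of the atomic series it suffices to bound $\|G_\B^{\lambda,\beta}(a)\|_{L^1((0,\infty),\gamma(H,\B))}\le C$ uniformly over such atoms $a$ supported in an interval $I$. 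On the doubled interval one uses Hölder's inequality together with the $L^2$-boundedness; off the doubled interval one uses the cancellation $\int_I a = 0$ and the gradient kernel estimate in the $y$-variable to get the decay $\int_{(0,\infty)\setminus 2I}\|G_\B^{\lambda,\beta}(a)(x)\|_{\gamma(H,\B)}\,dx \le C\int_I\|a(y)\|_\B\int_{|x-x_I|>2|I|}\|\partial_y K^{\lambda,\beta}(x,y)\|_H\,dx\,dy \le C$. Combining the two regions gives the desired uniform $L^1$ bound, completing the proof.
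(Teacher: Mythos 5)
Your proposal is correct and follows essentially the same route as the paper: the kernel bounds $\|K_\lambda^\beta(\cdot;x,y)\|_H\le C/|x-y|$ and $\|\partial_xK_\lambda^\beta(\cdot;x,y)\|_H+\|\partial_yK_\lambda^\beta(\cdot;x,y)\|_H\le C/|x-y|^2$ (where $\lambda\ge1$ is indeed what controls the term carrying $x^{\lambda-1}y^{\lambda}$ in the $x$-derivative), the $L^2$ input from Theorem~\ref{Th:B}, and the vector-valued Calder\'on--Zygmund theorem yielding both the weak $(1,1)$ and the $H^1\to L^1$ bounds. The one step you elide, which the paper treats at some length, is the verification that the abstract Calder\'on--Zygmund extension coincides a.e.\ with the pointwise-defined $G_\B^{\lambda,\beta}(g)(\cdot,x)$ for general $g\in L^1((0,\infty),\B)$; this is done there by approximating from $S_\lambda(0,\infty)\otimes\B$ and using the uniform truncated estimate $\|K_\lambda^\beta(\cdot;x,y)\|_{L^2((\eps,\infty),dt/t)}\le C/\eps$.
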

\begin{proof}
Let us define
$$
K_\lambda^\beta(t;x,y)
    = t^\beta \partial_t^\beta P_t^\lambda(x,y), \quad t,x,y \in (0,\infty),
$$
and consider $m \in \N$ such that $m-1 \leq \beta < m$.

According to \cite[(4.6)]{GLLNU}, for every $k\in \N$, $\theta \in (0,\pi)$  and $t,s,x,y \in (0,\infty)$, we can write
\begin{align}\label{H0}
    & \partial_t^k \left[ \frac{t+s}{[(t+s)^2 + (x-y)^2 + 2xy(1-\cos \theta)]^{\lambda+1}} \right]
        = -\frac{1}{2\lambda} \partial_t^{k+1} \left[ \frac{1}{[(t+s)^2 + (x-y)^2 + 2xy(1-\cos \theta)]^{\lambda}} \right] \nonumber \\
    & \qquad = - \frac{1}{2} \sum_{\ell \in \N, 0\leq \ell \leq (k+1)/2} (-1)^{k+1-\ell} E_{k+1,\ell}(t+s)^{k+1-2\ell}
                                \frac{(\lambda+1)(\lambda+2) \cdots (\lambda+k-\ell)}{[(t+s)^2 + (x-y)^2 + 2xy(1-\cos \theta)]^{\lambda+1+k-\ell}},
\end{align}
where $E_{k+1,\ell}=2^{k+1-2\ell}(k+1)!/(\ell !(k+1-2\ell)!)$, $\ell \in \N$, $0\leq \ell \leq (k+1)/2$.

Then, for each $k\in \N, \theta \in (0,\pi)$  and $t,s,x,y \in (0,\infty)$,
$$
\left| \partial_t^k \left[ \frac{t+s}{[(t+s)^2 + (x-y)^2 + 2xy(1-\cos \theta)]^{\lambda+1}} \right] \right|
    \leq \frac{C}{(t+s+|x-y|+\sqrt{2xy(1-\cos \theta)})^{2\lambda+1+k}}.
$$
It follows that, for $k\in \N$ and $t,s,x,y \in (0,\infty)$,
$$\partial_t^k P_{t+s}^\lambda(x,y)
    = \frac{2 \lambda (xy)^\lambda}{\pi} \int_0^\pi (\sin \theta)^{2\lambda-1}
        \partial_t^k \left[ \frac{t+s}{[(t+s)^2 + (x-y)^2 + 2xy(1-\cos \theta)]^{\lambda+1}} \right] d\theta .
$$
Then, for $k\in \N$, we have that
\begin{equation}\label{H2}
    |\partial_t^k P_{t+s}^\lambda(x,y)|
        \leq C \frac{(xy)^\lambda}{(t+s+|x-y|)^{2\lambda+1+k}}, \quad t,s,x,y \in (0,\infty),
\end{equation}
and also
\begin{align}\label{8prima}
|\partial_t^k P_{t+s}^\lambda(x,y)|&\leq C(xy)^\lambda \left(\int_0^{\pi /2}\frac{\theta ^{2\lambda -1}}{(t+s+|x-y|+\sqrt{xy}\theta)^{2\lambda +1+k}}d\theta +\int _{\pi /2}^\pi
\frac{(\sin \theta )^{2\lambda -1}}{(t+s+|x-y|+\sqrt{xy})^{2\lambda +1+k}}d\theta \right)\nonumber\\
&\leq C \left(\int_0^{\pi \sqrt{xy}/2}\frac{u^{2\lambda -1}}{(t+s+|x-y|+u)^{2\lambda +1+k}}du+
\frac{(xy)^\lambda }{(t+s+|x-y|+\sqrt{xy})^{2\lambda +1+k}} \right)\nonumber\\
&\leq C\frac{1}{(t+s+|x-y|)^{k+1}},\quad t,s,x,y\in (0,\infty ).
\end{align}

Let $g \in S_\lambda(0,\infty)$. We can write
\begin{equation}\label{H1}
    t^\beta \partial_t^\beta P_t^\lambda(g)(x)
        = \int_0^\infty K_\lambda^\beta (t;x,y) g(y) dy, \quad t,x \in (0,\infty).
\end{equation}
Indeed, by \eqref{H2} we can differentiate under the integral sign and write
$$\partial_t^m P_{t+s}^\lambda (g)(x)
    = \int_0^\infty \partial_t^m P_{t+s}^\lambda(x,y) g(y)dy, \quad t,s,x \in (0,\infty),$$
and, again by (\ref{H2}),
\begin{align*}
    & \int_0^\infty s^{m-\beta-1} \int_0^\infty |\partial_t^m P_{t+s}^\lambda(x,y)| \ |g(y)| dy ds
        \leq C x^\lambda \int_0^\infty \frac{s^{m-\beta-1}}{(t+s)^{2\lambda+1+m}}ds
        < \infty, \quad t,x \in (0,\infty).
\end{align*}
Hence,
\begin{align*}
    \partial_t^\beta P_t^\lambda(g)(x)
        = & \frac{e^{-i\pi(m-\beta)}}{\G(m-\beta)} \int_0^\infty \partial_t^m P_{t+s}^\lambda(g)(x) s^{m-\beta-1} ds \nonumber \\
        = & \int_0^\infty g(y) \frac{e^{-i\pi(m-\beta)}}{\G(m-\beta)} \int_0^\infty \partial_t^m P_{t+s}^\lambda(x,y) s^{m-\beta-1} ds dy \nonumber\\
        = & \int_0^\infty g(y) \partial_t^\beta P_t^\lambda(x,y) dy, \quad t,x \in (0,\infty),
\end{align*}
and \eqref{H1} is proved.

On the other hand, by using Minkowski's inequality and \eqref{8prima} we deduce that
\begin{align}\label{H4}
\left\| K_\lambda^\beta( \cdot ;x,y) \right\|_H    &
        \leq C \int_0^\infty s^{m-\beta-1} \left( \int_0^\infty \left| t^\beta \partial_t^m P_{t+s}^\lambda(x,y) \right|^2 \frac{dt}{t} \right)^{1/2} ds \nonumber\\
        &\leq C \int_0^\infty s^{m-\beta-1} \left( \int_0^\infty \frac{t^{2\beta -1}}{(t+s+|x-y|)^{2m+2}}dt \right)^{1/2} ds \nonumber \\
        &=C \int_0^\infty \frac{s^{m-\beta-1}}{(s+|x-y|)^{m-\beta +1}}ds=\frac{C}{|x-y|}, \quad x,y \in (0,\infty), \ x \neq y.
\end{align}
From \eqref{H4} we infer that the integral in \eqref{H1} converges as a $H$-Bochner integral, provided that $x \notin \supp g$. We define
$$F(x)
    = \int_0^\infty K_\lambda^\beta(\cdot;x,y)g(y)dy, \quad x \notin \supp g,$$
where the integral is understood in the $H$-Bochner sense.

Let $h \in H$. According to the properties of the Bochner integrals and Fubini's theorem we get
\begin{align*}
    \int_0^\infty h(t) F(x)(t) \frac{dt}{t}
        = & \int_0^\infty g(y)  \int_0^\infty K_\lambda^\beta(t;x,y) h(t)  \frac{dt}{t}dy \\
        = & \int_0^\infty h(t)  \int_0^\infty K_\lambda^\beta(t;x,y) g(y)  dy\frac{dt}{t}, \quad x \notin \supp g.
\end{align*}
Then, it follows that
$$G_\C^{\lambda,\beta}(g)(\cdot,x)
    = \int_0^\infty K_\lambda^\beta(\cdot;x,y) g(y)  dy, \quad x \notin \supp g,$$
where the integral is understood in the $H$-Bochner sense.

Suppose now that $g \in S_\lambda(0,\infty) \otimes \B$, that is, $g=\sum_{i=1}^n b_i g_i$,
where $b_i \in \B$ and $g_i \in S_\lambda(0,\infty)$, $i=1, \dots, n \in \N$. Since $\gamma(H,\C)=H$, we have that
$K_\lambda^\beta(\cdot;x,y)g(y) \in \gamma(H,\B)$, $x,y \in (0,\infty)$, $x \neq y$, and
\begin{align*}
    \|K_\lambda^\beta(\cdot;x,y)g(y)\|_{\gamma(H,\B)}
        \leq & C \|K_\lambda^\beta(\cdot;x,y)\|_{H}\|g(y)\|_\B
        \leq C \frac{\|g(y)\|_\B}{|x-y|}, \quad x,y \in (0,\infty), \ x \neq y.
\end{align*}
Also,
$$\int_0^\infty K_\lambda^\beta(\cdot;x,y) g(y)  dy
    = \sum_{i=1}^n b_i \int_0^\infty K_\lambda^\beta(\cdot;x,y) g_i(y) dy, \quad x \notin \supp g, $$
where the integral in the left hand side is understood in the $\gamma(H,\B)$-Bochner sense and the ones in the right hand
side are understood in the $H$-Bochner sense. Hence, we obtain
\begin{equation}\label{H5}
    G_\B^{\lambda,\beta}(g)(\cdot,x)
    = \int_0^\infty K_\lambda^\beta(\cdot;x,y) g(y)  dy, \quad x \notin \supp g,
\end{equation}
as elements in $\gamma(H,\B)$, where the integral is understood in the $\gamma(H,\B)$-Bochner sense.

On the other hand, we have that, for every $t,x,y \in (0,\infty)$, and $x \neq y$,
\begin{align*}
    & \partial_x K_\lambda^\beta(t;x,y)
        = \frac{t^\beta e^{-i\pi(m-\beta)}}{\G(m-\beta)} \int_0^\infty \partial_x \partial_t^m P_{t+s}^\lambda(x,y) s^{m-\beta-1} ds \\
    & \qquad = \frac{2\lambda t^\beta e^{-i\pi(m-\beta)}}{\pi\G(m-\beta)} \int_0^\infty s^{m-\beta-1}
                    \int_0^\pi (\sin \theta )^{2\lambda -1}\partial_x \partial_t^m \left[ \frac{(xy)^\lambda(t+s)}{[(t+s)^2+(x-y)^2+2xy(1-\cos \theta)]^{\lambda+1}} \right] d\theta  ds.
\end{align*}
Derivations under the integral signs can be justified as above. By making a derivative with respect to $x$ in \eqref{H0} for $k=m$, we get
\begin{align}\label{I1I2}
    & | \partial_x \partial_t^mP_{t+s}^\lambda (x,y)|
        \leq C \Big\{ (xy)^\lambda \int_0^\pi \frac{(\sin \theta)^{2\lambda-1} (|x-y|+y(1-\cos \theta))}{(t+s+|x-y|+\sqrt{2xy(1-\cos \theta)})^{2\lambda+m+3}} d\theta\nonumber\\
      & \qquad + x^{\lambda-1}y^\lambda \int_0^\pi \frac{(\sin \theta)^{2\lambda-1}}{(t+s+|x-y|+\sqrt{2xy(1-\cos \theta)})^{2\lambda+m+1}} d\theta\Big\}\nonumber \\
      & \quad \leq C \Big\{ (xy)^\lambda \int_0^\pi \frac{(\sin \theta)^{2\lambda-1}}{(t+s+|x-y|+\sqrt{2xy(1-\cos \theta)})^{2\lambda+m+2}} d\theta \nonumber\\
   & \qquad + x^{\lambda-1}y^\lambda \int_0^\pi \frac{(\sin \theta)^{2\lambda-1}}{(t+s+|x-y|+\sqrt{2xy(1-\cos \theta)})^{2\lambda+m+1}} d\theta\Big\} \nonumber\\
    & \quad = C \Big\{ I_1(t,s;x,y) + I_2(t,s;x,y) \Big\}, \quad t,s,x,y \in (0,\infty).
\end{align}

By proceeding as in the proof of \eqref{8prima} for $k=m+1$ we obtain
\begin{equation}\label{10.1}
I_1(t,s;x,y)
    \leq \frac{C}{(t+s+|x-y|)^{m+2}}, \quad t,s,x,y \in (0,\infty).
\end{equation}
Since $\lambda \geq 1$, we also  have that
\begin{align}\label{I2}
    I_2(t,s;x,y)
        \leq & C y\left\{\int_0^{\pi/2} \frac{\theta^{2\lambda-1}(xy)^{\lambda -1}}
                   {(t+s+|x-y|+\sqrt{xy}\theta)^{2\lambda+m+1}}  d\theta + \frac{(xy)^{\lambda-1}}{(t+s+|x-y|+\sqrt{xy})^{2\lambda+m+1}}\right\}\nonumber\\
        \leq & C y \left\{\int_0^{\pi/2} \frac{\theta}{(t+s+|x-y|+\sqrt{xy}\theta)^{m+3}}  d\theta +\frac{1}{(t+s+|x-y|+\sqrt{xy})^{m+3}}\right\}\nonumber\\
        \leq & C \left\{
                    \begin{array}{l}
                        \displaystyle \int_0^{\pi/2} \frac{\theta y}{(t+s+|x-y|+y\theta)^{m+3}}  d\theta+\frac{1}{(t+s+|x-y|)^{m+2}}, \quad 0<y \leq 2x, \ x \neq y,\\
                            \quad \\
                        \displaystyle \frac{y}{(t+s+|x-y|)^{m+3}}, \quad 0<2x \leq y,
                    \end{array}\right.\nonumber\\
        \leq &C\frac{1}{(t+s+|x-y|)^{m+2}},\quad t,s,x,y\in (0,\infty ).
\end{align}
Hence, by Minkowski's inequality we get
\begin{align*}
  \left\|\partial_x K_\lambda^\beta (\cdot; x, y) \right\|_H&\leq C \int_0^\infty s^{m-\beta-1} \left( \int_0^\infty  t^{2\beta -1}|\partial _x\partial_t^m P_{t+s}^\lambda(x,y) |^2 dt \right)^{1/2} ds \\
        &\leq C \int_0^\infty s^{m-\beta-1} \left( \int_0^\infty \frac{t^{2\beta -1}}{(t+s+|x-y|)^{2m+4}}dt \right)^{1/2} ds =\frac{C}{|x-y|^2}, \quad x,y \in (0,\infty), \ x \neq y.
\end{align*}
We conclude that
\begin{equation}\label{H6}
    \left\| \partial_x K_\lambda^\beta (\cdot ; x, y) \right\|_H
        \leq \frac{C}{|x-y|^2}, \quad x,y \in (0,\infty), \ x \neq y.
\end{equation}
By taking into account symmetries we also get
\begin{equation}\label{H7}
    \left\| \partial_y K_\lambda^\beta (\cdot, x, y) \right\|_H
        \leq \frac{C}{|x-y|^2}, \quad x,y \in (0,\infty), \ x \neq y.
\end{equation}

Fix now $x,y \in (0,\infty)$, $x \neq y$. We define the operator $L(x,y)$ by
$$\begin{array}{llll}
    L(x,y): & \B &\longrightarrow & \gamma(H,\B) \\
            & b  &\longmapsto     & L(x,y)b=K_\lambda^\beta (\cdot ; x, y)b.
  \end{array}$$
If $\{h_j\}_{j \in \N}$ is an orthonormal basis in $H$, since $\gamma(H,\C)=H$,
we deduce from \eqref{H4},
\begin{align*}
    \left\| L(x,y)b \right\|_{\gamma(H,\B)}
        = & \left\| K_\lambda^\beta (\cdot, x, y)b \right\|_{\gamma(H,\B)}
        =  \left( \E \left\| \sum_{j=1}^\infty \gamma_j \int_0^\infty K_\lambda^\beta (t; x, y) h_j(t) \frac{dt}{t} b \right\|_\B^2 \right)^{1/2} \\
        = & \|b\|_\B \left( \E \left| \sum_{j=1}^\infty \gamma_j \int_0^\infty K_\lambda^\beta (t; x, y) h_j(t) \frac{dt}{t} \right|^2 \right)^{1/2}
        \leq C \|b\|_\B \left\| K_\lambda^\beta (\cdot; x, y) \right\|_{H} \\
        \leq & C \frac{\|b\|_\B}{|x-y|}, \quad b \in \B.
\end{align*}
Hence,
\begin{equation}\label{H8a}
    \left\| L(x,y) \right\|_{\B \to \gamma(H,\B)}
        \leq \frac{C}{|x-y|}.
\end{equation}
In a similar way, by identifying $\partial_x K_\lambda^\beta (\cdot; x, y)$ and $\partial_y K_\lambda^\beta (\cdot; x, y)$ with the
corresponding operators in $L(\B,\gamma(H,\B))$, the space of linear and bounded operators from $\B$ into $\gamma(H,\B)$, and by using \eqref{H6}
and \eqref{H7},  we can get that
\begin{equation}\label{H8b}
    \left\| \partial_x K_\lambda^\beta (\cdot; x, y) \right\|_{\B \to \gamma(H,\B)}
        \leq \frac{C}{|x-y|^2},
\end{equation}
and
\begin{equation}\label{H8c}
    \left\| \partial_y K_\lambda^\beta (\cdot; x, y) \right\|_{\B \to \gamma(H,\B)}
        \leq \frac{C}{|x-y|^2}.
\end{equation}

By Theorem \ref{Th:B}, $G_\B^{\lambda ,\beta}$ is a bounded operator from $L^p((0,\infty ),\B)$ into $L^p((0,\infty ),\gamma (H,\B))$, for every $1<p<\infty$. Then, from \eqref{H5}, \eqref{H8a}, \eqref{H8b} and \eqref{H8c} we conclude that the operator $G_\B^{\lambda,\beta}$ is a $(\B,\gamma(H,\B))$-Calder\'on-Zygmund
operator. Hence, $G_\B^{\lambda,\beta}$ can be extended from $S_\lambda(0,\infty) \otimes \B$ to $L^1((0,\infty),\B)$ as a bounded operator
from $L^1((0,\infty),\B)$ into $L^{1,\infty }((0,\infty),\gamma(H,\B))$ and to $H^1((0,\infty),\B)$ as a bounded operator from $H^1((0,\infty),\B)$ into $L^1((0,\infty),\gamma (H,\B))$.
 We denote by
$\widetilde{G}_\B^{\lambda,\beta}$ the extension of $G_\B^{\lambda,\beta}$ to $L^1((0,\infty),\B)$ as a bounded operator from
$L^1((0,\infty),\B)$ into $L^{1,\infty}((0,\infty),\gamma (H,\B))$.

We now prove that
$$\widetilde{G}_\B^{\lambda,\beta}(g)(x)
    = G_\B^{\lambda,\beta}(g)(\cdot, x), \quad \text{a.e. } x \in (0,\infty),$$
as elements of $\gamma (H,\mathbb{B})$, for every $g \in L^1((0,\infty),\B)$.

Let $g \in L^1((0,\infty),\B)$. We choose a sequence $(g_k)_{k \in \N} \subset S_\lambda(0,\infty) \otimes \B$ such that
$$g_k \longrightarrow g, \quad \text{as } k \to \infty, \ \text{in } L^1((0,\infty),\B).$$
Then,
$$G_\B^{\lambda,\beta}(g_k) \longrightarrow \widetilde{G}_\B^{\lambda,\beta}(g),
    \quad \text{as } k \to \infty, \ \text{in } L^{1,\infty}((0,\infty),\gamma(H,\B)),$$
and hence, there exist a set $\Omega \subset (0,\infty)$, being $|(0,\infty) \setminus \Omega|=0$, and an increasing sequence $(n_k)_{k \in \N} \subset \N$
such that, for every $x \in \Omega$,
$$G_\B^{\lambda,\beta}(g_{n_k})(\cdot ,x) \longrightarrow \widetilde{G}_\B^{\lambda,\beta}(g)(x),
    \quad \text{as } k \to \infty, \ \text{in } \gamma(H,\B).$$
Let $\eps>0$. By proceeding as in the proof of \eqref{H4} we obtain
\begin{align*}
 \left\| K_\lambda^\beta( \cdot ;x,y) \right\|_{L^2((\eps,\infty),dt/t)}&  \leq C \int_0^\infty s^{m-\beta-1} \left( \int_\eps^\infty \frac{t^{2\beta-1}}{(t+s+|x-y|)^{2m+2}} dt \right)^{1/2}ds  \\
    & = C \int_0^\infty s^{m-\beta-1} \left( \int_0^\infty \frac{(t+\eps)^{2\beta-1}}{(t+\eps+s+|x-y|)^{2m+2}} dt \right)^{1/2} ds  \\
    &\leq  C\int_0^\infty s^{m-\beta-1} \left( \int_0^\infty \frac{t^{2\beta-1}+\eps^{2\beta-1}}{(t+\eps+s+|x-y|)^{2m+2}} dt \right)^{1/2} ds \\
    & \leq C \left( \frac{1}{\eps + |x-y|} + \frac{\eps^{\beta-1/2}}{(\eps + |x-y|)^{\beta+1/2}} \right)\leq \frac{C}{\eps}, \quad x,y \in (0,\infty).
\end{align*}
Then, by using Minkowski's inequality it follows that
\begin{align*}
     \left\| G_\B^{\lambda,\beta}(g_{n_k})(\cdot,x) - G_\B^{\lambda,\beta}(g)(\cdot,x) \right\|_{L^2((\eps,\infty),dt/t,\B)}&\leq C \int_0^\infty \|g(y)-g_{n_k}(y)\|_\B \left\| K_\lambda^\beta( \cdot ;x,y) \right\|_{L^2((\eps,\infty),dt/t)} dy  \\
    &  \leq \frac{C}{\eps}\|g-g_{n_k}\|_{L^1((0,\infty ),\B)}, \quad x \in (0,\infty),
\end{align*}
that is,
$$G_\B^{\lambda,\beta}(g_{n_k})(\cdot,x) \longrightarrow G_\B^{\lambda,\beta}(g)(\cdot,x),
    \quad \text{as } k \to \infty, \ \text{in } L^2((\eps,\infty),dt/t,\B),$$
uniformly in $x \in (0,\infty)$.

By taking into account that $\gamma(H,\B)$ is continuously contained in $L(H,\B)$, the space of bounded linear operators
from $H$ into $\B$, it follows that, for every $S \in \B^*$ and $h \in C_c^\infty(0,\infty)$, the space of smooth functions with compact
support on $(0,\infty)$, we have that
\begin{align*}
    \langle S , \widetilde{G}_\B^{\lambda,\beta}(g)(x)[h] \rangle _{\B^*,\B}
        = & \lim_{k \to \infty} \langle S , G_\B^{\lambda,\beta}(g_{n_k})(\cdot, x)[h] \rangle  _{\B^*,\B}
        =   \lim_{k \to \infty} \int_0^\infty \langle S , G_\B^{\lambda,\beta}(g_{n_k})(t,x) \rangle  _{\B^*,\B}h(t) \frac{dt}{t} \\
        = &\int_0^\infty \langle S , G_\B^{\lambda,\beta}(g)(t,x) \rangle  _{\B^*,\B}h(t) \frac{dt}{t}, \quad x \in \Omega,
\end{align*}
and then,
\begin{align*}
    \left| \int_0^\infty \langle S , G_\B^{\lambda,\beta}(g)(t,x) \rangle  _{\B^*,\B}h(t) \frac{dt}{t} \right|
        \leq \|S\|_{\B^*} \| \widetilde{G}_\B^{\lambda,\beta}(g)(x)[h] \|_\B
        \leq \|S\|_{\B^*} \| \widetilde{G}_\B^{\lambda,\beta}(g)(x) \|_{H \to \B} \|h\|_H, \quad x \in \Omega.
\end{align*}
Hence,
$\langle S, G_\B^{\lambda,\beta}(g)(\cdot,x) \rangle  _{\B^*,\B}\in H$, $x \in \Omega$, and
$$\widetilde{G}_\B^{\lambda,\beta}(g)(x)
    = G_\B^{\lambda,\beta}(g)(\cdot ,x), \quad x \in \Omega.$$
We conclude that $G_\B^{\lambda,\beta}$ is bounded from $L^1((0,\infty),\B)$ into $L^{1,\infty}((0,\infty),\gamma(H,\B))$
and from $H^1((0,\infty),\B)$ into $L^{1}((0,\infty),\gamma(H,\B))$.
\end{proof}

Next, we establish the behavior of $G_\mathbb{B}^{\lambda ,\beta}$ on $H^1_{\rm o}((0,\infty ),\B)$.

\begin{Lem}\label{Lemap13}
Let $\B$ be a UMD Banach space, $\beta >0$ and $\lambda \geq 1$. The operator $G_\mathbb{B}^{\lambda ,\beta}$ is bounded from $H^1_{\rm o}((0,\infty ),\B)$ into $L^1((0,\infty),\gamma(H,\B))$.
\end{Lem}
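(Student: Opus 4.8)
The strategy is to use the atomic decomposition of $H^1_{\rm o}((0,\infty),\B)$ from Proposition~\ref{Prop1}, reduce the estimate to a uniform bound on $q$-atoms, and treat the two kinds of atoms separately: atoms of type $(Aii)$ will be absorbed into the classical $H^1\to L^1$ estimate already contained in Lemma~\ref{Lemap8}, while atoms of type $(Ai)$ (which have no cancellation) will be handled by combining the $L^2$-boundedness of $G_\B^{\lambda,\beta}$ with a refined size estimate for the kernel near the origin.

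First I would fix $q\in(1,\infty)$ and write $f=\sum_{j\in\N}\lambda_j a_j$ in $L^1((0,\infty),\B)$ with $a_j$ a $q$-atom, $\sum_j|\lambda_j|<\infty$ and $\sum_j|\lambda_j|\le C\|f\|_{H^1_{\rm o}((0,\infty),\B)}$, as granted by Proposition~\ref{Prop1}. Since, by Lemma~\ref{Lemap8}, $\widetilde G_\B^{\lambda,\beta}$ is bounded from $L^1((0,\infty),\B)$ into $L^{1,\infty}((0,\infty),\gamma(H,\B))$, the partial sums $G_\B^{\lambda,\beta}\big(\sum_{j\le N}\lambda_j a_j\big)$ converge to $G_\B^{\lambda,\beta}(f)$ in $L^{1,\infty}((0,\infty),\gamma(H,\B))$. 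On the other hand, once the uniform bound $\|G_\B^{\lambda,\beta}(a)\|_{L^1((0,\infty),\gamma(H,\B))}\le C$ is known for every $q$-atom $a$, those partial sums form a Cauchy sequence in $L^1((0,\infty),\gamma(H,\B))$, hence converge there, necessarily to $G_\B^{\lambda,\beta}(f)$, and $\|G_\B^{\lambda,\beta}(f)\|_{L^1((0,\infty),\gamma(H,\B))}\le C\sum_j|\lambda_j|\le C\|f\|_{H^1_{\rm o}((0,\infty),\B)}$. So everything reduces to the uniform atomic estimate.

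For an atom $a$ of type $(Aii)$ this is immediate: $a$ is a constant multiple of a classical $H^1((0,\infty),\B)$-atom, so $\|a\|_{H^1((0,\infty),\B)}\le C$ and the second part of Lemma~\ref{Lemap8} gives $\|G_\B^{\lambda,\beta}(a)\|_{L^1((0,\infty),\gamma(H,\B))}\le C$. The remaining case is $a=\tfrac{b}{\delta}\chi_{(0,\delta)}$ with $\|b\|_\B=1$, and I would split $(0,\infty)=(0,2\delta)\cup(2\delta,\infty)$. On $(0,2\delta)$, since $\|a\|_{L^2((0,\infty),\B)}=\delta^{-1/2}$, Theorem~\ref{Th:B} with $p=2$ and Hölder's inequality give
$$\int_0^{2\delta}\|G_\B^{\lambda,\beta}(a)(x)\|_{\gamma(H,\B)}\,dx\le(2\delta)^{1/2}\,\|G_\B^{\lambda,\beta}(a)\|_{L^2((0,\infty),\gamma(H,\B))}\le C(2\delta)^{1/2}\delta^{-1/2}=C.$$
On $(2\delta,\infty)$, for $x>2\delta$ we have $x\notin\supp a$, so by the kernel representation \eqref{H5}, $G_\B^{\lambda,\beta}(a)(\cdot,x)=\tfrac{1}{\delta}\int_0^\delta K_\lambda^\beta(\cdot;x,y)\,b\,dy$ in $\gamma(H,\B)$. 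The crucial point is to improve the size estimate of the kernel for small $y$: starting from \eqref{H2} rather than \eqref{8prima} (so as to keep the factor $(xy)^\lambda$) and repeating the Minkowski-inequality computation that led to \eqref{H4}, one obtains, for $0<y<\delta<x/2$,
$$\|K_\lambda^\beta(\cdot;x,y)\|_H\le C\,\frac{(xy)^\lambda}{x^{2\lambda+1}}=C\,\frac{y^\lambda}{x^{\lambda+1}},$$
and hence, exactly as in the derivation of \eqref{H8a}, $\|K_\lambda^\beta(\cdot;x,y)\|_{\B\to\gamma(H,\B)}\le C\,y^\lambda x^{-\lambda-1}$. Integrating,
$$\int_{2\delta}^\infty\|G_\B^{\lambda,\beta}(a)(\cdot,x)\|_{\gamma(H,\B)}\,dx\le\frac{C}{\delta}\int_{2\delta}^\infty\frac{1}{x^{\lambda+1}}\Big(\int_0^\delta y^\lambda\,dy\Big)dx\le C\,\delta^\lambda\int_{2\delta}^\infty\frac{dx}{x^{\lambda+1}}\le C.$$
Adding the two pieces yields $\|G_\B^{\lambda,\beta}(a)\|_{L^1((0,\infty),\gamma(H,\B))}\le C$ for type $(Ai)$ atoms as well, which finishes the argument.

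I expect the main obstacle to be precisely the type $(Ai)$ atoms, and within that the extra decay $y^\lambda/x^{\lambda+1}$ of $\|K_\lambda^\beta(\cdot;x,y)\|_H$: one must use the bound \eqref{H2}, which retains the vanishing factor $(xy)^\lambda$ as $y\to0^+$, and then check that the resulting integrals in $s$ and $t$ converge (they do, since $m-1\le\beta<m$ and $\lambda\ge1$) and produce the claimed power $x^{-\lambda-1}$. Using only the $1/|x-y|$ size estimate \eqref{H4} would leave a logarithmically divergent integral over $(2\delta,\infty)$, so this refinement is where the condition on $\lambda$ (and the structure of the Bessel Poisson kernel) really enters; the rest is routine.
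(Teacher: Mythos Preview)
Your proposal is correct and follows essentially the same approach as the paper's proof: atomic decomposition via Proposition~\ref{Prop1}, type~$(Aii)$ atoms handled by the $H^1\to L^1$ bound in Lemma~\ref{Lemap8}, and type~$(Ai)$ atoms split over $(0,2\delta)$ (via Theorem~\ref{Th:B} with $p=2$) and $(2\delta,\infty)$ (via the refined kernel size $\|K_\lambda^\beta(\cdot;x,y)\|_H\le C(xy)^\lambda/|x-y|^{2\lambda+1}$, which is exactly the paper's estimate~\eqref{Kb}). Your justification of the passage from the atomic bounds to the full $L^1$ estimate via the $L^{1,\infty}$ convergence is the same mechanism the paper uses.
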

\begin{proof}
Let $f \in H_{\rm o}^1((0,\infty ),\B)$. By Proposition \ref{Prop1} we write $f= \sum_{j\in \N} \lambda_j a_j$, where $a_j$ is a $2$-atom and
$\lambda_j \in \C$, $j \in \N$, being $\sum_{j\in \N}|\lambda_j|<\infty$. Since the series
$\sum_{j\in \N}\lambda_j a_j$ converges in $L^1((0,\infty),\B)$ and $G_\B^{\lambda,\beta}$ is bounded from
$L^1((0,\infty),\B)$ into $L^{1,\infty}(0,\infty),\gamma(H,\B))$ (Lemma \ref{Lemap8}), we have that
\begin{equation}\label{H11}
    G_\B^{\lambda,\beta} (f)
        = \sum_{j\in \N}\lambda _jG_\B^{\lambda,\beta} (a_j),
\end{equation}
where the series converges in $L^{1,\infty}((0,\infty),\gamma(H,\B))$.

If $a$ is a $2$-atom satisfying $(Aii)$, since, by Lemma \ref{Lemap8}, $G_\B^{\lambda,\beta}$ is a bounded operator from $H^1((0,\infty),\B)$
into $L^{1}((0,\infty),\gamma(H,\B))$, we get
\begin{equation}\label{H12}
    \| G_\B^{\lambda,\beta} (a) \|_{L^{1}((0,\infty),\gamma(H,\B))}
        \leq C,
\end{equation}
being $C>0$ independent of $a$.

Suppose now that $a=b \chi_{(0,\delta)}/\delta$, for some $\delta>0$ and $b \in \B$, $\|b\|_\B=1$.
By taking into account that $G_\B^{\lambda,\beta}$ is bounded from $L^2((0,\infty),\B)$ into
$L^2((0,\infty),\gamma(H,\B))$ (Theorem \ref{Th:B}), we obtain
\begin{align}\label{H13}
     \int_0^{2\delta} \|G_\B^{\lambda,\beta}(a)(\cdot,x) \|_{\gamma(H,\B)} dx
        &\leq (2\delta)^{1/2}\|G_\B^{\lambda,\beta}(a) \|_{L^2((0,\infty),\gamma(H,\B))}
      \leq C \delta ^{1/2}\|a \|_{L^2((0,\infty),\B)}
        \leq C,
\end{align}
where $C>0$ does not depend on $\delta$ or $b$.

According to \eqref{H5}, since $\gamma(H,\C)=H$, we have that
$$\|G_\B^{\lambda,\beta}(a)(\cdot,x) \|_{\gamma(H,\B)}
    \leq \frac{1}{\delta} \int_0^\delta \| K_\lambda^\beta (\cdot; x,y) \|_H dy, \quad x \geq 2 \delta.$$

By proceeding as in \eqref{H4} and taking into account (\ref{H2}) we can write
\begin{align}\label{Kb}
    & \left\| K_\lambda^\beta( \cdot ;x,y) \right\|_H \leq C (xy)^\lambda \int_0^\infty s^{m-\beta-1} \left( \int_0^\infty \frac{t^{2\beta-1}}{(t+s+|x-y|)^{4\lambda +2m+2}} dt \right)^{1/2}ds \nonumber\\
    &\quad \leq C(xy)^\lambda \int_0^\infty \frac{s^{m-\beta -1}}{(s+|x-y|)^{2\lambda +1 +m-\beta}}ds \leq C \frac{(xy)^\lambda}{|x-y|^{2\lambda+1}}, \quad x,y \in (0,\infty), \ x \neq y.
\end{align}
Hence, it follows that
$$\|G_\B^{\lambda,\beta}(a)(\cdot,x) \|_{\gamma(H,\B)}
    \leq \frac{C}{\delta} \int_0^\delta \frac{(xy)^\lambda}{|x-y|^{2\lambda+1}} dy
    \leq \frac{C}{\delta x^{\lambda+1}} \int_0^\delta y^\lambda dy
    \leq C \frac{\delta^\lambda}{x^{\lambda+1}}, \quad x \geq 2 \delta,$$
and we get
\begin{equation}\label{H14}
    \int_{2\delta}^\infty \|G_\B^{\lambda,\beta}(a)(\cdot,x) \|_{\gamma(H,\B)} dx
        \leq C \int_{2\delta}^\infty \frac{\delta^\lambda}{x^{\lambda+1}}dx
        \leq C,
\end{equation}
where $C>0$ does not depend on $\delta$ and $b$.

From \eqref{H13} and \eqref{H14} we deduce
\begin{equation}\label{H15}
    \|G_\B^{\lambda,\beta}(a)\|_{L^1((0,\infty),\gamma(H,\B))}
        \leq C,
\end{equation}
where $C>0$ is independent of $\delta$ and $b$.

By using \eqref{H11}, \eqref{H12} and \eqref{H15} we conclude
$$ \|G_\B^{\lambda,\beta}(f)\|_{L^1((0,\infty),\gamma(H,\B))}
        \leq C \sum_{j\in \N}|\lambda_j|.$$
Hence,
$$ \|G_\B^{\lambda,\beta}(f)\|_{L^1((0,\infty),\gamma(H,\B))}
        \leq C \|f\|_{H^1_{\rm o}(\R, \B)}.$$
\end{proof}

According to \cite[Theorem 2.4]{BS2} the maximal operator $P^\lambda_*$ given by
$$P_*^\lambda(g)(x)
    = \sup_{s>0} \left\| P_s^\lambda(g)(x) \right\|_{\gamma(H,\B)},$$
for every $g \in L^{p}((0,\infty),\gamma(H,\B))$, $1 \leq p \leq \infty$,  is bounded from
$L^p((0,\infty),\gamma(H,\B))$ into $L^p(0,\infty )$, for every $1<p<\infty$, and from $L^1((0,\infty),\gamma(H,\B))$
into $L^{1,\infty}(0,\infty)$. Then, the operator $P_*^\lambda \circ G_\B^{\lambda,\beta}$ is bounded from
$H^1((0,\infty),\B)$ into $L^{1,\infty}(0,\infty)$.

We now show that $P^\lambda_* \circ G_\B^{\lambda,\beta}$ is a bounded operator from $H_{\rm o}^1((0,\infty ), \B)$
into $L^1(0,\infty)$.
\begin{Lem}\label{Lemap14}
Let $\B$ be a UMD Banach space, $\beta >0$ and $\lambda \geq 1$. We have that $P^\lambda _*\circ G_\mathbb{B}^{\lambda ,\beta}$ is a bounded operator from $H^1_{\rm o}((0,\infty ),\B)$ into $L^1(0,\infty)$.
\end{Lem}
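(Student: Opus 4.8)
The plan is to reduce the proof, by means of the atomic decomposition of Proposition~\ref{Prop1}, to a uniform estimate over $2$-atoms, and then, for a single atom $a$, to split $\|P_*^\lambda(G_\B^{\lambda,\beta}(a))\|_{L^1(0,\infty)}$ into a \emph{local} part, over a bounded dilate of the supporting interval, handled by the $L^2$-boundedness already at our disposal, and a \emph{global} part, handled by pointwise kernel estimates that are uniform in the Poisson time. Concretely, I would write $f=\sum_j\lambda_ja_j$ with $\sum_j|\lambda_j|<\infty$ and $a_j$ $2$-atoms as in Proposition~\ref{Prop1}; since the partial sums converge to $f$ in $H^1_{\rm o}((0,\infty),\B)$, Lemma~\ref{Lemap13} gives $G_\B^{\lambda,\beta}(f)=\sum_j\lambda_jG_\B^{\lambda,\beta}(a_j)$ in $L^1((0,\infty),\gamma(H,\B))$, and the $L^1$-continuity of $g\mapsto P_s^\lambda(g)(x)$ for fixed $s,x$ together with the sublinearity of $P_*^\lambda$ yields $P_*^\lambda(G_\B^{\lambda,\beta}(f))\le\sum_j|\lambda_j|P_*^\lambda(G_\B^{\lambda,\beta}(a_j))$ a.e. Hence it suffices to show $\|P_*^\lambda(G_\B^{\lambda,\beta}(a))\|_{L^1(0,\infty)}\le C$ with $C$ independent of the $2$-atom $a$.

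Fix $a$. If $a$ satisfies $(Aii)$, let $I$ be the associated interval, of centre $x_I$ and length $|I|$, and recall $\|a\|_{L^2((0,\infty),\B)}\le|I|^{-1/2}$; if $a$ satisfies $(Ai)$, write $a=b\chi_{(0,\delta)}/\delta$ with $\|b\|_\B=1$, so $\|a\|_{L^2((0,\infty),\B)}=\delta^{-1/2}$, and set $I=(0,\delta)$. For the local part, combining that $G_\B^{\lambda,\beta}$ is bounded from $L^2((0,\infty),\B)$ into $L^2((0,\infty),\gamma(H,\B))$ (Theorem~\ref{Th:B}) and that $P_*^\lambda$ is bounded from $L^2((0,\infty),\gamma(H,\B))$ into $L^2(0,\infty)$ (\cite[Theorem~2.4]{BS2}) with the Cauchy--Schwarz inequality gives
$$\int_{2I\cap(0,\infty)}P_*^\lambda(G_\B^{\lambda,\beta}(a))(x)\,dx\le(2|I|)^{1/2}\|P_*^\lambda(G_\B^{\lambda,\beta}(a))\|_{L^2(0,\infty)}\le C|I|^{1/2}\|a\|_{L^2((0,\infty),\B)}\le C$$
in case $(Aii)$, and likewise $\int_0^{2\delta}P_*^\lambda(G_\B^{\lambda,\beta}(a))(x)\,dx\le(2\delta)^{1/2}C\delta^{-1/2}=C$ in case $(Ai)$.

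For the global part the starting point is the identity
$$P_s^\lambda(G_\B^{\lambda,\beta}(a))(\cdot,x)=\big[\,t\longmapsto t^\beta\partial_t^\beta P_{t+s}^\lambda(a)(x)\,\big]\qquad\text{in }\gamma(H,\B),\quad s,x\in(0,\infty),$$
where $\partial_t^\beta P_{t+s}^\lambda$ is the $\beta$-th fractional derivative in $t$ of $t\mapsto P_{t+s}^\lambda$; it follows from $P_s^\lambda P_t^\lambda=P_{t+s}^\lambda$ upon interchanging $P_s^\lambda$ with the integral (in the fractional-integration variable) defining $\partial_t^\beta$, an interchange justified by \eqref{H2} exactly as in the proof of \eqref{H1}. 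Writing the right-hand side as $\int_I t^\beta\partial_t^\beta P_{t+s}^\lambda(x,y)a(y)\,dy$, invoking $\int_I a=0$ in case $(Aii)$, and bounding the $\gamma(H,\B)$-norm of a Bochner integral by the integral of the norms, we get, for $x\notin 2I$ and every $s>0$,
$$\|P_s^\lambda(G_\B^{\lambda,\beta}(a))(\cdot,x)\|_{\gamma(H,\B)}\le\int_I\|a(y)\|_\B\,\|t\mapsto t^\beta(\partial_t^\beta P_{t+s}^\lambda(x,y)-\partial_t^\beta P_{t+s}^\lambda(x,x_I))\|_H\,dy$$
in case $(Aii)$, and $\|P_s^\lambda(G_\B^{\lambda,\beta}(a))(\cdot,x)\|_{\gamma(H,\B)}\le\delta^{-1}\int_0^\delta\|t\mapsto t^\beta\partial_t^\beta P_{t+s}^\lambda(x,y)\|_H\,dy$ for $x>2\delta$ in case $(Ai)$. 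I would then establish the kernel estimates
$$\|t\mapsto t^\beta\partial_t^\beta P_{t+s}^\lambda(x,y)\|_H\le C\frac{(xy)^\lambda}{|x-y|^{2\lambda+1}},\qquad\|t\mapsto t^\beta\partial_y\partial_t^\beta P_{t+s}^\lambda(x,y)\|_H\le\frac{C}{|x-y|^2},\quad x\ne y,$$
\emph{uniformly in} $s>0$: they are proved precisely as \eqref{Kb} and \eqref{H7} are in the proof of Lemma~\ref{Lemap8}, now inserting \eqref{H2} (respectively the bound for $|\partial_y\partial_t^m P_{t+s+u}^\lambda|$ arising from \eqref{I1I2}, \eqref{10.1} and \eqref{I2}, where $\lambda\ge1$ enters) and carrying out the Beta-type integrals in the fractional-integration variable and in $t$, the extra parameter $s$ only improving the estimates. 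Feeding these into the two displays, and using the mean value theorem in $y$, $\|a\|_{L^1(I,\B)}\le|I|^{1/2}\|a\|_{L^2(I,\B)}\le C$ and $|x-y|\sim|x-x_I|$ for $y\in I$, $x\notin2I$ in case $(Aii)$, respectively a direct integration of $y^\lambda$ over $(0,\delta)$ and $|x-y|\sim x$ for $0<y<\delta<x/2$ in case $(Ai)$, we arrive at
$$P_*^\lambda(G_\B^{\lambda,\beta}(a))(x)\le C\frac{|I|}{|x-x_I|^2}\ \ (x\notin 2I),\qquad P_*^\lambda(G_\B^{\lambda,\beta}(a))(x)\le C\frac{\delta^\lambda}{x^{\lambda+1}}\ \ (x>2\delta),$$
in cases $(Aii)$ and $(Ai)$ respectively; each right-hand side integrates to a constant over the relevant complement (the second integral being exactly \eqref{H14}). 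Together with the local estimate this establishes the atomic bound, and hence the lemma.

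The step I expect to be the main obstacle is the global estimate in case $(Aii)$. A naive attempt to estimate the composite kernel $\int_0^\infty P_s^\lambda(x,z)K_\lambda^\beta(t;z,y)\,dz$ using the separate bounds for $P_s^\lambda(x,z)$ and $\partial_yK_\lambda^\beta(\cdot;z,y)$ fails, because the singularity $\|\partial_yK_\lambda^\beta(\cdot;z,y)\|_H\le C|z-y|^{-2}$ at $z=y$ is not integrable against $P_s^\lambda(x,z)$. This is circumvented by noting that this composite kernel is actually the smooth function $t^\beta\partial_t^\beta P_{t+s}^\lambda(x,y)$ (the semigroup property), so that one differentiates directly in $y$; since every resulting estimate is uniform in $s$, the supremum defining $P_*^\lambda$ causes no difficulty, and the argument then parallels that of Lemma~\ref{Lemap13}.
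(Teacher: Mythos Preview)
Your proof is correct and rests on the same three ingredients as the paper's---the semigroup identity $P_s^\lambda(G_\B^{\lambda,\beta}(a))(t,x)=t^\beta\partial_t^\beta P_{t+s}^\lambda(a)(x)$, the uniform-in-$s$ $H$-norm bounds on the composite kernel $K_\lambda^\beta(t,s;x,y)=t^\beta\partial_t^\beta P_{t+s}^\lambda(x,y)$, and the $L^2$-boundedness for the local part---but the overall organisation is different. The paper does not argue atom by atom for the $(Aii)$ case; instead it packages $Z_{\lambda,N}^\beta(f)(t,s;x)=P_s^\lambda(G_\B^{\lambda,\beta}(f)(t,\cdot))(x)$ as a $C([1/N,N],\gamma(H,\B))$-valued Calder\'on--Zygmund operator, proves the standard kernel estimates \eqref{H16}--\eqref{H17} (which are exactly your uniform-in-$s$ bounds), and invokes the vector-valued CZ machinery to get boundedness from $H^1((0,\infty),\B)$ into $L^1((0,\infty),C([1/N,N],\gamma(H,\B)))$, followed by a careful identification of the CZ extension with the concrete operator and a limiting argument $N\to\infty$. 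The special atoms $(Ai)$ are then treated separately via \eqref{H19}, exactly as you do. You instead bypass the abstract CZ step and carry out the local--global split directly on a single atom, extracting the $(Aii)$ global decay from the cancellation $\int_Ia=0$ together with the $\partial_y$-estimate by hand. Your argument is shorter and more elementary; the paper's is more systematic, reusing the CZ framework of Lemma~\ref{Lemap8} and giving the $L^1\to L^{1,\infty}$ bound for $Z_\lambda^\beta$ as a byproduct. One small point: you justify the semigroup identity via $P_s^\lambda P_t^\lambda=P_{t+s}^\lambda$ and Fubini, whereas the paper obtains it first on $S_\lambda(0,\infty)\otimes\B$ via Hankel transforms and then extends; both are valid.
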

\begin{proof}
Note firstly that $P^\lambda_* \circ G_\B^{\lambda,\beta}$ is bounded
from $H_{\rm o}^1((0,\infty ), \B)$ into $L^{1,\infty}(0,\infty)$.

According to \cite[Lemma 3.1]{BCR3}, we have that, for every $\phi \in S_\lambda(0,\infty)$,
$$h_\lambda \left( t^\beta \partial_t^\beta P_t^\lambda(\phi) \right)
    =e^{i\beta \pi} (ty)^\beta e^{-yt} h_\lambda(\phi)(y), \quad t>0.$$
Since $h_\lambda$ is an isometry in $L^2(0,\infty)$, it follows that $t^\beta \partial_t^\beta P_t^\lambda(\phi) \in L^2(0,\infty)$, for every $\phi \in S_\lambda(0,\infty)$
and $t>0$, and by \cite [\S 8.5 (19)]{Erdelyi} , we get
$$h_\lambda \left( P_s^\lambda \left[t^\beta \partial_t^\beta P_t^\lambda(\phi) \right]\right)(x)
    =e^{i\pi\beta } (tx)^\beta e^{-x(t+s)} h_\lambda(\phi)(x),
    \quad \phi \in S_\lambda(0,\infty) \text{ and } t,s,x \in (0,\infty),$$
and then, for every $\phi \in S_\lambda(0,\infty)$ and $t,s,x \in (0,\infty)$,
\begin{align*}
    P_s^\lambda \left[t^\beta \partial_t^\beta P_t^\lambda(\phi) \right](x)
        & = t^\beta \partial_t^\beta P_{t+s}^\lambda (\phi)(x)
         = \int_0^\infty t^\beta \partial_t^\beta P_{t+s}^\lambda(x,y) \phi(y) dy.
\end{align*}
Also, for every $\phi \in S_\lambda(0,\infty) \otimes \B$,
$$P_s^\lambda (G_\B^{\lambda ,\beta}(\phi)(t,\cdot ))(x)=P_s^\lambda \left[t^\beta \partial_t^\beta P_t^\lambda(\phi) \right](x)
     = t^\beta \partial_t^\beta P_{t+s}^\lambda (\phi)(x), \quad t,s,x \in (0,\infty).$$
By defining the function
$$K_\lambda^{\beta}(t,s;x,y)
    = t^\beta \partial_t^\beta P_{t+s}^\lambda(x,y), \quad t,s,x,y \in (0,\infty),$$
we have that
\begin{equation}\label{H16}
    \|K_\lambda^{\beta}(\cdot,\cdot;x,y)\|_{L^\infty((0,\infty),H)}
        \leq \frac{C}{|x-y|}, \quad x,y \in (0,\infty), \ x \neq y.
\end{equation}
Indeed, as in the proof of \eqref{H4}, by using (\ref{8prima}) we get
\begin{align*}
 &\left\| K_\lambda^\beta( \cdot , \cdot;x,y) \right\|_{L^\infty((0,\infty),H)} =\sup_{s>0}\|t^\beta \partial _t^\beta P_{t+s}^\lambda (x,y)\|_H\\
&\quad \leq C \sup_{s>0}\int_0^\infty u^{m-\beta-1}\left( \int_0^\infty \frac{t^{2\beta-1}}{(t+s+u+|x-y|)^{2m+2}} dt \right)^{1/2}du \\
    & \quad \leq C \sup_{s>0}\frac{1}{s+|x-y|} \leq \frac{C}{|x-y|}, \quad x,y \in (0,\infty), \ x \neq y.
\end{align*}
Also, we get, for every $x,y \in (0,\infty), \ x \neq y$,
\begin{equation}\label{H17}
    \|\partial_x K_\lambda^{\beta}(\cdot,\cdot;x,y)\|_{L^\infty((0,\infty),H)} +  \|\partial_y K_\lambda^{\beta}(\cdot,\cdot;x,y)\|_{L^\infty((0,\infty),H)}
        \leq \frac{C}{|x-y|^2} .
\end{equation}

According to \eqref{H16}, for every $\phi \in S_\lambda(0,\infty)$, the integral
$$\int_0^\infty K_\lambda^\beta(\cdot,\cdot;x,y) \phi(y)dy$$
is convergent in the $L^\infty((0,\infty),H)$-Bochner sense provided that $x \notin \supp \phi$.

Let $N \in \N$. We define the operator $Q_{\lambda,N}^\beta$ as follows:
$$Q_{\lambda,N}^\beta(\phi)(x)
    = \int_0^\infty K_\lambda^\beta(\cdot,\cdot;x,y) \phi(y)dy, \quad \phi \in S_\lambda(0,\infty) \text{ and } x \notin \supp \phi,$$
where the integral is understood in the $C([1/N,N],H)$-Bochner sense. Here, by $C([1/N,N],H)$ we denote the space of
$H$-valued continuous functions on $[1/N,N]$.

Let $\phi \in S_\lambda(0,\infty)$. We have that
$$\left[Q_{\lambda,N}^\beta(\phi)(x)\right](s)
    = \int_0^\infty K_\lambda^\beta(\cdot,s;x,y) \phi(y)dy, \quad  x \notin \supp \phi \text{ and } s \in [1/N,N],$$
where the integral is understood in the $H$-Bochner sense and the equality is understood in $H$.

Moreover, according to some properties of Bochner integration and by applying Fubini's theorem, we get
\begin{align*}
    & \int_0^\infty \left[\left[Q_{\lambda,N}^\beta(\phi)(x)\right](s)\right](t) h(t) \frac{dt}{t}
        = \int_0^\infty \phi(y) \int_0^\infty K_\lambda^\beta(t,s;x,y) h(t) \frac{dt}{t} dy,\\
    & \qquad = \int_0^\infty  h(t)  \int_0^\infty K_\lambda^\beta(t,s;x,y) \phi(y)dy\frac{dt}{t},  \quad  x \notin \supp \phi \text{ and } s \in [1/N,N].
\end{align*}
Then,
$$\left[\left[Q_{\lambda,N}^\beta(\phi)(x)\right](s)\right](t)
    = \int_0^\infty K_\lambda^\beta(t,s;x,y) \phi(y)dy ,  \quad  x \notin \supp \phi \text{ and } s \in [1/N,N],$$
as elements of $H$.

We define $Q_{\lambda,N}^\beta$ on $S_\lambda(0,\infty) \otimes \B$ in the natural way. For every $\phi \in S_\lambda(0,\infty)\otimes \B$ we have that
\begin{equation}\label{H18}
    \left[\left[Q_{\lambda,N}^\beta(\phi)(x)\right](s)\right](t)
        = P_s^\lambda \left( G_\B^{\lambda,\beta}(\phi)(t,\cdot) \right)(x) ,  \quad  x \notin \supp \phi \text{ and } s \in [1/N,N],
\end{equation}
in the sense of equality in $\gamma(H,\B)$.

Since $P_*^\lambda$ is bounded from $L^2((0,\infty),\gamma(H,\B))$ into $L^2(0,\infty )$ and $G_\B^{\lambda,\beta}$ is bounded from
$L^2((0,\infty),\B)$ into $L^2((0,\infty),\gamma(H,\B))$ (Theorem \ref{Th:B}), the operator $P_*^\lambda \circ G_\B^{\lambda,\beta}$ is bounded from
$L^2((0,\infty),\B)$ into $L^2(0,\infty)$. Hence, the operator
$$Z_{\lambda,N}^\beta(f)(t,s;x)
    = P_s^\lambda \left( G_\B^{\lambda,\beta}(f)(t,\cdot) \right)(x), \quad t,x \in (0,\infty), \ s \in [1/N,N],$$
is bounded from $L^2((0,\infty),\B)$ into $L^2((0,\infty), C([1/N,N],\gamma(H,\B)))$. Moreover,
$$\sup_{M \in  \N} \| Z_{\lambda,M}^\beta\|_{L^2((0,\infty),\B) \to L^2((0,\infty), C([1/M,M],\gamma(H,\B)))} < \infty.$$

By taking into account \eqref{H16}, \eqref{H17} and \eqref{H18} and by using vector valued Calder\'on-Zygmund theory we conclude that the operator
$Z_{\lambda,N}^\beta$ can be extended to $L^1((0,\infty),\B)$ as a bounded operator from $L^1((0,\infty),\B)$ into $L^{1,\infty}((0,\infty), C([1/N,N],\gamma(H,\B)))$ and
to $H^1((0,\infty),\B)$ as a bounded operator from $H^1((0,\infty),\B)$ into $L^{1}((0,\infty), C([1/N,N],\gamma(H,\B)))$ .
We denote by $\widetilde{Z}_{\lambda,N}^\beta$ to this extension of $Z_{\lambda,N}^\beta$ to $L^1((0,\infty),\B)$. It has that
$$
\sup_{M \in  \N} \| \widetilde{Z}_{\lambda,M}^\beta\|_{L^1((0,\infty),\B) \to L^{1,\infty}((0,\infty), C([1/M,M],\gamma(M,\B)))} < \infty,
$$
and
$$
\sup_{M \in  \N} \| \widetilde{Z}_{\lambda,M}^\beta\|_{H^1((0,\infty),\B) \to L^{1}((0,\infty), C([1/M,M],\gamma(H,\B)))} < \infty.
$$
Our objective now is to show that
$$\widetilde{Z}_{\lambda,N}^\beta (f)
    = P_s^\lambda \left( G_\B^{\lambda,\beta}(f)\right), \quad f \in L^1((0,\infty),\B),$$
as elements in $L^{1,\infty }((0,\infty),C([1/N,N],\gamma(H,\B)))$.

Let $f \in L^1((0,\infty),\B)$. We choose a sequence $(f_k)_{k\in \N}\subset S_\lambda(0,\infty) \otimes \B$ such that
$$f_k \longrightarrow f, \quad \text{as } k \to \infty, \text{ in } L^1((0,\infty),\B).$$
Then,
$$Z_{\lambda,N}^\beta(f_k) \longrightarrow \widetilde{Z}_{\lambda,N}^\beta(f),
    \quad \text{as } k \to \infty, \text{ in } L^{1,\infty}((0,\infty),C([1/N,N],\gamma(H,\B))).$$

It is not hard to see that, for every $t,x \in (0,\infty)$ and $s \in [1/N,N]$,
$$Z_{\lambda,N}^\beta(f)(t,s;x)
    = \int_0^\infty K_\lambda^\beta(t,s;x,y)f(y)dy
    = G_\B^{\lambda,\beta}(P_s^\lambda (f))(t,x).$$
We know that, for every $s \in (0,\infty)$, $P_s^\lambda$ is a bounded operator from $L^1((0,\infty),\B)$ into itself,
and that the operator $G_\B^{\lambda,\beta}$ is bounded from $L^1((0,\infty),\B)$ into $L^{1,\infty}((0,\infty),\gamma(H,\B))$ (Lemma \ref{Lemap8}).
Then, it follows that, for every $s \in [1/N,N]$,
$$Z_{\lambda,N}^\beta(f_k)(\cdot,s;\cdot) \longrightarrow Z_{\lambda,N}^\beta(f)(\cdot,s;\cdot),
    \quad \text{as } k \to \infty, \text{ in } L^{1,\infty}((0,\infty),\gamma(H,\B)).$$
Hence, we can find an increasing sequence $(n_k)_{k \in \N} \subset \N$  and a subset $\Omega$ of $(0,\infty)$
such that $|(0,\infty )\setminus \ \Omega|=0$, and
$$Z_{\lambda,N}^\beta(f_{n_k})(\cdot,s;x) \longrightarrow Z_{\lambda,N}^\beta(f)(\cdot,s;x),
    \quad \text{as } k \to \infty, \text{ in } \gamma(H,\B),$$
for every $x \in \Omega$ and $s \in \Q \cap [1/N,N]$. Here $\Omega$ and $(n_k)_{k \in \N}$ do not depend on $N$.

Also, there exists an increasing sequence $(k_j)_{j \in \N} \subset \N$ and a subset $W$ of $\Omega$ with $|\Omega |=|W|$, such that
$$Z_{\lambda,N}^\beta(f_{n_{k_j}})(\cdot,s;x) \longrightarrow [\widetilde{Z}_{\lambda,N}^\beta(f)(x)](s),
    \quad \text{as } j \to \infty, \text{ in } \gamma(H,\B),$$
for every $x \in W$ and $s \in \Q \cap [1/N,N]$. Again, $W$ and $(k_j)_{j \in \N}$ do not depend on $N$.

We conclude that
$$Z_{\lambda,N}^\beta(f)(\cdot,s,x)
    = [\widetilde{Z}_{\lambda,N}^\beta(f)(x)](s), \quad x \in W, \ s \in \Q \cap [1/N,N].$$
This equality is understood in $\gamma(H,\B)$.

Hence, we can write
\begin{align*}
    & \left| \left\{ x \in (0,\infty) : P_*^\lambda( G_\B^{\lambda,\beta}(f))(x) > \alpha \right\} \right| \\
    & \qquad = \left| \bigcup_{M \in \N}\left\{ x \in (0,\infty) : \sup_{s \in [1/M,M]} \left\|P_s^\lambda ( G_\B^{\lambda,\beta}(f))(x) \right\|_{\gamma(H,\B)} > \alpha \right\} \right| \\
    & \qquad \leq \lim_{M \to \infty} \left| \left\{ x \in (0,\infty) : \sup_{s \in [1/M,M] \cap \Q} \left\|P_s^\lambda (G_\B^{\lambda,\beta}(f))(x) \right\|_{\gamma(H,\B)} > \alpha \right\} \right| \\
    & \qquad \leq \lim_{M \to \infty} \left| \left\{ x \in W : \sup_{s \in [1/M,M] \cap \Q} \left\|\widetilde{Z}_{\lambda,M}^\beta (f)(\cdot,s,x) \right\|_{\gamma(H,\B)} > \alpha \right\} \right| \\
    & \qquad \leq \frac{C}{\alpha} \|f\|_{L^1((0,\infty),\B)}, \quad \alpha>0.
\end{align*}
Thus, we prove that the operator $Z_\lambda^\beta$ defined by
$$Z_{\lambda}^\beta(f)(t,s;x)
    = P_s^\lambda \left(G_\B^{\lambda,\beta}(f)(t,\cdot )\right)(x), \quad s,t,x \in (0,\infty),$$
is bounded from $L^1((0,\infty),\B)$ into $L^{1,\infty }((0,\infty), L^\infty((0,\infty),\gamma(H,\B)))$.

By proceeding in a similar way, since $L^\infty_c(0,\infty) \otimes \B$ is a dense subset of
$H^1((0,\infty),\B)$, we can see that $Z_{\lambda}^\beta$ defines a bounded operator from
$H^1((0,\infty),\B)$ into $L^1((0,\infty), L^\infty((0,\infty),\gamma(H,\B)))$. Here $L^\infty_c(0,\infty)$ represents the space of bounded measurable functions with compact support in $(0,\infty )$.

Thus, if $a$ is a $2$-atom satisfying $(Aii)$ we get that
$$\left\| P_s^\lambda \left(G_\B^{\lambda,\beta}(a)\right) \right\|_{L^1((0,\infty), L^\infty((0,\infty),\gamma(H,\B)))}
    \leq C,$$
where $C$ does not depend on $a$.

On the other hand, by using \eqref{H2} it follows that
\begin{equation}\label{H19}
    \|K_\lambda^\beta(\cdot,\cdot;x,y)\|_{L^\infty((0,\infty),H)}
        \leq C \frac{(xy)^\lambda}{|x-y|^{2\lambda+1}}, \quad x,y \in (0,\infty), \ x \neq y.
\end{equation}
The, since the operator $Z_{\lambda}^\beta$ is bounded from $L^2((0,\infty),\B)$ into
$L^2((0,\infty), L^\infty((0,\infty),\gamma(H,\B)))$, by proceeding as in the proof of \eqref{H15}
we can deduce that there exists $C>0$ such that, for every $\delta>0$, and $b\in \B$, $\|b\|_\B=1$,
$$\left\| P_s^\lambda \left(G_\B^{\lambda,\beta}(a)\right) \right\|_{L^1((0,\infty), L^\infty((0,\infty),\gamma(H,\B)))}
    \leq C,$$
when $a=b\chi_{(0,\delta)}/\delta$.

We conclude that, for every $f \in H_o^1((0,\infty ),\B)$,
$$\|P_*^\lambda(G_\B^{\lambda,\beta}(f))\|_{L^1(0,\infty )}
    \leq C \|f\|_{H_{\rm o}^1((0,\infty ),\B)}.$$
\end{proof}

%\newpage
\subsection{}\label{subsec:2.2}
We show now that $G_\B^{\lambda,\beta}$ is bounded from $BMO_{\rm o}((0,\infty ),\B)$ into $BMO_{\rm o}((0,\infty ),\gamma(H,\B))$. This requires verifying the corresponding vector-valued conditions ($Bi$) and ($Bii$) which we collected in Lemma \ref{Lemap17} and Lemma \ref{Lemap19}, respectively.

\begin{Lem}\label{Lemap17}
Consider $\B$ a UMD Banach space and $\beta,\lambda >0$. There exists $C>0$ such that, for every $r>0$,
\begin{equation}\label{H22}
\frac{1}{r}\int_0^r\|G_\B^{\lambda ,\beta}(f)(\cdot ,x)\|_{\gamma (H,\B)}dx\leq C\|f\|_{BMO_{\rm o}((0,\infty ),\B)},\quad f\in BMO_{\rm o}((0,\infty ),\B).
\end{equation}
\end{Lem}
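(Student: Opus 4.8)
The plan is to prove \eqref{H22} by the usual splitting of the datum relative to the interval $I=(0,r)$. Fix $r>0$ and $f\in BMO_{\rm o}((0,\infty),\B)$, and write $f=f_1+f_2$ with $f_1=f\chi_{(0,2r)}$ and $f_2=f\chi_{(2r,\infty)}$, so that $G_\B^{\lambda,\beta}(f)=G_\B^{\lambda,\beta}(f_1)+G_\B^{\lambda,\beta}(f_2)$. Before anything else one has to give meaning to $G_\B^{\lambda,\beta}(f)$ when $f\in BMO_{\rm o}((0,\infty),\B)$: condition $(Bi)$ gives $\int_0^R\|f(y)\|_\B\,dy\leq CR\|f\|_{BMO_{\rm o}}$ for all $R>0$, while, by \eqref{H2}, for fixed $t,x$ the kernel $P_t^\lambda(x,y)$ decays like $y^{-\lambda-1}$ as $y\to\infty$; a dyadic decomposition of $(0,\infty)$ then makes $P_t^\lambda(f)(x)=\int_0^\infty P_t^\lambda(x,y)f(y)\,dy$ absolutely convergent for $\lambda>0$, and the same bound handles $t^\beta\partial_t^\beta P_t^\lambda(f)(x)$ through the Segovia--Wheeden formula for $\partial_t^\beta$.

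For the local piece I would use the $L^2$-boundedness of $G_\B^{\lambda,\beta}$ from Theorem~\ref{Th:B}: by H\"older's inequality,
$$
\frac{1}{r}\int_0^r\big\|G_\B^{\lambda,\beta}(f_1)(\cdot,x)\big\|_{\gamma(H,\B)}\,dx
 \leq \frac{1}{r^{1/2}}\big\|G_\B^{\lambda,\beta}(f_1)\big\|_{L^2((0,\infty),\gamma(H,\B))}
 \leq \frac{C}{r^{1/2}}\|f\|_{L^2((0,2r),\B)},
$$
and then reduce matters to $\|f\|_{L^2((0,2r),\B)}\leq Cr^{1/2}\|f\|_{BMO_{\rm o}((0,\infty),\B)}$. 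This last estimate I would deduce from the vector-valued John--Nirenberg inequality: passing to the odd extension $f_{\rm o}\in BMO(\R,\B)$, whose average over the symmetric interval $(-2r,2r)$ vanishes, one has $\frac{1}{4r}\int_{-2r}^{2r}\|f_{\rm o}(x)\|_\B^2\,dx\leq C\|f_{\rm o}\|_{BMO(\R,\B)}^2=C\|f\|_{BMO_{\rm o}((0,\infty),\B)}^2$, which is exactly what is needed (this also shows $f_1\in L^2((0,\infty),\B)$, so that Theorem~\ref{Th:B} applies).

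For the global piece, fix $x\in(0,r)$. Since $x$ is separated from $\supp f_2\subset(2r,\infty)$, arguing as in the derivation of \eqref{H5} (now with a Fubini argument justified by the convergence discussed above) one writes $G_\B^{\lambda,\beta}(f_2)(\cdot,x)=\int_{2r}^\infty K_\lambda^\beta(\cdot;x,y)f(y)\,dy$ in $\gamma(H,\B)$, and, using $\gamma(H,\C)=H$ and the kernel bound \eqref{Kb},
$$
\big\|G_\B^{\lambda,\beta}(f_2)(\cdot,x)\big\|_{\gamma(H,\B)}
 \leq C\int_{2r}^\infty\frac{(xy)^\lambda}{|x-y|^{2\lambda+1}}\,\|f(y)\|_\B\,dy
 \leq Cx^\lambda\int_{2r}^\infty\frac{\|f(y)\|_\B}{y^{\lambda+1}}\,dy,
$$
since $|x-y|=y-x\geq y/2$ when $y>2r>2x$. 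Averaging over $x\in(0,r)$ produces the factor $\frac{1}{r}\int_0^r x^\lambda\,dx\leq Cr^\lambda$, so it remains to bound $\int_{2r}^\infty\|f(y)\|_\B\,y^{-\lambda-1}\,dy$ by $Cr^{-\lambda}\|f\|_{BMO_{\rm o}}$; this I would do by splitting $(2r,\infty)=\bigcup_{j\geq1}[2^jr,2^{j+1}r)$, using $y^{-\lambda-1}\leq(2^jr)^{-\lambda-1}$ on each annulus and condition $(Bi)$ for the interval $(0,2^{j+1}r)$, which gives a convergent geometric series $\sum_j2^{-j\lambda}$ (here $\lambda>0$ enters). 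Combining the two pieces yields \eqref{H22}.

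The step I expect to be the main obstacle is not the two estimates above, which are routine once Theorem~\ref{Th:B}, John--Nirenberg and the kernel bounds of Lemma~\ref{Lemap8} are in hand, but rather the bookkeeping at the start: rigorously defining $G_\B^{\lambda,\beta}$ on $BMO_{\rm o}((0,\infty),\B)$ and, for such rough data, justifying the kernel representation of $G_\B^{\lambda,\beta}(f_2)$ used in the global estimate --- in particular the interchange of the $s$-integration in the fractional derivative, the $y$-integration, and the passage to a $\gamma(H,\B)$-valued Bochner integral --- using only the linear growth of the $L^1$-averages of $\|f\|_\B$.
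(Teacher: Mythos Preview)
Your proposal is correct and follows essentially the same route as the paper: split $f=f\chi_{(0,2r)}+f\chi_{(2r,\infty)}$, handle the local piece by Theorem~\ref{Th:B} together with John--Nirenberg, and the far piece by the kernel bound \eqref{Kb} plus a dyadic summation using condition $(Bi)$. The bookkeeping you flag as the main obstacle---making sense of $G_\B^{\lambda,\beta}(f)$ for $BMO_{\rm o}$ data and justifying the kernel representation---is exactly what the paper does first, via the explicit estimate \eqref{H19A}, which shows the absolute convergence of all the relevant integrals from the linear growth of $\int_0^R\|f\|_\B$; the only cosmetic difference is that the paper bounds the far piece pointwise in $x\in(0,r)$ (using $x^\lambda\leq r^\lambda$) rather than averaging first.
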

\begin{proof}
Assume that $f \in BMO_{\rm o}((0,\infty ),\B)$. According to \eqref{H2}, for every $k \in \N$, we have that
$$\left| \partial_t^k P_t^\lambda(x,y) \right|
    \leq C \frac{(xy)^\lambda}{(t+|x-y|)^{2\lambda+1+k}}
    \leq \frac{C}{t^k} \frac{(xy)^\lambda}{(t+|x-y|)^{2\lambda+1}},\quad t,x,y \in (0,\infty).$$
Then, for every
\begin{align}\label{H19A}
    & \int_0^\infty \left| \partial_t^k P_t^\lambda(x,y) \right| \|f(y)\|_\B dy
        \leq \frac{C}{t^k} \int_0^\infty  \frac{(xy)^\lambda}{(t+|x-y|)^{2\lambda+1}} \|f(y)\|_\B dy \nonumber \\
    & \qquad \leq \frac{C}{t^k} \left\{ \int_0^{2x}  \frac{x^{2\lambda}}{t^{2\lambda+1}} \|f(y)\|_\B dy
                                      + \int_{2x}^\infty \frac{(xy)^\lambda}{y^{2\lambda+1}} \|f(y)\|_\B dy \right\} \nonumber \\
    & \qquad \leq \frac{C}{t^k} \left\{  \left( \frac{x}{t} \right)^{2\lambda+1}\|f\|_{BMO_{\rm o}((0,\infty ),\B)}
                                      + \sum_{j=1}^\infty x^\lambda \int_{2xj^{2/\lambda}}^{2x(j+1)^{2/\lambda}} \frac{1}{y^{\lambda+1}} \|f(y)\|_\B dy \right\} \nonumber \\
    & \qquad \leq \frac{C}{t^k} \left\{  \left( \frac{x}{t} \right)^{2\lambda+1}\|f\|_{BMO_{\rm o}((0,\infty ),\B)}
                                      + \sum_{j=1}^\infty \frac{x^\lambda}{(xj^{2/\lambda})^{\lambda+1}} \int_0^{2x(j+1)^{2/\lambda}} \|f(y)\|_\B dy \right\} \nonumber \\
    & \qquad \leq \frac{C}{t^k} \left\{  \left( \frac{x}{t} \right)^{2\lambda+1}
                                      + \sum_{j=1}^\infty \frac{1}{j^2} \right\} \|f\|_{BMO_o((0,\infty ),\B)}, \quad t,x \in (0,\infty) \text{ and } k \in \N .
\end{align}
We can write
$$\partial_t^k P_t^\lambda(f)(x)
    = \int_0^\infty \partial_t^k P_t^\lambda(x,y) f(y)dy, \quad t,x \in (0,\infty) \text{ and } k \in \N.$$
By using again \eqref{H2}, if $m \in \N$ such that $m-1 \leq \beta < m$, we get
\begin{align*}
    & \int_0^\infty s^{m-\beta-1} \int_0^\infty \left| \partial_t^m P_{t+s}^\lambda(x,y) \right| \|f(y)\|_\B dy ds \\
    & \qquad \leq C \|f\|_{BMO_{\rm o}(\R,\B)} \int_0^\infty \frac{s^{m-\beta-1}}{(t+s)^m}  \left\{ \left( \frac{x}{t+s}\right)^{2\lambda+1} +1 \right\}ds
    < \infty, \quad t,x \in (0,\infty).
\end{align*}
This leads to
$$G_\B^{\lambda,\beta}(f)(t,x)
    = \int_0^\infty K_\lambda^\beta(t;x,y) f(y)dy, \quad t,x \in (0,\infty),$$
where $K_\lambda^\beta(t;x,y)= t^\beta \partial_t^\beta P_t^\lambda(x,y)$, $t,x,y \in (0,\infty)$.

Let $r>0$. We split $G_\B^{\lambda,\beta}(f)(t,x)$ as follows
$$G_\B^{\lambda,\beta}(f)(t,x)
    = G_{\B,1}^{\lambda,\beta}(f)(t,x) + G_{\B,2}^{\lambda,\beta}(f)(t,x), \quad t,x \in (0,\infty),$$
being
$$G_{\B,1}^{\lambda,\beta}(f)(t,x)
    = \int_0^{2r} K_\lambda^\beta(t;x,y) f(y)dy, \quad t,x \in (0,\infty). $$
Since $G_\B^{\lambda,\beta}$ is a bounded operator from $L^2((0,\infty),\B)$ into $L^2((0,\infty),\gamma(H,\B))$
(Theorem \ref{Th:B}), we obtain
\begin{align}\label{H20}
     \frac{1}{r} \int_0^r \| G_{\B,1}^{\lambda,\beta}(f)(\cdot,x) \|_{\gamma(H,\B)} dx
        & \leq \left( \frac{1}{r} \int_0^\infty \| G_{\B}^{\lambda,\beta}(f \chi_{(0,2r)})(\cdot,x) \|_{\gamma(H,\B)}^2 dx \right)^{1/2} \nonumber \\
        & \leq C \left( \frac{1}{r} \int_0^{2r} \| f(y) \|_{\B}^2 dy \right)^{1/2}
          \leq C \|f\|_{BMO_o((0,\infty ),\B)}.
\end{align}
Note that the last inequality follows from John-Niremberg's property.

If $h \in H$, by using \eqref{Kb} and proceeding as in (\ref{H19A}) we have
\begin{align*}
    & \int_0^\infty \int_{2r}^\infty |K_\lambda^\beta(t;x,y)| \|f(y)\|_{\B} dy |h(t)| \frac{dt}{t}
        \leq \|h\|_H \int_{2r}^\infty \|K_\lambda^\beta(\cdot ;x,y)\|_H \|f(y)\|_\B dy \\
    & \qquad \leq C \|h\|_H \int_{2r}^\infty  \frac{(xy)^\lambda}{|x-y|^{2\lambda+1}} \|f(y)\|_\B dy
        \leq C \|h\|_H x^\lambda \int_{2r}^\infty  \frac{1}{y^{\lambda+1}} \|f(y)\|_\B dy \\
    & \qquad \leq C \|h\|_H \|f\|_{BMO_{\rm o}((0,\infty ),\B)}, \quad x \in  (0,r).
\end{align*}
Then, if $(h_j)_{j=1}^n$ is a set of orthonormal functions in $H$, we can write
\begin{align*}
    & \left( \E \left\| \sum_{j=1}^n \gamma_j \int_0^\infty h_j(t) \int_{2r}^\infty K_\lambda^\beta(t;x,y) f(y) dydt\right\|_\B^2 \right)^{1/2} \\
    & \qquad = \left( \E \left\|  \int_{2r}^\infty f(y) \sum_{j=1}^n \gamma_j \int_0^\infty  K_\lambda^\beta(t;x,y) h_j(t)dt dy\right\|_\B^2 \right)^{1/2} \\
    & \qquad \leq \int_{2r}^\infty \|f(y)\|_\B \left( \E \left| \sum_{j=1}^n \gamma_j \int_0^\infty  K_\lambda^\beta(t;x,y) h_j(t)dt \right|^2 \right)^{1/2}dy \\
    & \qquad \leq \int_{2r}^\infty \|f(y)\|_\B \|K_\lambda^\beta(\cdot;x,y)\|_{\gamma(H,\C)}dy.
\end{align*}
Since $\gamma(H,\C)=H$ and again by (\ref{Kb}) we get, for each $x \in (0,r)$,
\begin{align*}
     \| G_{\B,2}^{\lambda,\beta}(f)\|_{\gamma (H,\mathbb{B})}&=\left\| \int_{2r}^\infty K_\lambda^\beta(\cdot;x,y) f(y) dy \right\|_{\gamma(H,\B)}
        \leq C \int_{2r}^\infty \|f(y)\|_{\B}\| K_\lambda^\beta(\cdot;x,y)\|_{H}  dy
        \leq C \|f\|_{BMO_{\rm o}((0,\infty ),\B)} .
\end{align*}
Hence,
\begin{equation}\label{H21}
    \frac{1}{r} \int_0^r \| G_{\B,2}^{\lambda,\beta}(f)(\cdot,x) \|_{\gamma(H,\B)} dx
         \leq C \|f\|_{BMO_{\rm o}((0,\infty ),\B)}.
\end{equation}
From \eqref{H20} and \eqref{H21} we conclude the proof of this Lemma.
\end{proof}

 Note that \eqref{H22} implies that $\| G_{\B}^{\lambda,\beta}(f)(\cdot,x) \|_{\gamma(H,\B)}< \infty$, a.e. $x \in (0,\infty).$

\begin{Lem}\label{Lemap19}
Let $\B$ be a UMD Banach space and $\beta,\lambda >0$. The operator $G_\B^{\lambda,\beta}$ is bounded from $BMO_{\rm o}((0,\infty ),\B)$ into $BMO((0,\infty), \gamma (H,\B))$.
\end{Lem}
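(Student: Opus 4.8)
The plan is to verify the bounded mean oscillation condition $(Bii)$ for the $\gamma(H,\B)$-valued function $G_\B^{\lambda,\beta}(f)$: for every interval $I=(a,b)\subset(0,\infty)$ one must exhibit $m_I\in\gamma(H,\B)$ with $\frac1{|I|}\int_I\|G_\B^{\lambda,\beta}(f)(\cdot,x)-m_I\|_{\gamma(H,\B)}\,dx\leq C\|f\|_{BMO_{\rm o}((0,\infty),\B)}$. Throughout I will use the pointwise representation $G_\B^{\lambda,\beta}(f)(t,x)=\int_0^\infty K_\lambda^\beta(t;x,y)f(y)\,dy$ established in the proof of Lemma~\ref{Lemap17}, and I split the argument according to the position of $I$ relative to the origin. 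If $a\leq|I|$ then $I\subset(0,2|I|)$, so taking $m_I=0$ and applying Lemma~\ref{Lemap17} with $r=2|I|$ immediately gives $\frac1{|I|}\int_I\|G_\B^{\lambda,\beta}(f)(\cdot,x)\|_{\gamma(H,\B)}\,dx\leq C\|f\|_{BMO_{\rm o}((0,\infty),\B)}$, which dominates the oscillation.

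The substantial case is $a>|I|$. Put $\ell=|I|$, let $x_I$ be the centre of $I$ (so that $x_I$, $a$, $b$ are mutually comparable and all $\geq\ell$), set $\tilde I=(a-\ell,b+\ell)\subset(0,\infty)$, and decompose $f=(f-f_{\tilde I})\chi_{\tilde I}+(f-f_{\tilde I})\chi_{(0,\infty)\setminus\tilde I}+f_{\tilde I}=:f_1+f_2+f_3$; as $m_I$ I take the sum of the $I$-average of $G_\B^{\lambda,\beta}(f_1)$, the value at $x_I$ of $G_\B^{\lambda,\beta}(f_2)$, and the $I$-average of $G_\B^{\lambda,\beta}(f_3)$. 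For $f_1$ the John--Nirenberg inequality (applicable since $(Bii)$ says $f\in BMO((0,\infty))$) gives $\frac1{|\tilde I|}\int_{\tilde I}\|f-f_{\tilde I}\|_\B^2\,dy\leq C\|f\|_{BMO((0,\infty),\B)}^2$, whence the $L^2$-boundedness of $G_\B^{\lambda,\beta}$ from Theorem~\ref{Th:B} and Cauchy--Schwarz yield $\frac1{|I|}\int_I\|G_\B^{\lambda,\beta}(f_1)(\cdot,x)\|_{\gamma(H,\B)}\,dx\leq C\|f\|_{BMO((0,\infty),\B)}$.

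For $f_2$, for $x,x'\in I$ one has $G_\B^{\lambda,\beta}(f_2)(\cdot,x)-G_\B^{\lambda,\beta}(f_2)(\cdot,x')=\int_{(0,\infty)\setminus\tilde I}\big[K_\lambda^\beta(\cdot;x,y)-K_\lambda^\beta(\cdot;x',y)\big](f(y)-f_{\tilde I})\,dy$, and one splits $(0,\infty)\setminus\tilde I$ into the dyadic annuli $(2^{j+1}\tilde I\setminus2^j\tilde I)\cap(0,\infty)$. On the annuli comparable to $x_I$ one uses the smoothness estimate $\|\partial_xK_\lambda^\beta(\cdot;\xi,y)\|_H\leq C|\xi-y|^{-2}$, which follows from \eqref{I1I2} and \eqref{H6} (in that range the factors $\xi^{\lambda-1}$, $y^\lambda$ occurring in \eqref{I1I2} are comparable to powers of $|\xi-y|$), together with the standard telescoping bound stating that the average of $\|f-f_{\tilde I}\|_\B$ over $2^j\tilde I$ is at most $C(1+j)\|f\|_{BMO((0,\infty),\B)}$; this produces the convergent series $\sum_j(1+j)2^{-j}$. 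On the pieces where $y\leq x_I/2$ or $y\geq2x_I$ one instead invokes the sharper smoothness estimates retaining the $(\xi y)^\lambda$-decay (obtained from \eqref{I1I2} in the same way \eqref{Kb} is obtained from \eqref{H4}), which combined with the factor $|x-x'|\leq\ell$ yield a gain $\ell/x_I$; bounding the remaining $y$-integrals of $\|f\|_\B$ by condition $(Bi)$ and using $\|f_{\tilde I}\|_\B\leq C(1+\log(x_I/\ell))\|f\|_{BMO_{\rm o}((0,\infty),\B)}$, the factor $\ell/x_I$ absorbs the logarithm. Finally, for $f_3=f_{\tilde I}$ one has $G_\B^{\lambda,\beta}(f_3)(t,x)=f_{\tilde I}\,t^\beta\partial_t^\beta P_t^\lambda(1)(x)$, and one checks that the functions $t\mapsto t^\beta\partial_t^\beta P_t^\lambda(1)(x)$ and $t\mapsto t^\beta\partial_t^\beta\partial_xP_t^\lambda(1)(x)$ lie in $H$ with norms at most $C$ and $C/x$ respectively (from the size and smoothness estimates of $\partial_t^\beta P_t^\lambda(x,y)$ integrated in $y$, using that $P_t^\lambda(1)(x)-1\to0$ as $t\to0^+$ at a power rate), so that $\frac1{|I|}\int_I\|G_\B^{\lambda,\beta}(f_3)(\cdot,x)-(G_\B^{\lambda,\beta}(f_3))_I\|_{\gamma(H,\B)}\,dx\leq C\|f_{\tilde I}\|_\B\,\ell/x_I\leq C\|f\|_{BMO_{\rm o}((0,\infty),\B)}$.

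The main obstacle is the far case. Unlike in a Calder\'on--Zygmund argument on a bounded domain, the constant $f_{\tilde I}$ is not controlled by $\|f\|_{BMO_{\rm o}}$, only by $(1+\log(x_I/\ell))\|f\|_{BMO_{\rm o}}$; this is precisely what forces one to use the smoothness (gradient) estimates of $K_\lambda^\beta$ — and to exploit the resulting gain $\ell/x_I$ — even on the portions of the $y$-integral far from $I$, where the cruder size estimates would ordinarily suffice. The companion difficulty is the explicit control of $G_\B^{\lambda,\beta}$ acting on the constant function $1$, which, in contrast with the case $\lambda=1$, does not vanish.
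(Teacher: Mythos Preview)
Your approach is workable but takes a genuinely different route from the paper's. The paper does \emph{not} attack $G_\B^{\lambda,\beta}$ directly on $BMO$; instead it introduces the classical operator $G_\B^\beta(f_{\rm o})(t,x)=\int_\R t^\beta\partial_t^\beta P_t(x-y)f_{\rm o}(y)\,dy$ acting on the \emph{odd} extension, proves that $G_\B^\beta(f_{\rm o})$ and a localized piece $G^\beta_{\B,{\rm loc}}(f)$ differ by an $L^\infty((0,\infty),\gamma(H,\B))$ function, and then shows $G_\B^\beta(f_{\rm o})\in BMO((0,\infty),\gamma(H,\B))$ by the standard decomposition $f_{\rm o}=f_1+f_2+f_3$ around a fixed interval $I$. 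The key payoff of this detour is that the classical Poisson semigroup is conservative, so $G_\B^\beta(f_3)=G_\B^\beta(f_I)=0$: the constant piece disappears for free. The smoothness estimate used for $f_2$ is the classical one, $\|\partial_x K^\beta(\cdot;x,y)\|_H\le C|x-y|^{-2}$, valid without any restriction on $\lambda$. Finally the paper returns to $G_\B^{\lambda,\beta}$ by showing $\|G_\B^{\lambda,\beta}(f)(\cdot,x)-G^\beta_{\B,{\rm loc}}(f)(\cdot,x)\|_{\gamma(H,\B)}\le C\|f\|_{BMO_{\rm o}}$ via the kernel comparison $\|K_\lambda^\beta(\cdot;x,y)-K^\beta(\cdot;x,y)\|_H\le C/x$ on $x/2<y<2x$.

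Your direct strategy avoids the odd extension and the classical comparison, but in exchange you must (i) re-derive smoothness of the Bessel kernel $K_\lambda^\beta$ regime by regime for \emph{all} $\lambda>0$ (note that the global bound \eqref{H6} you cite was obtained in the paper only under $\lambda\ge1$; your parenthetical remark shows you are aware of this, and indeed the regime-restricted versions you need can be extracted from \eqref{I1I2}, but this is extra work), and (ii) control $G_\C^{\lambda,\beta}(1)$ explicitly. Point (ii) is the real novelty of your route: by the scaling $P_t^\lambda(1)(x)=\Psi(x/t)$ your claims $\|t^\beta\partial_t^\beta P_t^\lambda(1)(x)\|_H\le C$ and $\|\partial_x[t^\beta\partial_t^\beta P_t^\lambda(1)(x)]\|_H\le C/x$ reduce to the finiteness of two fixed $u$-integrals involving $\Psi$, which does hold (one needs $\Psi(u)=O(u^\lambda)$ as $u\to0^+$ and $1-\Psi(u)$ decaying polynomially as $u\to\infty$). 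Both approaches are valid; the paper's is cleaner precisely because the conservativity of the classical $P_t$ eliminates the constant piece, whereas yours is more self-contained but must confront the fact that $\{P_t^\lambda\}$ is not conservative head-on.
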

\begin{proof}
Let $f\in BMO_{\rm o}((0,\infty ),\B)$. We consider the odd extension function $f_{\rm o}$ of $f$ to $\R$ and
$$G_\B^\beta (f_{\rm o})(t,x)
    = \int_\R K^\beta(t;x,y) f_{\rm o}(y)dy, \quad t \in (0,\infty) \text{ and } x \in \R,$$
where
$$K^\beta(t;x,y)
    = t^\beta \partial_t^\beta P_t(x-y), \quad t \in (0,\infty) \text{ and } x,y \in \R. $$
Here $P_t(z)=t/[\pi(t^2+z^2)]$, $t \in (0,\infty)$ and $z \in \R$, is the classical Poisson semigroup.

We can write
\begin{align*}
    G_\B^\beta (f_{\rm o})(t,x)
        = & \int_0^\infty t^\beta \partial_t^\beta P_t(x-y) f(y)dy - \int_0^\infty t^\beta \partial_t^\beta P_t(x+y) f(y)dy \\
        = & \int_{x/2}^{2x} t^\beta \partial_t^\beta P_t(x-y) f(y)dy - \int_{x/2}^{2x} t^\beta \partial_t^\beta P_t(x+y) f(y)dy \\
          & + \int_0^{x/2} t^\beta \partial_t^\beta \left[P_t(x-y) - P_t(x+y) \right] f(y)dy
            + \int_{2x}^\infty t^\beta \partial_t^\beta \left[P_t(x-y) - P_t(x+y) \right] f(y)dy \\
        = & \sum_{j=1}^4 I_j(f)(t,x), \quad t,x \in (0,\infty).
\end{align*}
In \cite[Lemma 1]{BCCFR1} it was established that, if $m \in \N$ is such that $m-1 \leq \beta < m$,
$$t^\beta \partial_t^\beta P_t(z)
    = \sum_{k\in \N, \,0\leq k\leq (m+1)/2} \frac{c_k}{t} \varphi^k\left( \frac{z}{t}\right), \quad t \in (0,\infty) \text{ and } z \in \R,$$
where, for every $k \in \N$, $0 \leq k \leq (m+1)/2$, $c_k \in \C$ and
$$\varphi^k(z)
    = \int_0^\infty \frac{(1+v)^{m+1-2k}v^{m-\beta-1}}{[(1+v)^2+z^2]^{m-k+1}} dv, \quad z \in \R.$$
Let $k \in \N$ such that $0 \leq k \leq (m+1)/2$. We have that
\begin{align*}
    & \frac{1}{t}\varphi^k\left( \frac{x+y}{t}\right) - \frac{1}{t}\varphi^k\left( \frac{x-y}{t}\right)
        = t^{2(m-k)+1} \int_0^\infty (1+v)^{m+1-2k}v^{m-\beta-1}\\
    & \qquad \qquad \times \left[ \frac{1}{[(1+v)^2t^2+(x+y)^2]^{m-k+1}} - \frac{1}{[(1+v)^2t^2+(x-y)^2]^{m-k+1}} \right] dv \\
    & \qquad =t^{2(m-k)+1}  \sum_{\ell=0}^{m-k+1} {{m-k+1}\choose {\ell}}\int_0^\infty (1+v)^{m+1-2k}v^{m-\beta-1}\\
    & \qquad \qquad \times \frac{[(1+v)t]^{2(m-k+1-\ell)}[(x-y)^{2\ell}-(x+y)^{2\ell}]}{[\left((1+v)^2t^2+(x+y)^2\right)\left((1+v)^2t^2+(x-y)^2\right)]^{m-k+1}} dv, \quad t,x,y \in (0,\infty).
\end{align*}
We get
\begin{align*}
    & \left|\frac{1}{t}\varphi^k\left( \frac{x+y}{t}\right) - \frac{1}{t}\varphi^k\left( \frac{x-y}{t}\right) \right|
        \leq C y t^{2(m-k)+1} \sum_{\ell=1}^{m-k+1} x^{2\ell-1} \int_0^\infty (1+v)^{m+1-2k}v^{m-\beta-1} \\
    & \qquad \qquad \times \frac{[(1+v)t]^{2(m-k+1-\ell)}}{[(1+v)t + x]^{4(m-k+1)}} dv\\
    & \qquad \leq C y t^{2(m-k)+1} \int_0^\infty \frac{(1+v)^{m+1-2k}v^{m-\beta-1}}{[(1+v)t + x]^{2(m-k)+3}} dv, \quad t\in (0,\infty), \ 0<y<x/2.
\end{align*}
Minkowski's inequality leads to
\begin{align*}
    \left\|\frac{1}{t}\varphi^k\left( \frac{x+y}{t}\right) - \frac{1}{t}\varphi^k\left( \frac{x-y}{t}\right) \right\|_H
        \leq & C y  \int_0^\infty (1+v)^{m+1-2k}v^{m-\beta-1} \\
             & \times \left(\int_0^\infty \frac{t^{4(m-k)+1}}{[(1+v)t + x]^{4(m-k)+6}} dt \right)^{1/2} dv \\
        \leq & C \frac{y}{x^2} \int_0^\infty \frac{v^{m-\beta-1}}{(1+v)^{m}} dv \\
        \leq & C \frac{y}{x^2}, \quad 0<y<x/2.
\end{align*}
Hence, we obtain
$$ \| t^\beta \partial_t^\beta [P_t(x-y)-P_t(x+y)]\|_H
    \leq C \frac{y}{x^2}, \quad 0<y<x/2.$$
It follows that
\begin{equation}\label{H23}
    \|I_3(f)(\cdot,x)\|_{\gamma(H,\B))}
        \leq \frac{C}{x^2} \int_0^{x/2} y \|f(y)\|_\B dy
        \leq C \|f\|_{BMO_{\rm o}((0,\infty ),\B)}, \quad x \in (0,\infty).
\end{equation}
By symmetries, we also get
$$ \| t^\beta \partial_t^\beta [P_t(x-y)-P_t(x+y)]\|_H
    \leq C \frac{x}{y^2}, \quad 0<2x<y,$$
and then
\begin{align}\label{H24}
    \|I_4(f)(\cdot,x)\|_{\gamma(H,\B))}
        \leq & C x \int_{2x}^\infty \frac{\|f(y)\|_\B}{y^2} dy
        \leq C x \sum_{k=1}^\infty \int_{2k^2x}^{2(k+1)^2x} \frac{\|f(y)\|_\B}{y^2} dy \nonumber \\
        \leq & C \sum_{k=1}^\infty \frac{1}{xk^4} \int_0^{2(k+1)^2x} \|f(y)\|_\B dy
        \leq C \|f\|_{BMO_{\rm o}((0,\infty ),\B)}, \quad x \in (0,\infty).
\end{align}
Note that to establish
$$\|I_3(f)(\cdot,x)\|_{\gamma(H,\B))}
    \leq C \|f\|_{BMO_{\rm o}((0,\infty ),\B)}, \quad x \in (0,\infty),$$
it is enough to use that
$$\| t^\beta \partial_t^\beta [P_t(x-y)-P_t(x+y)]\|_H
    \leq \| t^\beta \partial_t^\beta P_t(x-y)\|_H + \| t^\beta \partial_t^\beta P_t(x+y)\|_H
    \leq \frac{C}{x}, \quad 0<y<\frac{x}{2}. $$
However, the estimation
$$\| t^\beta \partial_t^\beta [P_t(x-y)-P_t(x+y)]\|_H
    \leq \frac{C}{y}, \quad 0<2x<y, $$
does not allow to show that
$$\|I_4(f)(\cdot,x)\|_{\gamma(H,\B))}
    \leq C \|f\|_{BMO_{\rm o}((0,\infty ),\B)}, \quad x \in (0,\infty).$$

Also, we obtain
\begin{align*}
    \|t^\beta \partial_t^\beta P_t(x+y)\|_H
        \leq & C \sum_{k\in \N, 0\leq k\leq (m+1)/2} \int_0^\infty (1+v)^{m+1-2k}v^{m-\beta-1}
                    \left(\int_0^\infty \frac{t^{4(m-k)+1}}{[(1+v)t + x+y]^{4(m-k)+4}} dt \right)^{1/2} dv \\
        \leq & \frac{C}{x+y}, \quad x,y \in (0,\infty).
\end{align*}
Hence,
$$\|I_2(f)(\cdot,x)\|_{\gamma(H,\B))}
    \leq \frac{C}{x} \int_{x/2}^{2x} \|f(y)\|_\B dy
    \leq C \|f\|_{BMO_{\rm o}((0,\infty ),\B)}, \quad x \in (0,\infty).$$
We conclude that
$$G_\B^\beta(f_{\rm o})(t,x) - \int_{x/2}^{2x} t^\beta \partial_t^\beta P_t(x-y) f(y)dy \in L^\infty \left( (0,\infty), \gamma(H,\B) \right),$$
and we deduce that $G_\B^\beta(f_{\rm o}) \in BMO((0,\infty),\gamma(H,\B))$ if, and only if, $G_{\B,{\rm loc}}^\beta(f) \in BMO((0,\infty),\gamma(H,\B))$,
where
$$G_{\B,{\rm loc}}^\beta(f)(t,x)
    = \int_{x/2}^{2x} t^\beta \partial_t^\beta P_t(x-y) f(y)dy, \quad t,x \in (0,\infty).$$

We are going to prove that $G_\B^\beta(f_{\rm o}) \in BMO((0,\infty),\gamma(H,\B))$. Let $0<r<s<\infty$.
We define $I=(r,s)$, $x_I=(r+s)/2$ and $d_I=(s-r)/2$. We decompose $f_{\rm o}$ as follows:
$$f_{\rm o}
    =(f_{\rm o}-f_I)\chi_{2I} + (f_{\rm o}-f_I)\chi_{(0,\infty )\setminus 2I} + f_I
    = f_1 + f_2 + f_3,$$
where $2I=(x_I-2d_I,x_I+2d_I)$. By \cite[Theorem 4.2]{KaWe} and \cite[proof of Theorem 1.2]{BCR3}, it follows that the operator $G_\B^\beta$ is bounded from $L^2(\R,\B)$ into $L^2(\R, \gamma(H,\B))$. Then,
\begin{align}\label{H25}
    & \frac{1}{|I|} \int_I \| G_\B^\beta(f_1)(\cdot,x)\|_{\gamma(H,\B)} dx
        \leq \left( \frac{1}{|I|} \int_I \| G_\B^\beta(f_1)(\cdot,x)\|_{\gamma(H,\B)}^2 dx \right)^{1/2} \nonumber\\
    & \qquad \leq C \left( \frac{1}{|I|} \int_{2I} \| f_{\rm o}(x)-f_I\|_\B^2 dx \right)^{1/2}
        \leq C \|f\|_{BMO_{\rm o}((0,\infty ),\B)}.
\end{align}
Hence,
$$\| G_\B^\beta(f_1)(\cdot,x)\|_{\gamma(H,\B)}
    < \infty, \quad \text{a.e. } x \in (0,\infty).$$

On the other hand, since
$$\int_\R P_t(x-y) dy = 1, \quad t \in (0,\infty) \text{ and } x \in \R,$$
if $m \in \N$, being $m-1 \leq \beta < m$,
\begin{align}\label{H26}
    \partial_t^\beta \int_\R P_t(x-y) dy
        = & \frac{e^{-i\pi(m-\beta)}}{\G(m-\beta)} \int_0^\infty s^{m-\beta-1} \partial_t^m \int_\R P_{t+s}(x-y) dy ds
        =0, \quad t \in (0,\infty) \text{ and } x \in \R.
\end{align}
It follows that $G_\B^\beta(f_3)=0$.

In \cite[Section 3.1]{BCR3} it was proved that
$$\| K_\lambda^\beta(\cdot;x,y) - K^\beta(\cdot;x,y)\|_H
    \leq \frac{C}{x}, \quad x/2 < y <2x, \ x \in (0,\infty).$$
Then,
\begin{align}\label{H26A}
    & \Big\| \int_{x/2}^{2x} \left( K_\lambda^\beta(\cdot;x,y) - K^\beta(\cdot;x,y) \right) f(y) dy \Big\|_{\gamma(H,\B)}
        \leq  \int_{x/2}^{2x} \Big\| K_\lambda^\beta(\cdot;x,y) - K^\beta(\cdot;x,y) \Big\|_{H} \|f(y)\|_\B dy \nonumber \\
    & \qquad \leq \frac{C}{x} \int_{x/2}^{2x} \|f(y)\|_\B dy
             \leq C \|f\|_{BMO_{\rm o}((0,\infty ),\B)}, \quad x \in (0,\infty).
\end{align}
Moreover, by using \eqref{Kb} and as it was seen in \eqref{H19A},
\begin{equation}\label{H26B}
     \Big\| \int_{2x}^\infty K_\lambda^\beta(\cdot;x,y) f(y) dy \Big\|_{\gamma(H,\B)}
        \leq  C x^\lambda \int_{2x}^\infty \|f(y)\|_\B \frac{dy}{y^{\lambda+1}} \leq  C \|f\|_{BMO_{\rm o}((0,\infty ),\B)}, \quad x \in (0,\infty),
\end{equation}
and
\begin{equation}\label{H26C}
     \Big\| \int_0^{x/2} K_\lambda^\beta(\cdot;x,y) f(y) dy \Big\|_{\gamma(H,\B)}
        \leq  \frac{C}{x} \int_0^{x/2} \|f(y)\|_\B dy  \leq  C \|f\|_{BMO_{\rm o}((0,\infty ),\B)}, \quad x \in (0,\infty).
\end{equation}
Hence,
$$\| G_\B^{\lambda,\beta}(f)(\cdot,x) - G_\B^{\beta}(f_{\rm o})(\cdot,x)\|_{\gamma(H,\B)}
    \leq C \|f\|_{BMO_{\rm o}((0,\infty ),\B)}, \quad x \in (0,\infty).$$
Since, $\| G_\B^{\lambda,\beta}(f)(\cdot,x)\|_{\gamma(H,\B)}<\infty$ and
$\| G_\B^{\beta}(f_1)(\cdot,x)\|_{\gamma(H,\B)}<\infty$, a.e. $x \in (0,\infty)$,
also
$$\| G_\B^{\beta}(f_2)(\cdot,x)\|_{\gamma(H,\B)}<\infty, \quad \text{a.e. } x \in (0,\infty).$$

By proceeding as in the proof of \eqref{H6} we get
\begin{equation}\label{H27}
    \|\partial_x K^\beta(\cdot;x,y)\|_H
        \leq \frac{C}{|x-y|^2}, \quad x,y \in (0,\infty), \ x \neq y.
\end{equation}

We choose $x_0 \in I$ such that $\| G_\B^{\beta}(f_2)(\cdot,x_0)\|_{\gamma(H,\B)}<\infty$.
By using \eqref{H27} we obtain
\begin{align*}
    & \frac{1}{|I|} \int_I \| G_\B^{\beta}(f_2)(\cdot,x) -  G_\B^{\beta}(f_2)(\cdot,x_0)\|_{\gamma(H,\B)} dx \\
    & \qquad \leq \frac{C}{|I|} \int_I \int_{(0,\infty )\setminus 2I} \|f_{\rm o}(y)-f_I\|_\B \| K^\beta(\cdot;x,y) - K^\beta(\cdot;x_0,y) \|_H dy dx \\
    & \qquad \leq \frac{C}{|I|} \int_I \int_{(0,\infty )\setminus 2I} \|f_{\rm o}(y)-f_I\|_\B \left| \int_x^{x_0} \|\partial_u K^\beta(\cdot,u,y)\|_H du \right| dy dx \\
    & \qquad \leq \frac{C}{|I|} \int_I \int_{(0,\infty )\setminus 2I} \|f_{\rm o}(y)-f_I\|_\B \frac{|x-x_0|}{|x-y|^2} dy dx.
\end{align*}
By employing standard arguments (see, for instance, \cite[(9)]{BFHR})
we deduce that
\begin{equation}\label{H27b}
    \frac{1}{|I|} \int_I \| G_\B^{\beta}(f_2)(\cdot,x) -  G_\B^{\beta}(f_2)(\cdot,x_0)\|_{\gamma(H,\B)} dx
        \leq C \|f\|_{BMO_o((0,\infty ),\B)}.
\end{equation}
By putting together \eqref{H25}, \eqref{H26} and \eqref{H27b} we conclude that
$$\frac{1}{|I|} \int_I \| G_\B^{\beta}(f_{\rm o})(\cdot,x) -  G_\B^{\beta}(f_2)(\cdot,x_0)\|_{\gamma(H,\B)} dx
        \leq C \|f\|_{BMO_o((0,\infty ),\B)},$$
where $C>0$ does not depend on $I$. Hence, $G_\B^\beta(f_{\rm o}) \in BMO((0,\infty),\gamma(H,\B))$, and then
$G_{\B,{\rm loc}}^\beta(f) \in BMO((0,\infty),\gamma(H,\B))$. Finally by using \eqref{H26A}, \eqref{H26B} and
\eqref{H26C} we obtain that
$$\| G_\B^{\lambda,\beta}(f)(\cdot,x) -  G_{\B,{\rm loc}}^{\beta}(f)(\cdot,x)\|_{\gamma(H,\B)}
    \leq C \|f\|_{BMO_{\rm o}((0,\infty ),\B)}, \quad x \in (0,\infty),$$
and hence $G_\B^{\lambda,\beta}(f) \in BMO((0,\infty),\gamma(H,\B))$.
\end{proof}

%\newpage
\subsection{}\label{subsec:2.3}

In this section we are going to show that
$$\|f\|_{BMO_{\rm o}((0,\infty ),\B)}
    \leq C \|G_\B^{\lambda,\beta}(f)\|_{BMO_{\rm o}((0,\infty ),\gamma(H,\B))}, \quad f \in BMO_{\rm o}((0,\infty ),\B).$$
To establish this property we need to prove some auxiliary results.

Suppose firstly that $f \in BMO_{\rm o}(0,\infty)$. According to \cite[Theorem 6.1]{BCS} we have that the measure
$$d\mu_f(x,t)
    = \left| t^\beta \partial_t^\beta P_t^\lambda(f)(x) \right|^2 \frac{dx dt}{t}$$
is Carleson on $(0,\infty)^2$. Then, by \cite[Proposition 5.3]{BCS}, for every $a \in L^\infty_{+c}(0,\infty)$,
where $L^\infty_{+c}(0,\infty)$ denotes the space of bounded measurable functions with upper bounded support on $(0,\infty)$,
\begin{equation}\label{H28}
    \int_0^\infty \int_0^\infty t^\beta \partial_t^\beta P_t^\lambda(f)(x) t^\beta \partial_t^\beta P_t^\lambda(a)(x) \frac{dxdt}{t}
        = \frac{e^{2\pi i \beta}\G(2\beta)}{2^{2\beta}} \int_0^\infty f(x)a(x)dx.
\end{equation}
The following is a vector-valued version of \eqref{H28}.

\begin{Prop}\label{polarizacion}
    Let $\lambda,\beta>0$. If $f \in BMO_{\rm o}((0,\infty ),\B)$ and $a \in L^\infty_{+c}(0,\infty) \otimes \B^*$, then
    $$\int_0^\infty \int_0^\infty \langle G_{\B^*}^{\lambda,\beta}(a)(t,x), G_{\B}^{\lambda,\beta}(f)(t,x) \rangle_{\B^*,\B} \frac{dxdt}{t}
        = \frac{e^{2\pi i \beta}\G(2\beta)}{2^{2\beta}} \int_0^\infty \langle a(x),f(x) \rangle_{\B^*,\B}dx.$$
\end{Prop}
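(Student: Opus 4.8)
The plan is to reduce the vector-valued identity to the scalar one in \eqref{H28} by pairing against functionals. Fix $f \in BMO_{\rm o}((0,\infty),\B)$ and $a = \sum_{i=1}^n S_i a_i \in L^\infty_{+c}(0,\infty) \otimes \B^*$, where $S_i \in \B^*$ and $a_i \in L^\infty_{+c}(0,\infty)$. Since $\gamma(H,\C) = H$ and $G_{\B^*}^{\lambda,\beta}$ acts coordinatewise on tensors, we have $G_{\B^*}^{\lambda,\beta}(a)(t,x) = \sum_{i=1}^n S_i\, t^\beta \partial_t^\beta P_t^\lambda(a_i)(x)$, so it suffices to treat $a = S_0 \varphi$ with $S_0 \in \B^*$ and $\varphi \in L^\infty_{+c}(0,\infty)$, and then sum. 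In that case $\langle G_{\B^*}^{\lambda,\beta}(a)(t,x), G_\B^{\lambda,\beta}(f)(t,x)\rangle_{\B^*,\B} = t^\beta\partial_t^\beta P_t^\lambda(\varphi)(x)\,\langle S_0, G_\B^{\lambda,\beta}(f)(t,x)\rangle_{\B^*,\B}$, and since $G_\B^{\lambda,\beta}(f)(t,x) = t^\beta\partial_t^\beta P_t^\lambda(f)(x)$ with the Bochner integral commuting with the continuous functional $S_0$, the right factor equals $t^\beta \partial_t^\beta P_t^\lambda(\langle S_0, f\rangle_{\B^*,\B})(x)$. Thus the left-hand integrand is exactly the scalar integrand of \eqref{H28} with $f$ replaced by the scalar $BMO_{\rm o}(0,\infty)$ function $\langle S_0, f\rangle_{\B^*,\B}$ and $a$ replaced by $\varphi$.

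Next I would apply \eqref{H28} to this scalar pair: the function $\langle S_0, f\rangle_{\B^*,\B}$ lies in $BMO_{\rm o}(0,\infty)$ because $f \in BMO_{\rm o}((0,\infty),\B)$ (the mean oscillation of the scalar pairing is controlled by $\|S_0\|_{\B^*}$ times that of $f$, and the odd-extension condition $(Bi)$ is likewise inherited), and $\varphi \in L^\infty_{+c}(0,\infty)$. This yields
$$\int_0^\infty\!\!\int_0^\infty t^\beta\partial_t^\beta P_t^\lambda(\langle S_0,f\rangle)(x)\, t^\beta\partial_t^\beta P_t^\lambda(\varphi)(x)\frac{dxdt}{t}
 = \frac{e^{2\pi i\beta}\G(2\beta)}{2^{2\beta}}\int_0^\infty \langle S_0, f(x)\rangle_{\B^*,\B}\,\varphi(x)\,dx,$$
and the right side equals $\dfrac{e^{2\pi i\beta}\G(2\beta)}{2^{2\beta}}\displaystyle\int_0^\infty \langle S_0\varphi, f(x)\rangle_{\B^*,\B}\,dx = \dfrac{e^{2\pi i\beta}\G(2\beta)}{2^{2\beta}}\displaystyle\int_0^\infty \langle a(x), f(x)\rangle_{\B^*,\B}\,dx$. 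Summing over the finitely many tensor terms gives the claimed identity.

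The main obstacle is justifying the absolute convergence of the double integral on the left so that Fubini and the interchange of $S_0$ with the integral in $t$ are legitimate. For this I would use the a priori bounds already proved: $G_{\B^*}^{\lambda,\beta}(a)(\cdot,x) \in \gamma(H,\B^*)$ with, by \eqref{H5}, \eqref{Kb} and the reasoning before Lemma~\ref{Lemap17}, a pointwise bound that is $O((x y)^\lambda/|x-y|^{2\lambda+1})$ for $x$ outside $\supp a$ and $O(1)$ (indeed $L^2$) near $\supp a$; on the other side $\|G_\B^{\lambda,\beta}(f)(\cdot,x)\|_{\gamma(H,\B)} < \infty$ for a.e.\ $x$ by the remark following Lemma~\ref{Lemap17}, together with the estimates in \eqref{H19A} and \eqref{Kb} which show $\|G_\B^{\lambda,\beta}(f)(\cdot,x)\|_{\gamma(H,\B)}$ grows at most like $\|f\|_{BMO_{\rm o}}(1 + (x/t)^{2\lambda+1})$-type bounds integrably against $dt/t$ and polynomially in $x$. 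Since $a$ has upper-bounded support, the decay $|x-y|^{-(2\lambda+1)}$ beats the polynomial growth of $f$'s square function at infinity, and near the support the $L^2\times L^2$ Cauchy--Schwarz pairing (using Theorem~\ref{Th:B} for $G_{\B^*}^{\lambda,\beta}$ on $L^2$) controls the contribution; combining these gives a finite majorant $\int_0^\infty \|G_{\B^*}^{\lambda,\beta}(a)(\cdot,x)\|_{\gamma(H,\B^*)}\|G_\B^{\lambda,\beta}(f)(\cdot,x)\|_{\gamma(H,\B)}\,dx < \infty$, after which Fubini applies and the scalar identity \eqref{H28} can be invoked coordinatewise as above.
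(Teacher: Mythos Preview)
Your argument is correct and follows essentially the same route as the paper: decompose $a$ as a finite sum of simple tensors, pull the functional $b_j\in\B^*$ through the Bochner integral so that $\langle b_j, G_\B^{\lambda,\beta}(f)\rangle_{\B^*,\B}=G_\C^{\lambda,\beta}(\langle b_j,f\rangle_{\B^*,\B})$, observe that $\langle b_j,f\rangle_{\B^*,\B}\in BMO_{\rm o}(0,\infty)$, and invoke the scalar identity \eqref{H28} term by term. Your final paragraph on absolute convergence is unnecessary (and the ``$(1+(x/t)^{2\lambda+1})$-type'' phrasing is muddled, since after taking the $H$-norm there is no $t$ left): the convergence of each scalar double integral is already part of the statement of \eqref{H28}, so once you have reduced to the scalar case no further justification is needed.
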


\begin{proof}
    Assume that $f \in BMO_{\rm o}((0,\infty ),\B)$ and $a \in L^\infty_{+c}(0,\infty) \otimes \B^*$. If $a=\sum_{j=1}^n a_j b_j$,
    being $a_j \in L^\infty_{+c}(0,\infty)$ and $b_j \in \B^*$, $j=1, \dots, n$, by using \eqref{H28} we can write
    \begin{align*}
        & \int_0^\infty \int_0^\infty \langle G_{\B^*}^{\lambda,\beta}(a)(t,x), G_{\B}^{\lambda,\beta}(f)(t,x) \rangle_{\B^*,\B} \frac{dxdt}{t} \\
        & \qquad = \sum_{j=1}^n \int_0^\infty \int_0^\infty G_{\C}^{\lambda,\beta}(a_j)(t,x) \langle b_j , G_{\B}^{\lambda,\beta}(f)(t,x) \rangle_{\B^*,\B} \frac{dxdt}{t} \\
        & \qquad = \sum_{j=1}^n \int_0^\infty \int_0^\infty G_{\C}^{\lambda,\beta}(a_j)(t,x)  G_{\C}^{\lambda,\beta}(\langle b_j ,f\rangle_{\B^*,\B})(t,x)  \frac{dxdt}{t} \\
        & \qquad = \frac{e^{2\pi i \beta}\G(2\beta)}{2^{2\beta}} \sum_{j=1}^n \int_0^\infty a_j(x) \langle b_j ,f(x)\rangle_{\B^*,\B} dx
                 = \frac{e^{2\pi i \beta}\G(2\beta)}{2^{2\beta}} \int_0^\infty  \langle a(x),f(x)\rangle_{\B^*,\B} dx,
    \end{align*}
    because, for every $j=1, \dots, n$, $\langle b_j, f \rangle_{\B^*,\B} \in BMO_{\rm o}(0,\infty )$.
\end{proof}

Let $f \in BMO_{\rm o}((0,\infty ),\B)$. We have that
\begin{equation}\label{H28A}
    \|f\|_{BMO_{\rm o}((0,\infty ),\B)}
        = \sup_{\substack{ g \in \mathcal{A} \otimes \B^* \\ \|g\|_{H_{\rm o}^1((0,\infty ),\B^*) \leq 1}}} \left| \int_0^\infty \langle g(x), f(x) \rangle_{\B^*,\B}\right|,
\end{equation}
where $\mathcal{A}=\spann \{ a : a \text{ is an } \infty-\text{atom}\}$. Note that, since $\B$ is
a UMD space, $\B^*$ is also UMD, $\B$ is reflexive and $BMO_{\rm o}((0,\infty ),\B)$ is the dual space of $H_{\rm o}^1((0,\infty ),\B^*)$.
Moreover, since $\mathcal{A}$ is a dense subspace of $H_{\rm o}^1((0,\infty ))$, $\mathcal{A} \otimes \B^*$ is dense in $H_{\rm o}^1((0,\infty ),\B^*)$.

According to Proposition~\ref{polarizacion} we get, for every $a \in \mathcal{A} \otimes \B^*$,
$$\int_0^\infty \int_0^\infty \langle G_{\B^*}^{\lambda,\beta}(a)(t,x), G_{\B}^{\lambda,\beta}(f)(t,x) \rangle_{\B^*,\B} \frac{dxdt}{t}
        = \frac{e^{2\pi i \beta}\G(2\beta)}{2^{2\beta}} \int_0^\infty \langle a(x),f(x) \rangle_{\B^*,\B}dx.$$
Also, for every $a \in \mathcal{A} \otimes \B^*$, we can write
\begin{equation}\label{H29}
    \int_0^\infty \langle G_{\B^*}^{\lambda,\beta}(a)(t,x), G_{\B}^{\lambda,\beta}(f)(t,x) \rangle_{\B^*,\B} \frac{dt}{t}
        = \left\langle G_{\B^*}^{\lambda,\beta}(a)(\cdot,x), G_{\B}^{\lambda,\beta}(f)(\cdot,x) \right\rangle_{\gamma(H,\B^*),\gamma(H,\B)},
        \quad x \in (0,\infty).
\end{equation}
Indeed, we take $a=\sum_{j=1}^n a_j b_j$, where $a_j \in \mathcal{A}$ and $b_j \in \B^*$, $j=1, \dots, n$. By taking into account the results in Section~\ref{subsec:2.1}, we have that
$$G_{\B^*}^{\lambda,\beta}(a)(t,x)
    = \sum_{j=1}^n b_j G_{\C}^{\lambda,\beta}(a_j)(t,x) \in H_{\rm o}^1((0,\infty ),\gamma(H,\B^*)).$$
The dual $(\gamma(H,\B))^*$ of $\gamma(H,\B)$ can be identified with $\gamma(H,\B^*)$ by using the trace functional.
If $(h_m)_{m\in \N}$ is an orthonormal basis in $H$ we can write
\begin{align*}
    & \left\langle G_{\B^*}^{\lambda,\beta}(a)(\cdot,x), G_{\B}^{\lambda,\beta}(f)(\cdot,x) \right\rangle_{\gamma(H,\B^*),\gamma(H,\B)}
        = \sum_{j=1}^n \left\langle b_j G_{\C}^{\lambda,\beta}(a_j)(\cdot,x), G_{\B}^{\lambda,\beta}(f)(\cdot,x) \right\rangle_{\gamma(H,\B^*),\gamma(H,\B)} \\
    & \qquad = \sum_{j=1}^n \sum_{m\in \N}\int_0^\infty h_m(t) \int_0^\infty
                    \left\langle b_j G_{\C}^{\lambda,\beta}(a_j)(u,x), G_{\B}^{\lambda,\beta}(f)(t,x) \right\rangle_{\B^*,\B} h_m(u) \frac{du}{u} \frac{dt}{t} \\
    & \qquad = \sum_{j=1}^n \sum_{m\in \N}\int_0^\infty h_m(t) \int_0^\infty G_{\C}^{\lambda,\beta}(a_j)(u,x)
                     G_{\C}^{\lambda,\beta}(\left\langle b_j ,f\right\rangle_{\B^*,\B})(t,x)  h_m(u) \frac{du}{u} \frac{dt}{t} \\
    & \qquad = \sum_{j=1}^n \langle  G_{\C}^{\lambda,\beta}(a_j)(\cdot ,x),
                     G_{\C}^{\lambda,\beta}(\langle b_j ,f\rangle_{\B^*,\B})(\cdot,x) \rangle _{\gamma(H,\C),\gamma(H,\C)}\\
    & \qquad = \sum_{j=1}^n \int_0^\infty G_{\C}^{\lambda,\beta}(a_j)(u,x)
                     G_{\C}^{\lambda,\beta}(\left\langle b_j ,f\right\rangle_{\B^*,\B})(u,x)   \frac{du}{u} \\
     & \qquad = \int_0^\infty
                     \left\langle G_{\B^*}^{\lambda,\beta}(a)(u,x), G_{\B}^{\lambda,\beta}(f)(u,x) \right\rangle_{\B^*,\B}  \frac{du}{u}, \quad x \in (0,\infty).
\end{align*}
We have used that $\gamma(H,\C)=H$.

By  \eqref{H29} we obtain
\begin{align*}
    & \int_0^\infty \left| \left\langle G_{\B^*}^{\lambda,\beta}(a)(\cdot,x), G_{\B}^{\lambda,\beta}(f)(\cdot,x) \right\rangle_{\gamma(H,\B^*),\gamma(H,\B)} \right| dx \\
    & \qquad \leq \sum_{j=1}^n \int_0^\infty \int_0^\infty
            \left| G_{\C}^{\lambda,\beta}(a_j)(t,x) \right| \left|G_{\C}^{\lambda,\beta}(\langle b_j ,f \rangle_{\B^*,\B})(t,x) \right|  \frac{dt}{t} dx.
\end{align*}
Since, $\langle b_j ,f \rangle_{\B^*,\B} \in BMO_{\rm o}(0,\infty )$, for every $j=1, \dots, n$, by \cite[Theorem 6.1]{BCS},
$|G_{\C}^{\lambda,\beta}(\langle b_j ,f \rangle_{\B^*,\B})(t,x)|^2 dtdx/t$ is a Carleson measure on $(0,\infty)^2$,
for every $j=1, \dots, n$. Then, according to \cite[Propositions 5.1 and 5.2]{BCS},
$$\int_0^\infty \left| \left\langle G_{\B^*}^{\lambda,\beta}(a)(\cdot,x), G_{\B}^{\lambda,\beta}(f)(\cdot,x) \right\rangle_{\gamma(H,\B^*),\gamma(H,\B)} \right| dx
    < \infty.$$
From \cite[Proposition 2.5]{BCCFR2} (adapted to this Bessel context), Proposition~\ref{polarizacion}, \eqref{H29},
and by taking into account, as it has already been proved, that $G_{\B^*}^{\lambda,\beta}(a) \in H_{\rm o}^1((0,\infty ),\gamma(H,\B^*))$ (Section \ref{subsec:2.1})
and $G_{\B}^{\lambda,\beta}(f) \in BMO_{\rm o}((0,\infty ),\gamma(H,\B))$ (Section \ref{subsec:2.2}), we conclude that
\begin{align*}
    \left| \int_0^\infty \langle a(x) , f(x) \rangle_{\B^*,\B}dx \right|
        \leq & C \|G_{\B^*}^{\lambda,\beta}(a)\|_{H_{\rm o}^1((0,\infty ),\gamma(H,\B^*))} \|G_{\B}^{\lambda,\beta}(f)\|_{BMO_{\rm o}((0,\infty ),\gamma(H,\B))} \\
        \leq & C \|a\|_{H_{\rm o}^1((0,\infty ),\B^*)} \|G_{\B}^{\lambda,\beta}(f)\|_{BMO_{\rm o}((0,\infty ),\gamma(H,\B))}.
\end{align*}
From \eqref{H28A} it follows that
$$ \|f\|_{BMO_{\rm o}((0,\infty ),\B)}
    \leq C \|G_{\B}^{\lambda,\beta}(f)\|_{BMO_{\rm o}((0,\infty ),\gamma(H,\B))}.$$

%\newpage
\subsection{}\label{subsec:2.4}

Our objective is to prove that, for every $a \in H_{\rm o}^1((0,\infty ),\B)$,
\begin{equation}\label{H30}
    \|a\|_{H_{\rm o}^1((0,\infty ),\B)}
        \leq C \|G_{\B}^{\lambda,\beta}(a)\|_{H_{\rm o}^1((0,\infty ),\gamma(H,\B)}.
\end{equation}
We have that
$$\|a\|_{H_{\rm o}^1((0,\infty ),\B)}
    = \sup_{\substack{ f \in BMO_{\rm o}((0,\infty ),\B^*) \\ \|f\|_{BMO_{\rm o}((0,\infty ),\B^*)} \leq 1}} \left| \int_0^\infty \langle f(x), a(x) \rangle_{\B^*,\B}dx\right|,
    \quad a\in H_{\rm o}^1((0,\infty ),\B).$$
Suppose that $a \in \mathcal{A} \otimes \B$, where $\mathcal{A}$ is defined as in Section~\ref{subsec:2.3}. By proceeding
as in Section~\ref{subsec:2.3} we get
\begin{align*}
    \left| \int_0^\infty \langle f(x) , a(x) \rangle_{\B^*,\B} \right|
        \leq & C \|G_{\B^*}^{\lambda,\beta}(f)\|_{BMO_{\rm o}((0,\infty ),\gamma(H,\B^*))} \|G_{\B}^{\lambda,\beta}(a)\|_{H_{\rm o}^1((0,\infty ),\gamma(H,\B))} \\
        \leq & C \|f\|_{BMO_{\rm o}((0,\infty ),\B^*)} \|G_{\B}^{\lambda,\beta}(a)\|_{H_{\rm o}^1((0,\infty ),\gamma(H,\B))}.
\end{align*}
Hence, \eqref{H30} holds.

In order to see that \eqref{H30} holds for every $a \in H_{\rm o}^1((0,\infty ),\B)$ it is enough to take into account that $\mathcal{A} \otimes \B$
is a dense subset of $H_{\rm o}^1((0,\infty ),\B)$ and that the operator $G_\B^{\lambda,\beta}$ is bounded from $H_{\rm o}^1((0,\infty ),\B)$
into $H_{\rm o}^1((0,\infty ),\gamma(H,\B))$ (Section \ref{subsec:2.1}).

%\newpage
%%%%%%%%%%%%%%%%%%%%%%%%%%%%%%%%%%%%%%%%%%%%%%%%%%%%%%%%%%%%%%%%%%%%%%%%%%%%%%%%%%%%%%%%%%%%%%%%%%%%%%%%%%%%%%%%%%%%
\section{Proof of Theorem~\ref{Th:2}}\label{sec:Proof2}
%%%%%%%%%%%%%%%%%%%%%%%%%%%%%%%%%%%%%%%%%%%%%%%%%%%%%%%%%%%%%%%%%%%%%%%%%%%%%%%%%%%%%%%%%%%%%%%%%%%%%%%%%%%%%%%%%%%%

In this section we prove that
$$
\mathcal{G}_\mathbb{B}^\lambda (f)(t,x)=tD_{\lambda ,x}^*P_t^{\lambda +1}(f)(x),\quad t,x\in (0,\infty ),
$$
is a bounded operator from $H^1_{\rm o}((0,\infty ),\mathbb{B})$ into $H^1_{\rm o}((0,\infty ),\gamma (H,\mathbb{B}))$ and from $BMO_{\rm o}((0,\infty ),\mathbb{B})$ into $BMO_{\rm o}((0,\infty ),\gamma (H,\mathbb{B}))$. Here $D_{\lambda,x}^*=-x^{-\lambda }\frac{d}{dx}x^\lambda $.

\subsection{}
 We establish now the behavior of $\mathcal{G}^\lambda _{\mathbb{B}}$ between Hardy spaces. In \cite[Theorem 1.3]{BCR3} we proved that the operator $\mathcal{G}^\lambda _{\mathbb{B}}$ is bounded from $L^p((0,\infty ),\mathbb{B})$ into $L^p((0,\infty ),\gamma (H,\mathbb{B}))$, for every $1<p<\infty$.

We show that $\mathcal{G}^\lambda _{\mathbb{B}}$ is a $(\mathbb{B},\gamma (H,\mathbb{B}))$-Calder\'on-Zygmund operator.

\begin{Lem}\label{Lemp25} Let $B$ be a UMD Banach space and $\lambda \geq 1$. The operator $\mathcal{G}^\lambda _{\mathbb{B}}$ is a $(\mathbb{B},\gamma (H,\mathbb{B}))$-Calder\'on-Zygmund operator.
\end{Lem}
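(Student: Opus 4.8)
The strategy is to verify the three ingredients in the definition of a $(\B,\gamma(H,\B))$-Calder\'on-Zygmund operator, exactly as was done for $G_\B^{\lambda,\beta}$ in Lemma~\ref{Lemap8}. First, $L^p$-boundedness for some (equivalently, all) $1<p<\infty$ is already available: it is \cite[Theorem~1.3]{BCR3}, recalled above. So the work reduces to obtaining a kernel representation together with the standard size and smoothness (Calder\'on-Zygmund) estimates for the operator-valued kernel. I would write
$$
\mathcal{G}_\B^\lambda(f)(t,x)=\int_0^\infty \mathcal{K}_\lambda(t;x,y)\,f(y)\,dy,\qquad
\mathcal{K}_\lambda(t;x,y)=t\,D_{\lambda,x}^*P_t^{\lambda+1}(x,y),
$$
valid for $f$ in a suitable dense class ($S_\lambda(0,\infty)\otimes\B$) and $x\notin\supp f$, and then identify $\mathcal{K}_\lambda(\cdot;x,y)$ with an element of $L(\B,\gamma(H,\B))$ via $b\mapsto \mathcal{K}_\lambda(\cdot;x,y)b$, using $\gamma(H,\C)=H$ as in the passage around \eqref{H8a}.

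The core of the argument is therefore the pointwise kernel bounds. Since $D_{\lambda,x}^*=-x^{-\lambda}\frac{d}{dx}x^\lambda$, one has $D_{\lambda,x}^*P_t^{\lambda+1}(x,y)=-x^{-\lambda}\partial_x\big(x^\lambda P_t^{\lambda+1}(x,y)\big)=-\partial_xP_t^{\lambda+1}(x,y)-\lambda x^{-1}P_t^{\lambda+1}(x,y)$. Using the integral representation of $P_t^{\lambda+1}(x,y)$ and differentiating under the integral sign in $\theta$ — that is, applying the identity \eqref{H0} with $\lambda$ replaced by $\lambda+1$ and the $x$-derivative estimate \eqref{I1I2}, split as $I_1+I_2$ and controlled via \eqref{10.1}–\eqref{I2} (this is where $\lambda\ge 1$ is used) — I expect to obtain
$$
\big|\partial_t^k \mathcal{K}_\lambda(t;x,y)\big|\le \frac{C}{(t+|x-y|)^{k+1}}\Big(\text{times } t\Big),\qquad t,x,y\in(0,\infty),\ k\in\N,
$$
and analogous bounds for the $x$- and $y$-derivatives with an extra factor $|x-y|^{-1}$. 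Feeding these into Minkowski's inequality in $L^2((0,\infty),dt/t)$, exactly as in \eqref{H4} and \eqref{H6}–\eqref{H7}, yields
$$
\|\mathcal{K}_\lambda(\cdot;x,y)\|_H\le\frac{C}{|x-y|},\qquad
\|\partial_x\mathcal{K}_\lambda(\cdot;x,y)\|_H+\|\partial_y\mathcal{K}_\lambda(\cdot;x,y)\|_H\le\frac{C}{|x-y|^2},
$$
and hence the corresponding operator-norm bounds in $\B\to\gamma(H,\B)$ as in \eqref{H8a}–\eqref{H8c}. Combined with the $L^p$-boundedness from \cite{BCR3}, this shows $\mathcal{G}_\B^\lambda$ is a $(\B,\gamma(H,\B))$-Calder\'on-Zygmund operator.

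The main obstacle I anticipate is controlling the term $\lambda x^{-1}P_t^{\lambda+1}(x,y)$, and more generally the homogeneity bookkeeping in the $x$-derivative estimate: the presence of the factor $x^{-1}$ (or $x^{\lambda-1}$ after pairing with $(xy)^{\lambda+1}$) forces a separate treatment in the regimes $y\le 2x$ and $y>2x$, just as in \eqref{I2}, and it is precisely there that the hypothesis $\lambda\ge1$ enters — for $\lambda<1$ one of the resulting $\theta$-integrals would diverge. Once that case-splitting is handled as in \eqref{I2}, the remaining computations are routine applications of Minkowski's inequality and the elementary integral $\int_0^\infty s^{a-1}(s+r)^{-a-1}\,ds=c_a/r$, exactly mirroring the proof of Lemma~\ref{Lemap8}, and the differentiation under the integral sign is justified by the same dominated-convergence arguments used there.
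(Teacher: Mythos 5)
Your plan is correct and follows essentially the same route as the paper's proof: the kernel $M^\lambda(t;x,y)=tD^*_{\lambda,x}P_t^{\lambda+1}(x,y)$, size and smoothness estimates obtained by reducing to the quantities $I_1,I_2$ of \eqref{I1I2} with $\lambda+1$ in place of $\lambda$ (via \eqref{10.1} and \eqref{I2}, the hypothesis $\lambda\ge 1$ entering exactly where you say, namely for the $x^{\lambda-1}y^{\lambda+1}$ term produced by the $x$-derivative), Minkowski's inequality in $H$, the identification $b\mapsto M^\lambda(\cdot;x,y)b$ via $\gamma(H,\C)=H$, and $L^2$-boundedness from \cite[Theorem 1.3]{BCR3}. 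The only blemish is your displayed intermediate bound $|\partial_t^k\mathcal{K}_\lambda(t;x,y)|\le C\,t\,(t+|x-y|)^{-k-1}$, which for $k=0$ fails to be square-integrable against $dt/t$; the correct pointwise estimates are $|M^\lambda(t;x,y)|\le C\,t^2(t+|x-y|)^{-3}$ and $|\partial_xM^\lambda(t;x,y)|+|\partial_yM^\lambda(t;x,y)|\le C\,t^2(t+|x-y|)^{-4}$, which do yield the $H$-norm bounds $C/|x-y|$ and $C/|x-y|^2$ you state.
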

\begin{proof}
We consider the function
\begin{equation}\label{Mlambda}
M^\lambda (t;x,y)=tD_{\lambda ,x}^*P_t^{\lambda +1}(x,y),\quad t,x,y\in (0,\infty ).
\end{equation}
$M^\lambda $ defines, as it will be specified, a standard $(\mathbb{B},\gamma (H,\mathbb{B}))$-Calder\'on-Zygmund kernel. Indeed, we have that
\begin{eqnarray*}
M^\lambda (t;x,y)&=&-tx^{-\lambda }\partial _x\left[\frac{2(\lambda +1)t}{\pi }x^{2\lambda +1}y^{\lambda +1}\int_0^\pi \frac{(\sin \theta )^{2\lambda +1}}{((x-y)^2+t^2+2xy(1-\cos \theta ))^{\lambda +2}}d\theta \right]\\
&=&-\frac{2(\lambda +1)}{\pi }t^2y^{\lambda +1}\left[(2\lambda +1)x^\lambda \int_0^\pi  \frac{(\sin \theta )^{2\lambda +1}}{((x-y)^2+t^2+2xy(1-\cos \theta ))^{\lambda +2}}d\theta \right.\\
&&-\left. 2(\lambda +2)x^{\lambda +1}\int_0^\pi  \frac{(\sin \theta )^{2\lambda +1}((x-y)+y(1-\cos \theta ))}{((x-y)^2+t^2+2xy(1-\cos \theta ))^{\lambda +3}}d\theta \right],\quad t,x,y\in (0,\infty ).
\end{eqnarray*}
Then,
\begin{eqnarray}\label{H30A}
|M^\lambda (t;x,y)|&\leq&Ct^2\left[(xy)^{\lambda+1}\int_0^\pi  \frac{(\sin \theta )^{2\lambda +1}}{(|x-y|+t+\sqrt{2xy(1-\cos \theta )})^{2\lambda +5}}d\theta \right.\nonumber\\
&&+\left. y^{\lambda +1}x^\lambda \int_0^\pi  \frac{(\sin \theta )^{2\lambda +1}}{(|x-y|+t+\sqrt{2xy(1-\cos \theta )})^{2\lambda +4}}d\theta\right]\nonumber\\
&=&C[\mathcal{I}_1(t;x,y)+\mathcal{I}_2(t;x,y)],\quad t,x,y\in (0,\infty ).
\end{eqnarray}
We observe that $\mathcal{I}_j(t;x,y)=t^2I_j(t/2,t/2;x,y)$, $t,x,y\in (0,\infty )$, $j=1,2$, where $I_j$, $j=1,2$, are the functions appearing in \eqref{I1I2} for $m=1$ and $\lambda +1$  instead of $\lambda$.

Then, by \eqref{10.1} and \eqref{I2}, for $j=1,2$, we have that
\begin{equation}\label{Ical}
\mathcal{I}_j(t;x,y)\leq C\frac{t^2}{(t+|x-y|)^3},\quad t,x,y\in (0,\infty ),
\end{equation}
and
$$
\|\mathcal{I}_j(t;x,y)\|_H\leq C\left(\int_0^\infty \frac{t^3}{(t+|x-y|)^6}dt\right)^{1/2}=\frac{C}{|x-y|},\quad x,y\in (0,\infty ), x\not=y.
$$
Hence,
\begin{equation}\label{H31}
\|M^\lambda (\cdot;x,y)\|_H\leq \frac{C}{|x-y|},\quad x,y\in (0,\infty ),\;x\not=y.
\end{equation}
We consider, for every $x,y\in (0,\infty )$, $x\not=y$, the operator
$$
\hspace{-3.5cm}
\begin{array}{c}
M^\lambda (x,y):\mathbb{B}\longrightarrow \gamma (H,\mathbb{B})\\
\hspace{4cm}b\longrightarrow M^\lambda (x,y)(b):H\longrightarrow \mathbb{B}\\
\hspace{10.5cm}\displaystyle h\longrightarrow \int_0^\infty M^\lambda (t;x,y)h(t)\frac{dt}{t}b.
\end{array}
$$

We have that, for every $b\in \B$,
$$
\|M^\lambda (x,y)(b)\|_{\gamma (H,\mathbb{B})}\leq \|M^\lambda (\cdot ;x,y)\|_H\|b\|_\mathbb{B}\leq C\frac{\|b\|_\mathbb{B}}{|x-y|}, \quad x,y\in (0,\infty ),\;x\not=y.
$$
Then,
\begin{equation}\label{H32}
\|M^\lambda (x,y)\|_{\mathbb{B}\rightarrow \gamma (H,\mathbb{B})}\leq \frac{C}{|x-y|},\quad x,y\in (0,\infty ),\;x\not=y.
\end{equation}
We can write, for every $t,x,y\in (0,\infty )$,
\begin{eqnarray*}
\partial _x M^\lambda (t;x,y)&=&-\frac{2(\lambda +1)}{\pi }t^2y^{\lambda +1}\left[(2\lambda +1)\lambda x^{\lambda -1}\int_0^\pi  \frac{(\sin \theta )^{2\lambda +1}}{((x-y)^2+t^2+2xy(1-\cos \theta ))^{\lambda +2}}d\theta \right.\\
&&-2(2\lambda +1)(\lambda +2) x^\lambda \int_0^\pi  \frac{(\sin \theta )^{2\lambda +1}((x-y)+y(1-\cos \theta ))}{((x-y)^2+t^2+2xy(1-\cos \theta ))^{\lambda +3}}d\theta \\
&&-2(\lambda +1)(\lambda +2) x^\lambda \int_0^\pi  \frac{(\sin \theta )^{2\lambda +1}((x-y)+y(1-\cos \theta ))}{((x-y)^2+t^2+2xy(1-\cos \theta ))^{\lambda +3}}d\theta\\
&&+4(\lambda +2)(\lambda +3) x^{\lambda +1}\int_0^\pi  \frac{(\sin \theta )^{2\lambda +1}((x-y)+y(1-\cos \theta ))^2}{((x-y)^2+t^2+2xy(1-\cos \theta ))^{\lambda +4}}d\theta\\
&&-\left.2(\lambda +2) x^{\lambda +1}\int_0^\pi  \frac{(\sin \theta )^{2\lambda +1}}{((x-y)^2+t^2+2xy(1-\cos \theta ))^{\lambda +3}}d\theta \right].
\end{eqnarray*}

It follows that
\begin{eqnarray}\label{H32A}
\left|\partial _xM^\lambda (t;x,y)\right|&\leq &Ct^2\left[y^{\lambda +1}x^{\lambda -1}\int_0^\pi  \frac{(\sin \theta )^{2\lambda +1}}{(|x-y|+t+\sqrt{2xy(1-\cos \theta )})^{2\lambda +4}}d\theta \right.\nonumber\\
&&+y^{\lambda +1}x^\lambda \int_0^\pi  \frac{(\sin \theta )^{2\lambda +1}}{(|x-y|+t+\sqrt{2xy(1-\cos \theta )})^{2\lambda +5}}d\theta\nonumber\\
&&+\left.(yx)^{\lambda +1}\int_0^\pi  \frac{(\sin \theta )^{2\lambda +1}}{(|x-y|+t+\sqrt{2xy(1-\cos \theta )})^{2\lambda +6}}d\theta\right]\nonumber\\
&=&C[J_1(t;x,y)+J_2(t;x,y)+J_3(t;x,y)],\quad t,x,y\in (0,\infty ).
\end{eqnarray}
We note that $J_2(t;x,y)=t^2I_2(t/2,t/2;x,y)$ and $J_3(t;x,y)=t^2I_1(t/2,t/2;x,y)$, $t,x,y\in (0,\infty )$, being $I_j$, $j=1,2$, the functions in \eqref{I1I2} for $m=2$ and $\lambda +1$ instead of $\lambda$. Hence, by (\ref{10.1}) and (\ref{I2}), for $\ell =2,3$,
\begin{equation}\label{26.1}
J_\ell (t;x,y)\leq C\frac{t^2}{(t+|x-y|)^4},\quad t,x,y\in (0,\infty ).
\end{equation}
On the other hand, since $\lambda \geq 1$, by proceeding as in the estimation \eqref{I2}, we obtain
\begin{equation}\label{J1}
J_1(t;x,y)\leq C\frac{t^2}{(t+|x-y|)^4},\quad t,x,y\in (0,\infty ).
\end{equation}
Thus, for $\ell =1,2,3$,
\begin{equation}\label{Jell}
\|J_\ell (\cdot ;,x,y)\|_H\leq C\left(\int_0^\infty \frac{t^3}{(t+|x-y|)^8}dt\right)^{1/2}\leq \frac{C}{|x-y|^2},\quad x,y\in (0,\infty ),x\not=y.
\end{equation}
Hence,
\begin{equation}\label{H33}
\left\|\partial _xM^\lambda (\cdot;x,y)\right\|_H\leq \frac{C}{|x-y|^2},\quad x,y\in (0,\infty ),\;x\not=y.
\end{equation}
We have also that
\begin{equation}\label{H34}
\left\|\partial _yM^\lambda (\cdot;x,y)\right\|_H\leq \frac{C}{|x-y|^2},\quad x,y\in (0,\infty ),\;x\not=y.
\end{equation}
%From (\ref{H34}) we deduce that
%\begin{eqnarray}\label{H35}
%\|M^\lambda (x,y)-M^\lambda (x,z)\|_{\mathbb{B}\rightarrow \gamma (H,\mathbb{B})}&\leq &\|M^\lambda (\cdot; x,y)-M^\lambda (\cdot ;x,z)\|_H\nonumber\\
%&=&\left\| \int _z^y\partial _vM^\lambda (\cdot ; x,v)dv\right\|_H\leq \int _z^y\left\|\partial _vM^\lambda (\cdot ; x,v)\right\|_Hdv\nonumber\\
%&\leq&C\int _z^y\frac{1}{(x-v)^2}dv\leq C\frac{|y-z|}{|x-y|^2},\quad |x-y|\geq 2|y-z|,\quad x,y,z\in (0,\infty ).
%\end{eqnarray}
%By (\ref{H33}) it follows that
%\begin{equation}\label{H36}
%\|M^\lambda (y,x)-M^ \lambda (z,x)\|_{\mathbb{B}\rightarrow \gamma (H,\mathbb{B})}\leq C\frac{|y-z|}{|x-y|^2},\quad |x-y|\geq 2|y-z|,\quad x,y,z\in (0,\infty %).
%\end{equation}
Estimations (\ref{H32}), (\ref{H33}) and (\ref{H34}) show that $M^\lambda $ is a standard $(\mathbb{B},\gamma (H,\mathbb{B}))$-Calder\'on-Zygmund kernel.

Let $f\in S_\lambda (0,\infty)\otimes \mathbb{B}$. According to (\ref{H32}) we have that
$$
\int_0^\infty \|M^\lambda (x,y)\|_{\mathbb{B}\rightarrow \gamma (H,\mathbb{B})}\|f(y)\|_\mathbb{B}dy<\infty ,\quad x\not\in \mbox{supp } f.
$$
We define
\begin{equation}\label{27.1}
Q^\lambda (f)(x)=\int_0^\infty M^\lambda (x,y)f(y)dy,\quad x\not\in \mbox{supp }f,
\end{equation}
where the integral is understood in the $\gamma (H,\mathbb{B})$-Bochner sense.

We can differentiate under the integral sign to get
$$
\mathcal{G}_\mathbb{B}^\lambda (f)(t,x)=\int_0^\infty M^\lambda (t;x,y)f(y)dy,\quad t,x\in (0,\infty ),
$$
and by using Minkowski's inequality and (\ref{H32}) we obtain
$$
\|\mathcal{G}_\mathbb{B}^\lambda (f)(\cdot ,x)\|_{L^2((0,\infty ),dt/t,\mathbb{B})}\leq C\int_0^\infty \frac{\|f(y)\|_\mathbb{B}}{|x-y|}dy,\quad x\not\in \mbox{supp }f.
$$
We consider, for every $x\not \in {\rm supp }f$, the operator
$$
\hspace{-4cm}\begin{array}{c}
\mathbb{G}^\lambda _\mathbb{B} (f)(x):H\longrightarrow \mathbb{B}\\
\hspace{7cm}h\longrightarrow \displaystyle [\mathbb{G}^\lambda _\mathbb{B} (f)(x)](h)=\int_0^\infty \mathcal{G}^\lambda _\mathbb{B}(f)(t,x)h(t)\frac{dt}{t}.
\end{array}
$$
We can write
\begin{eqnarray*}
[Q^\lambda (f)(x)](h)&=&\int_0^\infty [M^\lambda (x,y)f(y)](h)dy\\
&=&\int_0^\infty \int_0^\infty M^\lambda (t;x,y)h(t)\frac{dt}{t}f(y)dy\\
&=&\int_0^\infty \mathcal{G}_\mathbb{B}^\lambda (f)(t,x)h(t)\frac{dt}{t},\quad h\in H\mbox{ and }x\not \in \mbox{supp }f.
\end{eqnarray*}
The interchange of the order of integration is justified because
$$
\int_0^\infty \int_0^\infty |M^\lambda (t;x,y)||h(t)|\frac{dt}{t}\|f(y)\|_\mathbb{B}dy\leq C\|h\|_H\int_0^\infty \frac{\|f(y)\|_\mathbb{B}}{|x-y|}dy<\infty ,\quad h\in H\mbox{ and }x\not \in \mbox{supp }f.
$$
Hence, we deduce that
$$
Q^\lambda (f)(x)=\mathcal{G}_\mathbb{B}^\lambda (f)(\cdot ,x),\quad x\not\in \mbox{supp }f,
$$
as elements of $\gamma (H,\mathbb{B})$.
\end{proof}

According to \cite[Theorem 1.3]{BCR3}, by using vector-valued Calder\'on-Zygmund theory (\cite{RRT}) we infer that the operator defined by \eqref{27.1} can  be extended from $S^\lambda (0,\infty )\otimes \mathbb{B}$ to $L^1((0,\infty ),\mathbb{B})$ as a bounded operator from $L^1((0,\infty ),\mathbb{B})$ into $L^{1,\infty }((0,\infty ),\gamma (H,\mathbb{B}))$ and from $H^1((0,\infty ),\mathbb{B})$
into $L^1((0,\infty ),\gamma (H,\mathbb{B}))$. By proceeding as in the proof of Lemma \ref{Lemap8} we can conclude that the operator $\mathcal{G}_\mathbb{B}^\lambda $ is bounded from $L^1((0,\infty ),\mathbb{B})$ into
$L^{1,\infty }((0,\infty ),\gamma (H,\mathbb{B}))$ and from $H^1((0,\infty ),\mathbb{B})$ into $L^1((0,\infty ),\gamma (H,\mathbb{B}))$.

By (\ref{H30A}) we have that
$$
|M^\lambda (t; x,y)|\leq C[\mathcal{I}_1(t;x,y)+\mathcal{I}_2(t;x,y)],\quad t,x,y\in (0,\infty ),
$$
where
$$
\mathcal{I}_1(t;x,y)=t^2(xy)^{\lambda +1}\int_0^\pi  \frac{(\sin \theta )^{2\lambda +1}}{(|x-y|+t+\sqrt{2xy(1-\cos \theta )})^{2\lambda +5}}d\theta ,\quad t,x,y\in (0,\infty ),
$$
$$
\mathcal{I}_2(t;x,y)=t^2y^{\lambda +1}x^\lambda \int_0^\pi  \frac{(\sin \theta )^{2\lambda +1}}{(|x-y|+t+\sqrt{2xy(1-\cos \theta )})^{2\lambda +4}}d\theta ,\quad t,x,y\in (0,\infty ),
$$
and $M^\lambda (t;x,y)$, $t,x,y\in (0,\infty )$, is the kernel introduced in \eqref{Mlambda}.

We obtain (see the proof of (\ref{H2}) with $\lambda +1$ instead of $\lambda$ and $k=2$)
\begin{equation}\label{I1b}
|\mathcal{I}_1(t;x,y)|\leq C\frac{t^2(xy)^{\lambda +1}}{(t+|x-y|)^{2\lambda +5}},\quad t,x,y\in (0,\infty ),
\end{equation}
and
\begin{equation}\label{I2b}
|\mathcal{I}_2(t;x,y)|\leq C\frac{t^2y^{\lambda +1}x^\lambda}{(t+|x-y|)^{2\lambda +4}},\quad t,x,y\in (0,\infty ),
\end{equation}
Then,
\begin{equation}\label{H37}
\|\mathcal{I}_1(\cdot ; x,y)\|_H\leq C\frac{(xy)^{\lambda +1}}{|x-y|^{2\lambda +3}},\quad x,y\in (0,\infty ),\;x\not=y,
\end{equation}
and
\begin{equation}\label{H38}
\|\mathcal{I}_2(\cdot ; x,y)\|_H\leq C\frac{y^{\lambda +1}x^\lambda}{|x-y|^{2\lambda +2}},\quad x,y\in (0,\infty ),\;x\not=y.
\end{equation}
Let $\delta >0$ and $b\in \mathbb{B}$, $\|b\|_\B=1$. We define $a=\frac{b}{\delta}\chi _{(0,\delta)}$. Since $\mathcal{G}_\mathbb{B}^\lambda$ is bounded from $L^2((0,\infty ),\mathbb{B})$ into $L^2((0,\infty ),\gamma (H,\mathbb{B}))$ (\cite[Theorem 1.3]{BCR3}), we deduce that
$$
\int_0^{2\delta }\|\mathcal{G}_\mathbb{B}^\lambda (a)(\cdot ,x)\|_{\gamma (H,\mathbb{B})}dx\leq C\delta ^{1/2}\|\mathcal{G}_\mathbb{B}^\lambda (a)\|_{L^2((0,\infty ),\gamma (H,\mathbb{B}))}
\leq C\delta ^{1/2}\|a\|_{L^2((0,\infty ),\mathbb{B})}\leq C,
$$
where $C>0$ does not depend on  $\delta$ or $b$. Also, by (\ref{H37}) and (\ref{H38}), we get
\begin{eqnarray*}
\int_{2\delta }^\infty \|\mathcal{G}_\mathbb{B}^\lambda (a)(\cdot ,x)\|_{\gamma (H,\mathbb{B})}dx&\leq&\frac{C}{\delta}\int_{2\delta }^\infty \int_0^\delta
\left(\frac{y^{\lambda +1}x^\lambda}{|x-y|^{2\lambda +2}}+\frac{(xy)^{\lambda +1}}{|x-y|^{2\lambda +3}}\right)dydx\\
&\leq&\frac{C}{\delta}\int_{2\delta }^\infty \frac{dx}{x^{\lambda +2}}\int_0^\delta y^{\lambda +1}dy\leq C,
\end{eqnarray*}
where $C>0$ does not depend on $\delta $ or $b$.
We conclude that $\|\mathcal{G}_\mathbb{B}^\lambda (a)\|_{L^1((0,\infty ),\gamma (H,\mathbb{B}))}\leq C$, where $C>0$ does not depend on $\delta$ or $b$.

Also, since $\mathcal{G}_\mathbb{B}^\lambda$ is bounded from $H^1((0,\infty ),\mathbb{B})$ into $L^1((0,\infty ),\gamma (H,\mathbb{B}))$, there exists $C>0$ such that, for every 2-atom $a$ satisfying $(Aii)$,
$$
\|\mathcal{G}_\mathbb{B}^\lambda (a)\|_{L^1((0,\infty ),\gamma (H,\mathbb{B}))}\leq C.
$$
Then, by taking into account that $\mathcal{G}_\mathbb{B}^\lambda $ is bounded from $L^1((0,\infty ),\mathbb{B})$ into $L^{1,\infty }((0,\infty ),\gamma (H,\mathbb{B}))$, we conclude that
$\mathcal{G}_\mathbb{B}^\lambda $ is bounded from $H^1_{\rm o}((0, \infty ),\mathbb{B})$ into $L^1((0,\infty ),\gamma (H,\mathbb{B}))$.

Since the Hardy space $H^1_{\rm o}((0,\infty ),\mathbb{B})$ does not depend on $\lambda$, in order to see that $\mathcal{G}_\mathbb{B}^\lambda $ is bounded from $H^1_{\rm o}((0,\infty ),\mathbb{B})$ into $H^1_{\rm o}((0,\infty ),\gamma (H,\mathbb{B}))$ it is enough to show that $P_*^{\lambda +1}\circ\mathcal{G}_\mathbb{B}^\lambda $ is bounded from $H^1_{\rm o}((0,\infty ),\mathbb{B})$ into $L^1(0,\infty )$, where
$$
P_*^{\lambda +1}(f)=\sup_{s>0}\|P_s^{\lambda +1}(f)\|_{\gamma (H,\mathbb{B})},\quad f\in L^p((0,\infty ),\gamma (H,\mathbb{B})),\;\,1\leq p<\infty .
$$
Since the maximal operator $P_*^{\lambda +1}$ is bounded from $L^1((0,\infty ),\gamma (H,\mathbb{B}))$ into $L^{1,\infty }(0,\infty )$, the operator $P_*^{\lambda +1}\circ \mathcal{G}_\mathbb{B}^\lambda$ is bounded from $H^1_{\rm o}((0,\infty ),\mathbb{B})$ into $L^{1,\infty }(0,\infty )$. By using vector-valued Calder\'on-Zygmund theory as in the proof of Lemma \ref{Lemap14} we can see that $P_*^{\lambda +1}\circ \mathcal{G}_\mathbb{B}^\lambda$ is bounded from $H^1((0,\infty ),\mathbb{B})$ into $L^1(0,\infty )$. Moreover, as above, we can prove that there exists $C>0$ such that, for every $\delta >0$ and $b\in \mathbb{B}$, $\|b\|_\B=1$,
$$
\|P_*^{\lambda +1}(\mathcal{G}_\mathbb{B}^\lambda (a))\|_{L^1(0,\infty )}\leq C,
$$
being $a=\frac{b}{\delta }\chi _{(0,\delta )}$. Then, we conclude that $P_*^{\lambda +1}\circ \mathcal{G}_\mathbb{B}^\lambda$ is bounded from $H^1_{\rm o}((0,\infty ),\mathbb{B})$ into $L^1(0,\infty )$.

Thus the proof of Theorem \ref{Th:2} related to Hardy spaces is complete.
\subsection{} Our objective is to show that $\mathcal{G}_\mathbb{B}^\lambda $ is bounded from $BMO_{\rm o}((0,\infty ),\mathbb{B})$ into $BMO_{\rm o}((0,\infty ),\gamma (H,\mathbb{B}))$.
As in Section \ref{subsec:2.2} this is naturally divided into the following two lemmas.
\begin{Lem}\label{Lemap29}
Assume that $\B$ is a UMD Banach space and $\lambda >0$. We can find $C>0$ such that, for every $r>0$,
\begin{equation}\label{61}
\frac{1}{r}\int_0^r\|\mathcal{G}_\B^\lambda (f)(\cdot ,x)\|_{\gamma (H,\B)}dx\leq C\|f\|_{BMO_{\rm o}((0,\infty ),\B)},\quad f\in BMO_{\rm o}((0,\infty ), \B).
\end{equation}
\end{Lem}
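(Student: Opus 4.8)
The plan is to mimic the proof of Lemma~\ref{Lemap17}, replacing the kernel $K_\lambda^\beta$ by the $(\B,\gamma(H,\B))$-Calder\'on-Zygmund kernel $M^\lambda$ from \eqref{Mlambda} and exploiting the size estimates obtained in the proof of Lemma~\ref{Lemp25}. First I would check that, for $f\in BMO_{\rm o}((0,\infty),\B)$, the pointwise bounds \eqref{I1b} and \eqref{I2b} on $M^\lambda(t;x,y)$, combined with the sublogarithmic growth of $\|f(y)\|_\B$ permitted for a $BMO_{\rm o}$ function (controlled by the dyadic-sum argument already carried out in \eqref{H19A}), make the integral
$$\mathcal{G}_\B^\lambda(f)(t,x)=\int_0^\infty M^\lambda(t;x,y)f(y)\,dy,\quad t,x\in(0,\infty),$$
absolutely convergent and justify interchanging the $H$-Bochner integral with $f$, so that $\mathcal{G}_\B^\lambda(f)(\cdot,x)\in\gamma(H,\B)$ for a.e.\ $x$. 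This representation and the accompanying Fubini argument are the only mildly delicate point; everything else reduces to estimates already in hand.

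Next, fix $r>0$ and split $\mathcal{G}_\B^\lambda(f)=\mathcal{G}_{\B,1}^\lambda(f)+\mathcal{G}_{\B,2}^\lambda(f)$, where the local part is $\mathcal{G}_{\B,1}^\lambda(f)(t,x)=\int_0^{2r}M^\lambda(t;x,y)f(y)\,dy$. For the local part I would use that $\mathcal{G}_\B^\lambda$ is bounded from $L^2((0,\infty),\B)$ into $L^2((0,\infty),\gamma(H,\B))$ (\cite[Theorem~1.3]{BCR3}) and estimate, by Cauchy-Schwarz,
$$\frac{1}{r}\int_0^r\|\mathcal{G}_{\B,1}^\lambda(f)(\cdot,x)\|_{\gamma(H,\B)}\,dx\leq\Big(\frac{1}{r}\int_0^\infty\|\mathcal{G}_\B^\lambda(f\chi_{(0,2r)})(\cdot,x)\|_{\gamma(H,\B)}^2\,dx\Big)^{1/2}\leq C\Big(\frac{1}{r}\int_0^{2r}\|f(y)\|_\B^2\,dy\Big)^{1/2},$$
the last quantity being bounded by $C\|f\|_{BMO_{\rm o}((0,\infty),\B)}$ by the John-Nirenberg inequality applied on $(-2r,2r)$ to the odd extension $f_{\rm o}$, whose average over that symmetric interval vanishes.

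For the global part, note that if $x\in(0,r)$ and $y\geq2r$ then $|x-y|\geq y/2$, hence \eqref{H37} and \eqref{H38} yield
$$\|M^\lambda(\cdot;x,y)\|_H\leq C\Big(\frac{(xy)^{\lambda+1}}{|x-y|^{2\lambda+3}}+\frac{x^\lambda y^{\lambda+1}}{|x-y|^{2\lambda+2}}\Big)\leq C\frac{x^\lambda}{y^{\lambda+1}}.$$
Since $\gamma(H,\C)=H$, applying Minkowski's inequality inside the $\gamma$-norm exactly as for $G_{\B,2}^{\lambda,\beta}$ in the proof of Lemma~\ref{Lemap17} gives, for $x\in(0,r)$,
$$\|\mathcal{G}_{\B,2}^\lambda(f)(\cdot,x)\|_{\gamma(H,\B)}\leq C\int_{2r}^\infty\|M^\lambda(\cdot;x,y)\|_H\|f(y)\|_\B\,dy\leq Cx^\lambda\int_{2x}^\infty\frac{\|f(y)\|_\B}{y^{\lambda+1}}\,dy\leq C\|f\|_{BMO_{\rm o}((0,\infty),\B)},$$
where I used $x\leq r$ and then the dyadic estimate already performed in \eqref{H19A} (compare \eqref{H26B}). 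Averaging over $x\in(0,r)$ and combining with the local bound yields \eqref{61}. Since the only new inputs---the kernel bounds \eqref{H37}, \eqref{H38} and the $L^2$-boundedness of $\mathcal{G}_\B^\lambda$---are already established, no serious obstacle remains.
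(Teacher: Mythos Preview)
Your proposal is correct and follows essentially the same route as the paper's proof: justify the integral representation of $\mathcal{G}_\B^\lambda(f)$ via the pointwise bounds \eqref{I1b}--\eqref{I2b}, split at $2r$, handle the local piece with the $L^2((0,\infty),\B)\to L^2((0,\infty),\gamma(H,\B))$ boundedness from \cite[Theorem~1.3]{BCR3} together with John--Nirenberg, and control the far piece by \eqref{H37}--\eqref{H38} plus the dyadic summation of \eqref{H19A}. The paper's argument is identical in structure and in the ingredients used.
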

\begin{proof}
Let $f\in BMO_{\rm o}((0,\infty ),\mathbb{B})$. According to (\ref{H30A}), (\ref{I1b}) and (\ref{I2b}) we get
$$
|D_{\lambda ,x}^*P_t^{\lambda +1}(x,y)|\leq Ct\left(\frac{(xy)^{\lambda +1}}{(|x-y|+t)^{2\lambda +5}}+\frac{y^{\lambda +1}x^\lambda}{(|x-y|+t)^{2\lambda +4}}\right),\quad t,x,y\in (0,\infty ).
$$
Since $\int_0^\infty (1+y^2)^{-1}\|f(y)\|_\mathbb{B}dy<\infty$, we can write
$$
\mathcal{G}_\mathbb{B}^\lambda (f)(t,x)=\int_0^\infty tD_{\lambda ,x}^*P_t^{\lambda +1}(x,y)f(y)dy,\quad t,x\in (0,\infty ).
$$
Let $r>0$. Since $\mathcal{G}_\mathbb{B}^\lambda $ is bounded from $L^2((0,\infty ),\mathbb{B})$ into
$L^2((0,\infty ),\gamma (H,\mathbb{B}))$ (\cite[Theorem 1.3]{BCR3}) we get
\begin{eqnarray*}
\frac{1}{r}\int_0^r\|\mathcal{G}_\mathbb{B}^\lambda (f\chi _{(0,2r)})(\cdot ,x)\|_{\gamma (H,\mathbb{B})}dx&\leq &\frac{1}{\sqrt{r}}\|\mathcal{G}_\mathbb{B}^\lambda (f\chi _{(0,2r)})\|_{L^2((0,\infty ),\gamma (H,\mathbb{B}))}\\
&\leq &\frac{C}{\sqrt{r}}\|f\chi _{(0,2r)}\|_{L^2((0,\infty ),\mathbb{B})}\leq C\|f\|_{BMO_{\rm o}((0,\infty ),\mathbb{B})}.
\end{eqnarray*}
Moreover, by using (\ref{H37}) and (\ref{H38}), as in (\ref{H19A}) we obtain
\begin{eqnarray*}
\frac{1}{r}\int_0^r\|\mathcal{G}_\mathbb{B}^\lambda (f\chi _{(2r,\infty )})(\cdot ,x)\|_{\gamma (H,\mathbb{B})}dx&\leq&\frac{1}{r}\int_0^r\int_{2r}^\infty \|M^\lambda (\cdot ;x,y)\|_H\|f(y)\|_\mathbb{B}dydx\\
&\hspace{-6cm}\leq&\hspace{-3cm}\frac{1}{r}\int_0^r\int_{2r}^\infty \left(\frac{(xy)^{\lambda +1}}{|x-y|^{2\lambda +3}}+\frac{y^{\lambda +1}x^\lambda}{|x-y|^{2\lambda +2}}\right)\|f(y)\|_\mathbb{B}dydx\\
&\hspace{-6cm}\leq&\hspace{-3cm}\frac{1}{r}\int_0^r\int_{2x}^\infty \frac{x^\lambda }{y^{\lambda +1}}\|f(y)\|_\mathbb{B}dydx\leq C\|f\|_{BMO_{\rm o}((0,\infty ),\mathbb{B})}.
\end{eqnarray*}
Hence, (\ref{61}) holds with $C$ independent of $f$ and $r$, and then, $\mathcal{G}_\mathbb{B}^\lambda (f)(\cdot ,x)\in \gamma (H,\mathbb{B})$, a.e. $x\in (0,\infty )$.
\end{proof}

\begin{Lem}\label{Lep30}
Let $\B$ be a UMD Banach space and $\lambda >0$. $\mathcal{G}_\B^\lambda$ is a bounded operator from $BMO_{\rm o}((0,\infty ),\B)$ into $BMO((0,\infty ),\gamma (H,\B))$.
\end{Lem}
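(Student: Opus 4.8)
The plan is to follow the proof of Lemma~\ref{Lemap19}, with the extra care demanded by the first--order operator $D^{*}_{\lambda,x}$: peel off bounded ``tail'' pieces, compare the remaining local part with a classical gradient--Poisson operator applied to the odd extension $f_{\rm o}$, and close with the standard Calder\'on--Zygmund $\rightarrow BMO$ argument. Fix $f\in BMO_{\rm o}((0,\infty),\B)$. By Lemma~\ref{Lemap29} (together with the pointwise bounds \eqref{H30A}, \eqref{Ical}, which make $\mathcal G^{\lambda}_{\B}(f)(t,x)=\int_{0}^{\infty}M^{\lambda}(t;x,y)f(y)\,dy$ well defined in $\B$ for each $t$) we know $\mathcal G^{\lambda}_{\B}(f)(\cdot,x)\in\gamma(H,\B)$ for a.e.\ $x$. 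From \eqref{H37}--\eqref{H38}, $\|M^{\lambda}(\cdot;x,y)\|_{H}\le C\,y^{\lambda+1}x^{-(\lambda+2)}$ for $0<y<x/2$ and $\le C\,x^{\lambda}y^{-(\lambda+1)}$ for $y>2x$, so the dyadic--annulus estimates of \eqref{H19A}, \eqref{H26B}, \eqref{H26C} show that $\int_{0}^{x/2}M^{\lambda}(\cdot;x,y)f(y)\,dy$ and $\int_{2x}^{\infty}M^{\lambda}(\cdot;x,y)f(y)\,dy$ belong to $L^{\infty}((0,\infty),\gamma(H,\B))$ with norm $\le C\|f\|_{BMO_{\rm o}((0,\infty),\B)}$. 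Hence it suffices to show that the local operator $\mathcal G^{\lambda}_{\B,{\rm loc}}(f)(t,x):=\int_{x/2}^{2x}M^{\lambda}(t;x,y)f(y)\,dy$ (the difference of the two $\gamma(H,\B)$--valued functions just bounded) lies in $BMO((0,\infty),\gamma(H,\B))$ with the same bound.

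For the local part, since $D^{*}_{\lambda,x}=-x^{-\lambda}\partial_{x}x^{\lambda}$, the natural classical analogue of $M^{\lambda}(t;x,y)=tD^{*}_{\lambda,x}P^{\lambda+1}_{t}(x,y)$ is $-t\,\partial_{x}P_{t}(x-y)$, with $P_{t}(z)=t/(\pi(t^{2}+z^{2}))$. Comparing the angular--integral representation of $M^{\lambda}$ with that of the classical kernel, as in \cite[Section~3.1]{BCR3}, one gets for $x/2<y<2x$ a decomposition $M^{\lambda}(t;x,y)=-t\,\partial_{x}P_{t}(x-y)+E^{\lambda}(t;x,y)$, where the genuine--derivative part matches $-t\,\partial_{x}P_{t}(x-y)$ up to an $H$--error of size $\le C/x$, while the remaining pieces (the $\tfrac{\lambda}{x}P^{\lambda+1}_{t}$--term coming from $D^{*}_{\lambda,x}$, together with the contributions of $2xy(1-\cos\theta)$ in the kernel) are each $\tfrac1x$ times a Poisson--kernel--type object, so that $\|E^{\lambda}(\cdot;x,y)\|_{H}\le\tfrac{C}{x}\big(1+\log_{+}(x/|x-y|)\big)^{1/2}$ --- using $|P^{\lambda+1}_{t}(x,y)|\le C\,t/(t^{2}+|x-y|^{2})$ for $0<t\lesssim x$ and the $(xy)^{\lambda+1}$--decay for $t\gtrsim x$. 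Integrating against $f$ over $(x/2,2x)$: the $O(1/x)$ part contributes at most $\tfrac Cx\int_{x/2}^{2x}\|f(y)\|_{\B}\,dy\le C\|f\|_{BMO_{\rm o}}$, and for the logarithmic part one splits $\|f(y)\|_{\B}\le\|f(y)-f_{(x/2,2x)}\|_{\B}+\|f_{(x/2,2x)}\|_{\B}$ and applies Cauchy--Schwarz and the John--Nirenberg inequality (note $\|f_{(x/2,2x)}\|_{\B}\le C\|f\|_{BMO_{\rm o}}$ by $(Bi)$ and $\int_{x/2}^{2x}\log_{+}(x/|x-y|)\,dy\le Cx$). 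This shows $\mathcal G^{\lambda}_{\B,{\rm loc}}(f)(\cdot,x)+\int_{x/2}^{2x}t\,\partial_{x}P_{t}(x-y)f(y)\,dy\in L^{\infty}((0,\infty),\gamma(H,\B))$ with norm $\le C\|f\|_{BMO_{\rm o}}$, so we are reduced to the classical truncated operator $f\mapsto-\int_{x/2}^{2x}t\,\partial_{x}P_{t}(x-y)f(y)\,dy$.

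Set $\mathcal M_{\B}(g)(t,x)=-\int_{\R}t\,\partial_{x}P_{t}(x-y)g(y)\,dy$ for $g\colon\R\to\B$ and apply it to $f_{\rm o}$. Splitting $\mathcal M_{\B}(f_{\rm o})$ over the ranges $0<y<x/2$, $x/2<y<2x$, $y>2x$ and pairing the $x-y$ and $x+y$ contributions exactly as in the $I_{2},I_{3},I_{4}$ analysis of Lemma~\ref{Lemap19} --- using $\|t\,\partial_{x}[P_{t}(x-y)-P_{t}(x+y)]\|_{H}\le C\min(y/x^{2},x/y^{2})$ and $\|t\,\partial_{x}P_{t}(x+y)\|_{H}\le C/(x+y)$ together with conditions $(Bi)$, $(Bii)$ --- the difference $\mathcal M_{\B}(f_{\rm o})(t,x)+\int_{x/2}^{2x}t\,\partial_{x}P_{t}(x-y)f(y)\,dy$ is a bounded $\gamma(H,\B)$--valued function on $(0,\infty)$ with norm $\le C\|f\|_{BMO_{\rm o}}$. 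Finally, the $\gamma(H,\B)$--valued convolution kernel $-t\,\partial_{x}P_{t}$ has $H$--size $\le C/|x-y|$ and $H$--derivative $\le C/|x-y|^{2}$, $\mathcal M_{\B}$ is bounded from $L^{2}(\R,\B)$ into $L^{2}(\R,\gamma(H,\B))$ (cf.\ \cite[Theorem~4.2]{KaWe} and \cite[proof of Theorem~1.2]{BCR3}, since $\partial_{x}P_{t}$ is the $t$--derivative of the conjugate Poisson kernel), and $\mathcal M_{\B}$ annihilates constants because $\partial_{x}\int_{\R}P_{t}(x-y)\,dy=0$. Running the standard argument --- for an interval $I$ decompose $f_{\rm o}=(f_{\rm o}-f_{I})\chi_{2I}+(f_{\rm o}-f_{I})\chi_{\R\setminus 2I}+f_{I}$, the first summand handled by $L^{2}$--boundedness as in \eqref{H25}, the last by the cancellation as in \eqref{H26}, the middle by the $H$--derivative estimate as in \eqref{H27b} --- yields $\mathcal M_{\B}(f_{\rm o})\in BMO(\R,\gamma(H,\B))$ with norm $\le C\|f_{\rm o}\|_{BMO(\R,\B)}=C\|f\|_{BMO_{\rm o}((0,\infty),\B)}$. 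Restricting to $(0,\infty)$ and chaining the reductions above gives $\mathcal G^{\lambda}_{\B}(f)\in BMO((0,\infty),\gamma(H,\B))$ with $\|\mathcal G^{\lambda}_{\B}(f)\|_{BMO((0,\infty),\gamma(H,\B))}\le C\|f\|_{BMO_{\rm o}((0,\infty),\B)}$.

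The step I expect to be the main obstacle is the control of $E^{\lambda}$ in the second paragraph, specifically the low--order term $\tfrac{\lambda t}{x}P^{\lambda+1}_{t}(x,y)$ forced by $D^{*}_{\lambda,x}$: its $H$--norm genuinely exceeds the Calder\'on--Zygmund size bound $C/|x-y|$ by a factor $(\log(x/|x-y|))^{1/2}$ as $y\to x$, so the kernel of $\mathcal G^{\lambda}_{\B,{\rm loc}}$ cannot be compared pointwise in $x$ with the classical one --- this piece must be kept as an operator--level error and the logarithm absorbed through the $BMO_{\rm o}$--regularity of $f$ on fixed--ratio intervals. Establishing the $\partial_{x}$--version of the Bessel--versus--classical Poisson comparison of \cite[Section~3.1]{BCR3} is the other technical point, but it is routine bookkeeping with the angular--integral representation; the rest is a direct transcription of Lemma~\ref{Lemap19}.
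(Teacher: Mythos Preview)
Your overall architecture matches the paper's, but there is a genuine error in the choice of extension. You apply the classical operator $\mathcal M_\B$ to the \emph{odd} extension $f_{\rm o}$ and assert
\[
\big\|\,t\,\partial_x\bigl[P_t(x-y)-P_t(x+y)\bigr]\big\|_H\le C\,\frac{x}{y^{2}},\qquad y>2x.
\]
This bound is false. Since $\partial_xP_t(z)=-\dfrac{2tz}{\pi(t^{2}+z^{2})^{2}}$ is odd in $z$, for $y>2x$ the two terms in $P_t'(x-y)-P_t'(x+y)$ have the \emph{same} sign (each is $\sim ty/(t^{2}+y^{2})^{2}$) and do not cancel; a direct computation gives $\|t\,\partial_x[P_t(x-y)-P_t(x+y)]\|_H\sim C/y$. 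As the paper already remarks after \eqref{H24}, the decay $1/y$ is not enough to dominate $\int_{2x}^{\infty}\|f(y)\|_\B\,y^{-1}\,dy$ for $f\in BMO_{\rm o}$ (try $f$ constant). Your ``$I_2$'' piece therefore is not bounded in $\gamma(H,\B)$, and the reduction to $\mathcal M_\B(f_{\rm o})$ breaks down; in fact $\mathcal M_\B(f_{\rm o})(\cdot,x)$ need not even lie in $\gamma(H,\B)$ pointwise.

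The fix is exactly what the paper does: use the \emph{even} extension $f_{\rm e}$. Then the relevant combination is $\partial_x[P_t(x-y)+P_t(x+y)]$, and now the two terms have opposite signs; the mean value theorem gives (cf.\ \eqref{H40})
\[
\big|\partial_x[P_t(x-y)+P_t(x+y)]\big|\le C\,\frac{tx}{(t+|x-y|)^{4}},
\]
hence $H$--norm $\le Cx/y^{2}$ for $y>2x$, and the far tail is controlled. The rule of thumb is that the parity flips relative to Lemma~\ref{Lemap19}: there the kernel $t^{\beta}\partial_t^{\beta}P_t$ is even in the spatial variable, so the odd extension produces the cancellation; here $t\,\partial_xP_t$ is odd, so the even extension is required. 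Once you make this single change (and compare $M^{\lambda}$ with $-t\,\partial_xP_t(x-y)$ on the local region via the decomposition $M^{\lambda}=M_1^{\lambda}+M_2^{\lambda}$, $M_1^{\lambda}=M_{1,1}^{\lambda}+M_{1,2}^{\lambda}+M_{1,3}^{\lambda}$ as in the paper, absorbing the logarithm from \eqref{M2} by Cauchy--Schwarz), the rest of your outline goes through essentially as written.
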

\begin{proof}
Let $f\in BMO_{\rm o}((0,\infty ),\B)$. We consider the even extension $f_{\rm e}$ of the function $f$ to $\R$. Note that $f_{\rm e}\in BMO(\mathbb{R},\mathbb{B})$.

We define
$$
\mathcal{G}_\mathbb{B}(f_{\rm e})(t,x)=-\int_\R t\partial _xP_t(x-y)f_{\rm e}(y)dy,\quad t\in (0,\infty )\mbox{ and }x\in \mathbb{R},
$$
where $P_t(z)=\frac{1}{\pi }\frac{t}{t^2+z^2}$, $t\in (0,\infty )$ and $z\in \mathbb{R}$.

We have that
$$
\mathcal{G}_\mathbb{B}(f_{\rm e})(t,x)=-\int_0^\infty t\partial _x[P_t(x-y)+P_t(x+y)]f(y)dy,\quad t\in (0,\infty )\mbox{ and } x\in \R .
$$
By using mean value theorem we obtain
\begin{eqnarray}\label{H40}
\left|\partial _x[P_t(x-y)+P_t(x+y)]\right|&=&\frac{2t}{\pi }\left|\frac{y-x}{(t^2+(x-y)^2)^2}-\frac{x+y}{(t^2+(x+y)^2)^2}\right|\nonumber\\
&\leq&C\frac{tx}{(t+|x-y|)^4},\quad t,x,y\in (0,\infty ).
\end{eqnarray}
We split $\mathcal{G}_\mathbb{B} (f_{\rm e})$ as follows:
\begin{eqnarray}\label{30.1}
\mathcal{G}_\mathbb{B}(f_{\rm e})(t,x)&=&-\int_0^{x/2}t\partial _x[P_t(x-y)+P_t(x+y)]f(y)dy\nonumber\\
&&-\int_{2x}^\infty t\partial _x[P_t(x-y)+P_t(x+y)]f(y)dy\nonumber\\
&&-\int_{x/2}^{2x}t\partial _xP_t(x+y)f(y)dy-\int_{x/2}^{2x}t\partial _xP_t(x-y)f(y)dy\nonumber\\
&=&\sum_{j=1}^4I_j(f)(t,x),\quad t,x\in (0,\infty ).
\end{eqnarray}
According to (\ref{H40}) we get
\begin{eqnarray*}
\|I_1(f)(\cdot ,x)\|_{\gamma (H,\mathbb{B})}&\leq&Cx\int_0^{x/2}\|f(y)\|_\mathbb{B}\left(\int_0^\infty \frac{t^3}{(t+x)^8}dt\right)^{1/2}dy\\
&\leq&\frac{C}{x}\int_0^{x/2}\|f(y)\|_\mathbb{B}dy\leq C\|f\|_{BMO_{\rm o}((0,\infty ),\mathbb{B})},\quad x\in (0,\infty ),
\end{eqnarray*}
and, as in (\ref{H24})
\begin{eqnarray*}
\|I_2(f)(\cdot ,x)\|_{\gamma (H,\mathbb{B})}&\leq&Cx\int_{2x}^\infty \|f(y)\|_\mathbb{B}\left(\int_0^\infty \frac{t^3}{(t+y)^8}dt\right)^{1/2}dy\\
&\leq&Cx\int_{2x}^\infty \frac{\|f(y)\|_\mathbb{B}}{y^2}dy\leq C\|f\|_{BMO_{\rm o}((0,\infty ),\mathbb{B})},\quad x\in (0,\infty ).
\end{eqnarray*}
We also have that
\begin{eqnarray*}
\|I_3(f)(\cdot ,x)\|_{\gamma (H,\mathbb{B})}&\leq&C\int_{x/2}^{2x}\left\|t\partial _xP_t(x+y)\right\|_H\|f(y)\|_\mathbb{B}dy\\
&\leq&C\int_{x/2}^{2x}\|f(y)\|_\mathbb{B}\left(\int_0^\infty \frac{t^3}{(t+x+y)^6}dt\right)^{1/2}dy\\
&\leq&\frac{C}{x}\int_{x/2}^{2x}\|f(y)\|_\mathbb{B}dy\leq C\|f\|_{BMO_{\rm o}((0,\infty ),\mathbb{B})},\quad x\in (0,\infty ).
\end{eqnarray*}

We are going to show that $\mathcal{G}_\mathbb{B} (f_{\rm e})\in BMO((0,\infty ),\gamma (H,\mathbb{B}))$. In order to do this we proceed as in Section 2.2. Let $0<r<s<\infty$. We define $I=(r,s)$ and $2I=(x_I-2d_I,x_I+2d_I)$, where $x_I=(r+s)/2$ and $d_I=(s-r)/2$, and we decompose $f_{\rm e}$ as follows:
$$
f_{\rm e}=(f_{\rm e}-f_I)\chi _{2I}+(f_{\rm e}-f_I)\chi _{(0,\infty )\setminus 2I}+f_I=f_1+f_2+f_3.
$$
Since $\mathcal{G}_\mathbb{B}$ is a bounded operator from $L^2(\mathbb{R},\mathbb{B})$ into $L^2(\mathbb{R},\gamma (H,\mathbb{B}))$ (\cite[Theorem 4.2]{KaWe}) we deduce that
\begin{equation}\label{G1}
\frac{1}{|I|}\int_I\|\mathcal{G}_\mathbb{B}(f_1)(\cdot ,x)\|_{\gamma (H,\mathbb{B})}dx\leq C\|f\|_{BMO_{\rm o}((0,\infty ),\mathbb{B})}
\end{equation}
and then,  $\mathcal{G}_\mathbb{B}(f_1)(\cdot ,x)\in \gamma (H,\mathbb{B})$, a.e. $x\in I$. Moreover, since $\int_\mathbb{R}P_t(x-y)dy=1$, $t\in (0,\infty )$ and $x\in \mathbb{R}$, $\mathcal{G}_\mathbb{B}(f_3)=0$. Finally, we study $\mathcal{G}_\mathbb{B}(f_2)$. We have that
$$
\mathcal{G}_\mathbb{B}^\lambda (f)(t,x)-\mathcal{G}_\mathbb{B} (f_{\rm e})(t,x)=\sum_{j=1}^3J_j^\lambda (f)(t,x)-\sum_{j=1}^3I_j(f)(t,x),\quad t,x\in (0,\infty ),
$$
where
$$
J_1^\lambda (f)(t,x)=\int_0^{x/2}M^\lambda (t;x,y)f(y)dy,\quad t,x\in (0,\infty ),
$$
$$
J_2^\lambda (f)(t,x)=\int_{2x}^\infty M^\lambda (t;x,y)f(y)dy,\quad t,x\in (0,\infty ),
$$
and
$$
J_3^\lambda (f)(t,x)=\int_{x/2}^{2x}[M^\lambda (t;x,y)+t\partial _xP_t(x-y)]f(y)dy,\quad t,x\in (0,\infty ).
$$
Here, $I_j$, $j=1,2,3$, are defined as in \eqref{30.1} and $M^\lambda$ is the kernel in (\ref{Mlambda}).

According to (\ref{H37}) and (\ref{H38}) we get
$$
\|M^\lambda (\cdot ;x,y)\|_H\leq C\left(\frac{(xy)^{\lambda +1}}{|x-y|^{2\lambda +3}}+\frac{y^{\lambda +1}x^\lambda }{|x-y|^{2\lambda +2}}\right),\quad x,y\in (0,\infty ),\;x\not =y.
$$
It follows that, for every $x\in (0,\infty )$
$$
\|J_1^\lambda (f)(\cdot,x)\|_{\gamma (H,\mathbb{B})}\leq \int_0^{x/2}\|M^\lambda (\cdot ;x,y)\|_H\|f(y)\|_\mathbb{B}dy
\leq \frac{C}{x}\int_0^{x/2}\|f(y)\|_\mathbb{B}dy\leq C\|f\|_{BMO_{\rm o}((0,\infty ),\mathbb{B})},
$$
and,  as in (\ref{H19A}),
$$
\|J_2^\lambda (f)(\cdot,x)\|_{\gamma (H,\mathbb{B})}\leq Cx^\lambda \int_{2x}^\infty \frac{\|f(y)\|_\mathbb{B}}{y^{\lambda +1}}dy\leq C\|f\|_{BMO_{\rm o}((0,\infty ),\mathbb{B})},\quad x\in (0,\infty ).
$$
We write
\begin{eqnarray*}
M^\lambda (t;x,y)&=&-\frac{2(\lambda +1)}\pi t^2y^{\lambda +1}\left[(2\lambda +1)x^\lambda \int_0^\pi \frac{(\sin \theta )^{2\lambda +1}}{((x-y)^2+t^2+2xy(1-\cos \theta ))^{\lambda +2}}d\theta \right.\\
&&-\left.2(\lambda +2) x^{\lambda +1}\int_0^\pi  \frac{(\sin \theta )^{2\lambda +1}((x-y)+y(1-\cos \theta ))}{((x-y)^2+t^2+2xy(1-\cos \theta ))^{\lambda +3}}d\theta \right]\\
&=&M_1^\lambda (t;x,y)+M_2^\lambda (t;x,y),\quad t,x,y\in (0,\infty ),
\end{eqnarray*}
where
\begin{eqnarray*}
M_1^\lambda (t;x,y)&=&-\frac{2(\lambda +1)}\pi t^2y^{\lambda +1}\left[(2\lambda +1)x^\lambda \int_0^{\pi /2}\frac{(\sin \theta )^{2\lambda +1}}{((x-y)^2+t^2+2xy(1-\cos \theta ))^{\lambda +2}}d\theta \right.\\
&&-\left.2(\lambda +2) x^{\lambda +1}\int_0^{\pi /2}\frac{(\sin \theta )^{2\lambda +1}((x-y)+y(1-\cos \theta ))}{((x-y)^2+t^2+2xy(1-\cos \theta ))^{\lambda +3}}d\theta \right],\quad t,x,y\in (0,\infty ).
\end{eqnarray*}

We have that
\begin{eqnarray}\label{M1}
\|M_2^\lambda (\cdot ;x,y)\|_H&\leq &C\left[x^{2\lambda +1}\left(\int_0^\infty \frac{t^3}{(|x-y|+t+\sqrt{xy})^{4(\lambda +2)}}dt\right)^{1/2}\right.\nonumber\\
&&+\left. x^{2\lambda +2}\left(\int_0^\infty \frac{t^3}{(|x-y|+t+\sqrt{xy})^{4(\lambda +5/2)}}dt\right)^{1/2}\right]\leq \frac{C}{x},\quad 0<\frac{x}{2}<y<2x.
\end{eqnarray}
We now write
\begin{eqnarray*}
M_1^\lambda (t;x,y)&=&-\frac{2(\lambda +1)}{\pi }t^2y^{\lambda +1}\left[(2\lambda +1)x^\lambda \int_0^{\pi /2} \frac{(\sin \theta )^{2\lambda +1}}{((x-y)^2+t^2+2xy(1-\cos \theta ))^{\lambda +2}}d\theta \right.\\
&&-2(\lambda +2)x^{\lambda +1}\int_0^{\pi /2} \frac{(\sin \theta )^{2\lambda +1}y(1-\cos \theta )}{((x-y)^2+t^2+2xy(1-\cos \theta ))^{\lambda +3}}d\theta\\
&&-\left. 2(\lambda +2)x^{\lambda +1}\int_0^{\pi /2}  \frac{(x-y)(\sin \theta )^{2\lambda +1}}{((x-y)^2+t^2+2xy(1-\cos \theta ))^{\lambda +3}}d\theta\right]\\
&=&\sum_{j=1}^3M_{1,j}^\lambda (t;x,y),\quad t,x,y\in (0,\infty ),
\end{eqnarray*}
We obtain that
$$
|M_{1,2}^\lambda (t;x,y)|\leq C|M_{1,1}^\lambda (t;x,y)|\leq Ct^2x^\lambda y^{\lambda +1}\int_0^{\pi /2} \frac{\theta ^{2\lambda +1}}{(|x-y|+t+\sqrt{xy}\theta )^{2\lambda +4}}d\theta ,\quad t,x,y\in (0,\infty ).
$$
Then,
\begin{eqnarray}\label{M2}
\|M_{1,2}^\lambda (\cdot ;x,y)\|_H&\leq&C\|M_{1,1}^\lambda (t,x,y)\|_H\leq Cx^\lambda y^{\lambda +1}\int_0^{\pi /2}\theta ^{2\lambda +1}\left(\int_0^\infty \frac{t^3}{(|x-y|+t+\sqrt{xy}\theta )^{4\lambda +8}}dt\right)^{1/2}d\theta\nonumber\\
&\leq&Cx^\lambda y^{\lambda +1}\int_0^{\pi /2}\frac{\theta ^{2\lambda +1}}{(|x-y|+\sqrt{xy}\theta )^{2\lambda +2}}d\theta \nonumber\\
&\leq&C\sqrt{\frac{y}{x}}\int_0^{\pi /2}\frac{d\theta}{|x-y|+\sqrt{xy}\theta}\leq \frac{C}{x}\log \left(1+\frac{\sqrt{xy}}{|x-y|}\right),\quad x,y\in (0,\infty ).
\end{eqnarray}

We put
$$
J_3^\lambda (f)=M_{1,1}^\lambda(f)+M_{1,2}^\lambda(f)+M_2^\lambda (f)+H^\lambda (f),
$$
being
\begin{eqnarray*}
M_{1,j}^\lambda (f)(t,x)=\int_{x/2}^{2x}M_{1,j}^\lambda (t;x,y)f(y)dy,\quad t,x\in (0,\infty ),\;j=1,2,\\
M_2^\lambda (f)(t,x)=\int_{x/2}^{2x}M_2^\lambda (t;x,y)f(y)dy,\quad t,x\in (0,\infty ),
\end{eqnarray*}
and
$$
H^\lambda (f)(t,x)=\int_{x/2}^{2x}[M_{1,3}^\lambda (t;x,y)+t\partial_xP_t(x-y)]f(y)dy, \quad t,x\in (0,\infty ).
$$

According to (\ref{M1}) we obtain that
\begin{eqnarray*}
\|M_2^\lambda (f)(x)\|_{\gamma (H,\mathbb{B})}&\leq&C\int_{x/2}^{2x}\|M_2^\lambda (\cdot  ;x,y)\|_H\|f(y)\|_\mathbb{B}dy\\
&\leq&\frac{C}{x}\int_{x/2}^{2x}\|f(y)\|_\mathbb{B}dy\leq C\|f\|_{BMO_{\rm o}((0,\infty ),\mathbb{B})},\quad x\in (0,\infty ).
\end{eqnarray*}

Also, by taking into account (\ref{M2}) we can write
\begin{eqnarray*}
\|M_{1,j}^\lambda (f)(\cdot ,x)\|_{\gamma (H,\mathbb{B})}&\leq&\frac{C}{x}\int_{x/2}^{2x}\log \left(1+\frac{\sqrt{xy}}{|x-y|}\right)\|f(y)\|_\mathbb{B}dy\\
&\leq&C\left(\frac{1}{x}\int_{x/2}^{2x}\left(\log \left(1+\frac{\sqrt{xy}}{|x-y|}\right)\right)^2dy\right)^{1/2}\left(\frac{1}{x}\int_{x/2}^{2x}\|f(y)\|_\mathbb{B}^2dy\right)^{1/2}\\
&\leq&C\left(\int_{1/2}^{2}\left(\log \left(1+\frac{\sqrt{u}}{|1-u|}\right)\right)^2du\right)^{1/2}\|f\|_{BMO_{\rm o}((0,\infty ),\mathbb{B})}\\
&\leq&C\|f\|_{BMO_{\rm o}((0,\infty ),\mathbb{B})},\quad x\in (0,\infty )\mbox{ and }j=1,2.
\end{eqnarray*}
We now consider
$$
H^\lambda (t;x,y)=M_{1,3}^\lambda (t;x,y)+t\partial _xP_t(x-y),\quad t,x,y\in (0,\infty ).
$$
We have that
\begin{eqnarray*}
H^\lambda (t;x,y)&=&-\frac{2t^2}{\pi}\frac{x-y}{(t^2+(x-y)^2)^2}+\frac{4(\lambda +1)(\lambda +2)}{\pi}t^2(xy)^{\lambda +1}\int_0^{\pi /2} \frac{(x-y)(\sin \theta )^{2\lambda +1}}{((x-y)^2+t^2+2xy(1-\cos \theta ))^{\lambda +3}}d\theta\\
&\hspace{-4cm}=&\hspace{-2cm}-\frac{2t^2}{\pi}(x-y)\left[\frac{1}{(t^2+(x-y)^2)^2}-2(\lambda +1)(\lambda +2)(xy)^{\lambda +1}\int_0^{\pi /2} \frac{\theta ^{2\lambda +1}}{((x-y)^2+t^2+xy\theta ^2)^{\lambda +3}}d\theta\right.\\
&\hspace{-4cm}&\hspace{-2cm}+2(\lambda +1)(\lambda +2)(xy)^{\lambda +1}\int_0^{\pi /2} \theta ^{2\lambda +1}\left(\frac{1}{((x-y)^2+t^2+xy\theta ^2)^{\lambda +3}}-\frac{1}{((x-y)^2+t^2+2xy(1-\cos \theta ))^{\lambda +3}}\right)d\theta\\
&\hspace{-4cm}&\hspace{-2cm}+\left.2(\lambda +1)(\lambda +2)(xy)^{\lambda +1}\int_0^{\pi /2}\frac{\theta ^{2\lambda +1}-(\sin \theta)^{2\lambda +1}}{((x-y)^2+t^2+2xy(1-\cos \theta ))^{\lambda +3}}d\theta \right]\\
&\hspace{-4cm}=&\hspace{-2cm}\sum_{j=1}^3H_j^\lambda (t,x,y),\quad t,x,y\in (0,\infty ).
\end{eqnarray*}
By using mean value theorem we get
\begin{eqnarray*}
|H_2^\lambda (t;x,y)|&\leq &Ct^2|x-y|(xy)^{\lambda +1}\int_0^{\pi /2} \frac{\theta ^{2\lambda +1}xy|1-\cos \theta -\theta ^2/2|}{((x-y)^2+t^2+xy\theta ^2)^{\lambda +4}}d\theta\\
&\leq&Ct^2|x-y|(xy)^{\lambda +2}\int_0^{\pi /2} \frac{\theta ^{2\lambda +5}}{(|x-y|+t+\sqrt{xy}\theta )^{2\lambda +8}}d\theta\\
&\leq&Ct^2|x-y|(xy)^{\lambda +1}\int_0^{\pi /2} \frac{\theta ^{2\lambda +3}}{(|x-y|+t+\sqrt{xy}\theta )^{2\lambda +6}}d\theta ,\quad t,x,y\in (0,\infty  ),
\end{eqnarray*}
and
$$
|H_3^\lambda (t;x,y)|\leq Ct^2|x-y|(xy)^{\lambda +1}\int_0^{\pi /2} \frac{\theta ^{2\lambda +3}}{(|x-y|+t+\sqrt{xy}\theta )^{2\lambda +6}}d\theta ,\quad t,x,y\in (0,\infty  ).
$$
Then
\begin{eqnarray*}
\|H_j^\lambda (t;x,y)\|_H&\leq &C(xy)^{\lambda +1}\int_0^\infty \theta ^{2\lambda +3}\left(\int_0^\infty \frac{t^3}{(|x-y|+t+\sqrt{xy}\theta )^{4\lambda +10}}dt\right)^{1/2}d\theta\\
&\leq&C(xy)^{\lambda +1}\int_0^{\pi /2}\frac{\theta ^{2\lambda +3}}{(|x-y|+\sqrt{xy}\theta )^{2\lambda +3}}d\theta\\
&\leq&C\frac{(xy)^{\lambda +1}}{(\sqrt{xy})^{2\lambda +3}}\leq \frac{C}{x},\quad 0<\frac{x}{2}<y<2x,\;j=2,3.
\end{eqnarray*}
On the other hand, after straightforward change of variables we get
\begin{eqnarray*}
H_1^\lambda (t;x,y)&=&-\frac{2t^2}{\pi}(x-y)\left[\frac{1}{(t^2+(x-y)^2)^2}-2(\lambda +1)(\lambda +2)(xy)^{\lambda +1}\right.\\
&&\left.\times\left(\int_0^\infty -\int_{\pi /2}^\infty \right) \frac{\theta ^{2\lambda +1}}{((x-y)^2+t^2+xy\theta ^2)^{\lambda +3}}d\theta\right]\\
&=&-\frac{2t^2}{\pi}(x-y)\left[\frac{1}{(t^2+(x-y)^2)^2}\left(1-2(\lambda +1)(\lambda +2)\int_0^\infty \frac{u^{2\lambda +
1}}{(1+u^2)^{\lambda +3}}du\right)\right.\\
&&\left. +2(\lambda +1)(\lambda +2)(xy)^{\lambda +1}\int_{\pi /2}^\infty \frac{\theta ^{2\lambda +1}}{((x-y)^2+t^2+xy\theta ^2)^{\lambda +3}}d\theta\right]\\
&=&-\frac{4(\lambda +1)(\lambda +2)}{\pi}t^2(x-y)(xy)^{\lambda +1}\int_{\pi /2}^\infty \frac{\theta ^{2\lambda +1}}{((x-y)^2+t^2+xy\theta ^2)^{\lambda +3}}d\theta,\quad t,x,y\in (0,\infty ).
\end{eqnarray*}
It follows that
\begin{eqnarray*}
\|H_1 ^\lambda (t;x,y)\|_H&\leq &C(xy)^{\lambda +1}\int_{\pi /2}^\infty \theta ^{2\lambda +1}\left(\int_0^\infty \frac{t^3}{(|x-y|+t+\sqrt{xy}\theta )^{4\lambda +10}}dt\right)^{1/2}d\theta\\
&\leq&C(xy)^{\lambda +1}\int_{\pi /2}^\infty \frac{\theta ^{2\lambda +1}}{(|x-y|+\sqrt{xy}\theta )^{2\lambda +3}}d\theta\\
&\leq&C\frac{(xy)^{\lambda +1}}{(\sqrt{xy})^{2\lambda +3}}\int_{\pi /2}^\infty \frac{d\theta }{\theta ^2}\leq \frac{C}{x},\quad 0<\frac{x}{2}<y<2x<\infty .
\end{eqnarray*}
We conclude that
$$
\|H^\lambda (\cdot ;x,y)\|_H\leq \frac{C}{x},\quad 0<\frac{x}{2}<y<2x<\infty ,
$$
and hence, for each $x\in (0,\infty )$,
$$
\|H^\lambda (f)(\cdot ,x)\|_{\gamma (H,\mathbb{B})}\leq C\int_{x/2}^{2x}\|H^\lambda (\cdot ;x,y)\|_H\|f(y)\|_\mathbb{B}dy
\leq \frac{C}{x}\int_0^{2x}\|f(y)\|_\mathbb{B}dy\leq C\|f\|_{BMO_{\rm o}((0,\infty ),\mathbb{B})}.
$$
By putting together the above estimates we obtain that
\begin{equation}\label{H41}
\|\mathcal{G}_\mathbb{B}(f_{\rm e})(\cdot ,x)-\mathcal{G}_\mathbb{B}^\lambda (f)(\cdot ,x)\|_{\gamma (H,\mathbb{B})}\leq C\|f\|_{BMO_{\rm o}((0,\infty ),\mathbb{B})},\quad x\in (0,\infty  ).
\end{equation}
Since $\mathcal{G}_\mathbb{B}(f_1)(\cdot ,x)\in \gamma (H,\mathbb{B})$, a.e. $x\in (0,\infty )$, $\mathcal{G}_\mathbb{B}^\lambda (f)(\cdot ,x)\in \gamma (H,\mathbb{B})$, a.e. $x\in (0,\infty )$, and
$\mathcal{G}_\mathbb{B}(f_3)(t ,x)=0$, $t,x\in (0,\infty )$, we deduce that $\mathcal{G}_\mathbb{B}(f_2)(\cdot ,x)\in \gamma (H,\mathbb{B})$, a.e. $x\in (0,\infty )$.

On the other hand,
$$
\left\|\partial _x\left[t\partial _xP_t(x-y)\right]\right\|_H\leq C\left(\int_0^\infty \frac{t^3}{(t+|x-y|)^8}dt\right)^{1/2}\leq \frac{C}{|x-y|^2},\quad x,y\in (0,\infty ),\;x\not=y.
$$
We have established all the properties needed in order to show, by using the arguments developed in Section 2.2, that
\begin{equation}\label{G2}
\frac{1}{|I|}\int_I\|\mathcal{G}_\mathbb{B}(f_2)(\cdot ,x)-\mathcal{G}_\mathbb{B}(f_2)(\cdot ,x_0)\|_\mathbb{B}dx\leq C\|f\|_{BMO_{\rm o}((0,\infty ),\mathbb{B})},
\end{equation}
where $x_0\in I$ is chosen such that $\mathcal{G}_\mathbb{B}(f_2)(\cdot ,x_0)\in \gamma (H,\mathbb{B})$ and the constant $C>0$ does not depend on $I$.

From (\ref{G1}), \eqref{H41} and \eqref{G2} we deduce that there exists $C>0$ such that, for every interval $I\subset (0,\infty )$, we can find $\alpha _I\in \gamma (H,\mathbb{B})$ such that
\begin{equation}\label{H42}
\frac{1}{|I|}\int_I\|\mathcal{G}_\mathbb{B}^\lambda (f)(\cdot ,x)-\alpha _I\|_{\gamma (H,\mathbb{B})}dx\leq C\|f\|_{BMO_{\rm o}((0,\infty ),\mathbb{B})}.
\end{equation}
\end{proof}

%\newpage
%%%%%%%%%%%%%%%%%%%%%%%%%%%%%%%%%%%%%%%%%%%%%%%%%%%%%%%%%%%%%%%%%%%%%%%%%%%%%%%%%%%%%%%%%%%%%%%%%%%%%%%%%%%%%%%%%%%%
\section{Proof of Theorem~\ref{Th:3}}\label{sec:Proof3}
%%%%%%%%%%%%%%%%%%%%%%%%%%%%%%%%%%%%%%%%%%%%%%%%%%%%%%%%%%%%%%%%%%%%%%%%%%%%%%%%%%%%%%%%%%%%%%%%%%%%%%%%%%%%%%%%%%%%

Properties $(i)\Longrightarrow (ii)$ and $(i)\Longrightarrow (iii)$ have been established in Theorem \ref{Th:1} and Theorem \ref{Th:2}.

\subsection {} We now prove that $(ii) \Longrightarrow (i)$. In order to do this we use Riesz transfoms in the Bessel setting.
The Riesz transform $R_\lambda$ is defined by
$$
R_\lambda (f)(x)=\lim_{\varepsilon \rightarrow 0^+}\int_{0,|x-y|>\varepsilon}^\infty R_\lambda (x,y)f(y)dy,\mbox{ a.e. }x\in (0,\infty ),
$$
for every $f\in L^p(0,\infty )$, $1\leq p<\infty$. Here the kernel $R_\lambda$ is given by
$$
R_\lambda (x,y)=\int_0^\infty D_{\lambda ,x}P_t^\lambda (x,y)dt,\quad x,y\in (0,\infty ),\;x\not =y.
$$
$R_\lambda $ is a Calder\'on-Zygmund operator that is bounded from $L^p(0,\infty )$ into itself and from $L^1(0,\infty )$ into $L^{1,\infty }(0,\infty )$ (\cite[Theorem 4.2]{BBFMT}).

We define the Riesz transform on $L^p(0,\infty )\otimes \mathbb{B}$, $1\leq p<\infty$, in the natural way, and the new operator is also denoted by $R_\lambda$. In \cite[Theorem 2.1]{BFMT} it was proved that the Banach space $\mathbb{B}$ is UMD if, and only if, $R_\lambda$ can be extended from $L^p(0,\infty )\otimes \mathbb{B}$ to $L^p((0,\infty ),\mathbb{B})$ as a bounded operator from $L^p((0,\infty ),\mathbb{B})$ into itself, for some (equivalently, for any) $1<p<\infty$.

We consider the operator $R_\lambda ^*$ defined by
$$
R_\lambda ^*(f)(x)=\lim_{\varepsilon \rightarrow 0^+}\int_{0,|x-y|>\varepsilon}^\infty R_\lambda (y,x)f(y)dy,\mbox{ a.e. }x\in (0,\infty ),
$$
for every $f\in L^p(0,\infty )$, $1\leq p<\infty$. This operator has the same $L^p$-boundedness properties of $R_\lambda$. Also, the Banach space $\mathbb{B}$ is UMD if and only if $R_\lambda ^*$ can be extended from $L^p(0,\infty )\otimes \mathbb{B}$ to $L^p((0,\infty ),\mathbb{B})$ as a bounded operator from $L^p((0,\infty ),\mathbb{B})$ into itself, for some (equivalently, for any) $1<p<\infty$.

In \cite[(16.6)]{MS} Cauchy-Riemann type equations in the Bessel setting were given. Motivated by these equations and using Hankel transform we can see that
$$
\partial _tP_t^\lambda (R_\lambda ^*(f))=D_\lambda ^*P_t^{\lambda +1}(f),\quad f\in S_\lambda (0,\infty ).
$$
In other words, we have that
$$
G_\mathbb{C}^{\lambda ,1}(R_\lambda ^*(f))=\mathcal{G}_\mathbb{C}^\lambda (f),\quad f\in S_\lambda (0,\infty ).
$$
Since $S_ \lambda (0,\infty )$ is a dense subspace of $L^p(0,\infty )$, $1<p<\infty$, by using \cite[Theorems 1.2 and 1.3]{BCR3} we obtain
\begin{equation}\label{H42A}
G_\mathbb{C}^{\lambda ,1}(R_\lambda ^*(f))=\mathcal{G}_\mathbb{C}^\lambda (f),\quad f\in L^p(0,\infty ),\;1<p<\infty .
\end{equation}
Now, we need to know the behaviour of  $R_\lambda ^*$ on $H^1_{\rm o}(\mathbb{R})$ and on $BMO_{\rm o}(\mathbb{R})$.

\begin{Prop}\label{Riesz}
Let $\lambda >0$. The Riesz transform $R_\lambda ^*$ is a bounded operator from $E(0,\infty )$ into itself, where $E$ denotes $H^1_{\rm o}$ or $BMO_{\rm o}$.
\end{Prop}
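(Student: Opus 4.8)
The plan is to leverage that $R_\lambda^*$ is already known to be a Calder\'on--Zygmund operator on $(0,\infty)$ (\cite[Theorem 4.2]{BBFMT}), hence bounded on $L^q(0,\infty)$ for $1<q<\infty$ and from $L^1(0,\infty)$ into $L^{1,\infty}(0,\infty)$, with a kernel $R_\lambda(y,x)=\int_0^\infty D_{\lambda,y}P_t^\lambda(y,x)\,dt$ satisfying the standard size and smoothness estimates and, in addition, the off-diagonal decay $|R_\lambda(y,x)|\le C\,y^\lambda x^{-\lambda-1}$ for $0<y<x/2$ together with the symmetric bound for $0<x<y/2$; these last bounds follow by integrating in $t$ the pointwise Poisson-kernel estimates already recorded in \eqref{H2} and \eqref{I1I2}. (When $\lambda\ge 1$ there is a shortcut: extending the identity \eqref{H42A}, $G_\mathbb{C}^{\lambda,1}(R_\lambda^*(f))=\mathcal{G}_\mathbb{C}^\lambda(f)$, to a dense subspace of $E(0,\infty)$ and combining the lower bound in Theorem~\ref{Th:1} (with $\B=\mathbb{C}$ and $\beta=1$) with the boundedness of $\mathcal{G}_\mathbb{C}^\lambda$ from Theorem~\ref{Th:2} gives $\|R_\lambda^*(f)\|_{E(0,\infty)}\le C\|G_\mathbb{C}^{\lambda,1}(R_\lambda^*(f))\|_{E((0,\infty),\gamma(H))}=C\|\mathcal{G}_\mathbb{C}^\lambda(f)\|_{E((0,\infty),\gamma(H))}\le C\|f\|_{E(0,\infty)}$; to cover all $\lambda>0$ I argue directly.)

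For $E=H^1_{\rm o}$ I would reproduce the scheme of Section~\ref{subsec:2.1}: by Proposition~\ref{Prop1} (with $X=\mathbb{C}$) it suffices to show that $R_\lambda^*$ maps $H^1_{\rm o}(0,\infty)$ into $L^1(0,\infty)$ and that $P_*^\lambda\circ R_\lambda^*$ maps $H^1_{\rm o}(0,\infty)$ into $L^1(0,\infty)$. Writing $f=\sum_j\lambda_j a_j$ via Proposition~\ref{Prop1}, an atom $a$ of type $(Aii)$ is handled by the classical Calder\'on--Zygmund theory of $H^1((0,\infty))$ together with the composition argument used in the proof of Lemma~\ref{Lemap14} (now applied to $R_\lambda^*$ in place of $G_\B^{\lambda,\beta}$), which yields $\|P_*^\lambda(R_\lambda^*(a))\|_{L^1(0,\infty)}\le C$ uniformly. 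For an atom $a=\pm\delta^{-1}\chi_{(0,\delta)}$ of type $(Ai)$ one splits $(0,\infty)=(0,2\delta)\cup(2\delta,\infty)$: on $(0,2\delta)$ one combines Cauchy--Schwarz with the $L^2$-boundedness of $P_*^\lambda\circ R_\lambda^*$, and on $(2\delta,\infty)$ the off-diagonal decay of $R_\lambda$ followed by the uniform estimate $\sup_{s>0}\|P_s^\lambda(x,\cdot)\|_{L^1}\le C$ gives $\sup_{s>0}|P_s^\lambda(R_\lambda^*(a))(x)|\le C\,\delta^\lambda x^{-\lambda-1}$ for $x\ge 2\delta$, which is integrable there. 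Summing over the atoms yields $\|P_*^\lambda(R_\lambda^*(f))\|_{L^1(0,\infty)}\le C\|f\|_{H^1_{\rm o}(0,\infty)}$, and hence $R_\lambda^*(f)\in H^1_{\rm o}(0,\infty)$ with the required norm control.

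For $E=BMO_{\rm o}$ I would proceed as in Section~\ref{subsec:2.2}, verifying the two defining conditions: condition $(Bi)$ for $R_\lambda^*(f)$ follows exactly as in Lemma~\ref{Lemap17}, splitting the kernel over $(0,2r)$ (where one uses the $L^2$-boundedness of $R_\lambda^*$) and $(2r,\infty)$ (where one uses the off-diagonal decay of $R_\lambda$ together with the $BMO_{\rm o}$-bound, as in \eqref{H19A}); and condition $(Bii)$, i.e.\ $R_\lambda^*(f)\in BMO((0,\infty))$, is obtained as in Lemma~\ref{Lemap19} by comparing $R_\lambda^*$ with the (local) Hilbert transform on the diagonal region $x/2<y<2x$, reducing to the classical boundedness of $\mathcal{H}$ on $BMO$, and estimating the contributions of $0<y<x/2$ and $y>2x$ by the kernel decay. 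Alternatively, once one has shown (by the same argument) that $R_\lambda$ is bounded on $H^1_{\rm o}(0,\infty)$, the $BMO_{\rm o}$ statement for $R_\lambda^*$ follows by duality, since $BMO_{\rm o}(0,\infty)$ is the dual of $H^1_{\rm o}(0,\infty)$ and $R_\lambda^*$ is the transpose of $R_\lambda$.

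The step I expect to be the main obstacle is the treatment of the atoms of type $(Ai)$, which carry no cancellation: to conclude $R_\lambda^*(a)\in H^1_{\rm o}(0,\infty)$ (rather than merely weak-$L^1$ control) one must exploit the extra decay of the Bessel Riesz kernel near the origin, which encodes that the odd extension of $\delta^{-1}\chi_{(0,\delta)}$ is a genuine $H^1(\R)$-atom; checking that the pointwise and maximal estimates above really force membership in the parity-localized space, uniformly over the atom, together with the corresponding localization and tail bookkeeping in the $BMO_{\rm o}$ part, is the delicate point.
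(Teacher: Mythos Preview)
Your overall plan---atomic decomposition plus $P_*^\lambda\circ R_\lambda^*$ for $H^1_{\rm o}$, and verifying $(Bi)$, $(Bii)$ by localization/comparison with the Hilbert transform for $BMO_{\rm o}$---is exactly the route the paper takes. Two points, however, need correction.

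\textbf{The shortcut is circular.} The chain
\[
\|R_\lambda^*(f)\|_{E}\le C\|G_\C^{\lambda,1}(R_\lambda^*(f))\|_{E(\gamma(H))}=C\|\mathcal{G}_\C^\lambda(f)\|_{E(\gamma(H))}\le C\|f\|_{E}
\]
requires $R_\lambda^*(f)\in E$ \emph{a priori} before the lower bound of Theorem~\ref{Th:1} can be invoked, which is precisely what is to be proved. Density does not help for $BMO_{\rm o}$, and for $H^1_{\rm o}$ there is no obvious dense class on which membership of $R_\lambda^*(f)$ in $H^1_{\rm o}$ is already known.

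\textbf{The maximal estimate on $(Ai)$-atoms is not justified.} The claim that ``the off-diagonal decay of $R_\lambda$ followed by $\sup_{s>0}\|P_s^\lambda(x,\cdot)\|_{L^1}\le C$'' yields $\sup_{s>0}|P_s^\lambda(R_\lambda^*(a))(x)|\le C\delta^\lambda x^{-\lambda-1}$ for $x\ge 2\delta$ does not work: the bound $\sup_s\|P_s^\lambda(x,\cdot)\|_{L^1}\le C$ only controls $P_s^\lambda(g)$ by $\|g\|_{L^\infty}$, and $g=R_\lambda^*(a)$ is neither bounded nor supported in $(0,\delta)$. What the paper does instead is use the commutation identity $P_s^\lambda R_\lambda^*=R_\lambda^* P_s^{\lambda+1}$ (obtained via Hankel transforms, \eqref{H44}) to write the composition as an integral operator with kernel
\[
\mathcal{H}^\lambda(s;x,y)=-\int_0^\infty D_{\lambda,x}^*P_{t+s}^{\lambda+1}(x,y)\,dt,
\]
and then proves directly, from the Poisson-kernel estimates \eqref{H30A}, \eqref{I1b}, \eqref{I2b}, that $\|\mathcal{H}^\lambda(\cdot;x,y)\|_{L^\infty(0,\infty)}\le C\,y^{\lambda+1}x^{-\lambda-2}$ for $0<y<x/2$ (see \eqref{H49}). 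With this uniform-in-$s$ bound on the \emph{composed} kernel, the $(Ai)$-atom is handled exactly as you indicate on $(0,2\delta)$ (Cauchy--Schwarz and $L^2$-boundedness of $P_*^\lambda\circ R_\lambda^*$) and on $(2\delta,\infty)$ by integrating the kernel bound against $a$. The same composed-kernel device is what makes the Calder\'on--Zygmund argument for $(Aii)$-atoms go through; merely citing Lemma~\ref{Lemap14} is not enough, since there the composed kernel was $K_\lambda^\beta(t,s;x,y)$, whereas here one must first identify $\mathcal{H}^\lambda$ and verify \eqref{H45}--\eqref{H47}.

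For $BMO_{\rm o}$ your duality alternative (prove $R_\lambda:H^1_{\rm o}\to H^1_{\rm o}$ and transpose) is legitimate and somewhat cleaner than the paper's direct comparison with $\mathfrak{H}(f_{\rm e})$; just be aware that one must check the transpose agrees with the localized definition of $R_\lambda^*$ on $BMO_{\rm o}$, and that the $H^1_{\rm o}$-boundedness of $R_\lambda$ requires the same composed-kernel analysis as above (with $P_s^\lambda R_\lambda=R_\lambda P_s^{\lambda+1}$ in place of \eqref{H44}).
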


\begin{proof}[Proof of Proposition \ref{Riesz}: the case of $E=H^1_{\rm o}$.]

Since $R_\lambda ^*$ is a Calder\'on-Zygmund operator, $R_\lambda ^*$ is bounded from $H^1(0,\infty )$ into $L^1(0,\infty )$, and there exists $C>0$ such that, for every $\infty$-atom $a$ satisfying the $(Aii)$ property,
$$
\|R_\lambda ^*(a)\|_{L^1(0,\infty )}\leq C.
$$
According to \cite[(3.16)]{BBFMT},
\begin{equation}\label{H43}
|R_\lambda (y,x)|\leq Cy^{\lambda +1}x^{-\lambda -2},\quad 0<y<\frac{x}{2}<\infty .
\end{equation}
Let $\delta >0$. If $a=\frac{1}{\delta }\chi _{(0,\delta)}$, being $R_\lambda ^*$ bounded from $L^2(0,\infty )$ into itself, by (\ref{H43}) we have that
\begin{eqnarray*}
\|R_\lambda ^*(a)\|_{L^1(0,\infty )}&=&\int_0^{2\delta }|R_\lambda ^*(a)(x)|dx+\int_{2\delta }^\infty \int_0^\delta |R_\lambda (y,x)|\frac{dy}{\delta }dx\\
&\leq &\left(\delta \int_0^\infty |R_\lambda ^*(a)(x)|^2dx\right)^{1/2}+\frac{C}{\delta }\int_{2\delta }^\infty \frac{1}{x^{\lambda +2}}\int_0^\delta y^{\lambda +1}dydx\\
&\leq&C\left(1+\left(\delta \int_0^\delta |a(x)|^2dx\right)^{1/2}\right)\leq C.
\end{eqnarray*}
Here $C>0$ does not depend on $\delta$.

Moreover, since $R_\lambda ^*$ is a bounded operator from $L^1(0,\infty )$ into $L^{1,\infty }(0,\infty )$ we obtain that $R_\lambda ^*$ is bounded from $H^1_{\rm o}(0,\infty )$ into $L^1(0,\infty )$.

On the other hand, it can be seen that $R_\lambda (f)=h_{\lambda +1}(h_\lambda(f)) $, $f\in L^2(0,\infty )$ (\cite[\S 16]{MS}). Then,
$$
R_\lambda ^*(f)=h_\lambda (h_{\lambda +1}(f)),\quad f\in L^2(0,\infty ).
$$
Then, by using Hankel transforms we get, for every $f\in L^2(0,\infty )$,
\begin{equation}\label{H44}
P_s^\lambda (R_\lambda ^*(f))=h_\lambda (e^{-sy}h_{\lambda +1}(f))=h_\lambda h_{\lambda +1}(P_s^{\lambda +1}(f))=R_\lambda ^*(P_s^{\lambda +1}(f)),\quad s\in (0,\infty ).
\end{equation}
We consider the operators
$$
\mathcal{H}_*^\lambda (f)(x)=\sup_{s>0}|P_s^\lambda (R_\lambda ^*(f))(x)|,\quad x\in (0,\infty ),
$$
and, for every $N\in \mathbb{N}$,
$$
\mathcal{H}_{N,*}^\lambda (f)(x)=\sup_{s\in [1/N,N]}|P_s^\lambda (R_\lambda ^*(f))(x)|,\quad x\in (0,\infty ),
$$
where $f\in L^p(0,\infty )$, $1<p<\infty$. $\mathcal{H}_*^\lambda $ and $\mathcal{H}_{N,*}^\lambda $, $N\in \mathbb{N}$, are bounded operators from $L^p(0,\infty )$ into itself, for each $1<p<\infty$.

Since $D_{\lambda ,y}P_t^\lambda (x,y)=-D_{\lambda ,x}^*P_t^{\lambda +1}(x,y)$, $t,x,y\in (0,\infty )$, (\ref{H44}) leads to
$$
P_s^\lambda (R_\lambda ^*(f))(x)=\int_0^\infty \mathcal{H}^\lambda (s;x,y)f(y)dy,\quad \mbox{ a.e. }x\not \in \mbox{supp }f,
$$
for every $f\in L^2(0,\infty )$, where
$$
\mathcal{H}^\lambda (s;x,y)=-\int_0^\infty D_{\lambda ,x}^*P_{t+s}^{\lambda +1}(x,y)dt,\quad s,x,y\in (0,\infty ).
$$
By (\ref{H30A}) and (\ref{Ical}) we have that
\begin{eqnarray}\label{64}
\int_0^\infty |D_{\lambda ,x}^*P_{t+s}^{\lambda +1}(x,y)|dt&\leq&C\int_0^\infty \frac{t+s}{(t+s+|x-y|)^3}dt\leq C\int_0^\infty \frac{dt}{(t+s+|x-y|)^2}\nonumber\\
&\leq &\frac{C}{s+|x-y|},\quad s,x,y\in (0,\infty ).
\end{eqnarray}
Then
\begin{equation}\label{H45}
\|\mathcal{H}^\lambda (\cdot ;x,y)\|_{L^\infty (0,\infty )}\leq \frac{C}{|x-y|},\quad x,y\in (0,\infty ),\;x\not=y.
\end{equation}
On the other hand, from (\ref{H32A}), (\ref{26.1}) and (\ref{J1}) it follows that
$$
\int_0^\infty |\partial _xD_{\lambda ,x}^*P_{t+s}^{\lambda +1}(x,y)|\leq C\int_0^\infty \frac{t+s}{(t+s+|x-y|)^4}dt\leq \frac{C}{(s+|x-y|)^2},\quad s,x,y\in (0,\infty ).
$$
Hence,
\begin{equation}\label{H46}
\left\|\partial _x\mathcal{H}^\lambda (\cdot ;x,y)\right\|_{L^\infty (0,\infty )}\leq\frac{C}{|x-y|^2},\quad x,y\in (0,\infty ),\,x\not=y.
\end{equation}
The same arguments lead to
\begin{equation}\label{H47}
\left\|\partial _y\mathcal{H}^\lambda (\cdot ;x,y)\right\|_{L^\infty (0,\infty )}\leq\frac{C}{|x-y|^2},\quad x,y\in (0,\infty ),\,x\not=y.
\end{equation}
Let $N\in \mathbb{N}$. We consider the operator
$$
\mathcal{H}_N^\lambda (f)(s,x)=P_s^\lambda (R_\lambda ^*(f))(x),\quad x\in (0,\infty )\mbox{ and }s\in \Big[\frac{1}{N},N\Big].
$$
For every $f\in S_\lambda (0,\infty )$, we have that
\begin{equation}\label{H48}
\mathcal{H}_N^\lambda (f)(\cdot, x)=\int _0^\infty \mathcal{H}_\lambda (\cdot ;x,y)f(y)dy,\quad \mbox{ a.e. }x\not \in \mbox{supp }f,
\end{equation}
where the integral is understood in the $C[1/N,N]$-Bochner sense, the space of continuous functions in $[1/N,N]$. Indeed, let $f\in S_\lambda (0,\infty )$.
According to (\ref{H45}), the integral in (\ref{H48}) is convergent in the $C[1/N,N]$-Bochner sense, for almost all $x\not\in \mbox{supp }f$.
Assume that $\mu \in \mathcal{M}([1/N,N])$, the space of complex measures supported on $[1/N,N]$. Since $\mathcal{M}([1/N,N])=(C[1/N,N])^*$, we can write
\begin{eqnarray*}
\int_0^\infty  \left(\int_0^\infty \mathcal{H}^\lambda (\cdot ;x,y)f(y)dy\right)d\mu (s)&=&\int_0^\infty \int_0^\infty \mathcal{H}^\lambda (s;x,y)d\mu (s)f(y)dy\\
&=&\int_0^\infty  \left(\int_0^\infty \mathcal{H}^\lambda (s;x,y)f(y)dy\right)d\mu (s),\quad x\not \in \mbox{supp }f.
\end{eqnarray*}
Note that the interchange of the order of integration is justified by (\ref{H45}).

According to the vector-valued Calder\'on-Zygmund theory, since $\mathcal{H}_*^\lambda $ is bounded from $L^2(0,\infty )$ into itself, estimations (\ref{H45}), (\ref{H46}) and \eqref{H47} imply that the operator $\mathcal{H}_N^\lambda $ can be extended to $L^1(0,\infty )$ as a bounded ope\-rator from $L^1(0,\infty )$ into $L^{1,\infty }((0,\infty ),C[1/N,N])$ and from $H^1(0,\infty )$ into $L^1((0,\infty ),C[1/N,N])$. Moreover, if we denote by $\widetilde{\mathcal{H}}_N^\lambda $ to this extension we have that
$$
\sup_{N\in \mathbb{N}}\|\widetilde{\mathcal{H}}_N^\lambda \|_{L^1(0,\infty )\rightarrow L^{1,\infty }((0,\infty ),C[1/N,N])}<\infty ,
$$
and
$$
\sup_{N\in \mathbb{N}}\|\widetilde{\mathcal{H}}_N^\lambda \|_{H^1(0,\infty )\rightarrow L^1((0,\infty ),C[1/N,N])}<\infty .
$$
We now show that $\widetilde{\mathcal{H}}_N^\lambda (f)=\mathcal{H}_N^\lambda (f)$, $f\in L^1(0,\infty )$.  Indeed, let $f\in L^1(0,\infty)$. We choose a sequence $(f_n)_{n\in \N}\subset S_\lambda (0,\infty )$ such that $f_n\longrightarrow f$, as $n\rightarrow \infty $, in $L^1(0,\infty )$. Then
$$
\mathcal{H}_N^\lambda (f_n)\longrightarrow \widetilde{\mathcal{H}}_N^\lambda (f),\mbox{ as }n\rightarrow \infty ,
$$
in $L^{1,\infty }((0,\infty ),C[1/N,N])$. There exists an increasing sequence $(n_k)_{k\in \mathbb{N}}$ such that
$$
\|\mathcal{H}_N^\lambda (f_{n_k})(\cdot ,x)-\widetilde{\mathcal{H}}_N^\lambda (f)(x)\|_{C[1/N,N]}\longrightarrow 0,\mbox{ as }k\rightarrow \infty ,
$$
for almost all $x\in (0,\infty )$. Moreover by (\ref{64}),
$$
\mathcal{H}_N^\lambda (f_{n_k})(s,x)\longrightarrow \mathcal{H}_N^\lambda (f)(s,x),\mbox{ as }k\rightarrow \infty ,\quad s,x\in (0,\infty ).
$$
Hence, $\mathcal{H}_N^\lambda (f)(s,x)=[\widetilde{\mathcal{H}}_N(f)(x)](s)$, $s\in [1/N,N]$ and almost all $x\in (0,\infty )$.

We conclude that $\mathcal{H}_N^\lambda$ is bounded from $L^1(0,\infty )$ into $L^{1,\infty }((0,\infty ),C[1/N,N])$ and from $H^1(0,\infty )$ into $L^1((0,\infty ),C[1/N,N])$. Moreover
$$
\sup_{N\in \mathbb{N}}\|\mathcal{H}_N^\lambda \|_{L^1(0,\infty )\rightarrow L^{1,\infty }((0,\infty ),C[1/N,N])}<\infty ,
$$
and
$$
\sup_{N\in \mathbb{N}}\|\mathcal{H}_N^\lambda \|_{H^1(0,\infty )\rightarrow L^1((0,\infty ),C[1/N,N])}<\infty .
$$

Then, $\mathcal{H}_*^\lambda $ is bounded from $L^1(0,\infty )$ into $L^{1,\infty }(0,\infty )$ and from $H^1(0,\infty )$ into $L^1(0,\infty )$.

In order to see that $\mathcal{H}_ *^\lambda$ is bounded from $H_{\rm o}^1(0,\infty )$ into $L^1(0,\infty )$ it is enough to show that there exists $C>0$ such that, for every $\infty$-atom $a$,
\begin{equation}\label{H48a}
\|\mathcal{H}_*^\lambda (a)\|_{L^1(0,\infty )}\leq C.
\end{equation}
Since  $\mathcal{H}_ *^\lambda$ is bounded from $H^1(0,\infty )$ into $L^1(0,\infty )$, for a certain $C>0$, (\ref{H48a}) holds provided that $a$ is a an $\infty$-atom satisfying $(Aii)$. Also, by using (\ref{H30A}), (\ref{I1b}) and (\ref{I2b}), and by proceeding as in (\ref{64}) we deduce that
\begin{equation}\label{H49}
\|\mathcal{H}^\lambda (\cdot; x,y)\|_{L^\infty (0,\infty )}\leq C(xy)^\lambda \left(\frac{xy}{|x-y|^{2\lambda +3}}+\frac{y}{|x-y|^{2\lambda +2}}\right)\leq C\frac{y^{\lambda +1}}{x^{\lambda +2}},\quad 0<y<\frac{x}{2}<\infty .
\end{equation}
Also $\mathcal{H}_ *^\lambda$ is bounded from $L^2(0,\infty )$ into itself. Then, for every $a=\frac{1}{\delta }\chi _{(0,\delta )}$, with $\delta >0$, by (\ref{H49}) we get
\begin{eqnarray}\label{H50}
\|\mathcal{H}_*^\lambda (a)\|_{L^1(0,\infty )}&\leq&\int_0^{2\delta }|\mathcal{H}_*^\lambda (a)(x)|dx+\int_{2\delta }^\infty \int_0^\delta \|\mathcal{H}^\lambda (\cdot ;x,y)\|_{L^\infty (0,\infty )}|a(y)|dydx\nonumber\\
&\leq&C\left(\delta ^{1/2}\|a\|_{L^2(0,\infty )}+\frac{1}{\delta }\int_{2\delta }^\infty \frac{1}{x^{\lambda +2}}\int_0^\delta y^{\lambda +1}dy\right)\leq C,
\end{eqnarray}
where $C>0$ does not depend on $\delta$.

Hence (\ref{H48a}) holds and we obtain that $\mathcal{H}_ *^\lambda$ is bounded from $H_{\rm o}^1(0,\infty )$ into $L^1(0,\infty )$. We conclude that $R_\lambda ^*$ is bounded from $H_{\rm o}^1(0,\infty )$ into itself.
\end{proof}

\begin{proof}[Proof of Proposition \ref{Riesz}: the case of $E=BMO_{\rm o}(0,\infty )$]
Suppose that $f\in BMO_{\rm o}(0,\infty )$. Let $r>0$. Since $R_\lambda ^*$ is bounded from $L^2(0,\infty )$ into itself and that (see \cite[(3.15)]{BBFMT})
\begin{equation}\label{H51}
|R_\lambda (y,x)|\leq C\frac{x^\lambda }{y^{\lambda +1}}, \quad 0<2x<y<\infty,
\end{equation}
we obtain
\begin{eqnarray*}
\frac{1}{r}\int_0^r|R_\lambda ^*(f)(x)|dx&\leq&\frac{1}{r}\left(\int_0^r|R_\lambda ^*(f\chi _{(0, 2r)})(x)|dx+\int_0^r|R_\lambda ^*(f\chi _{(2r,\infty )})(x)|dx\right)\\
&\leq&C\left[\left(\frac{1}{r}\int_0^r|R_\lambda ^*(f\chi _{(0, 2r)})(x)|^2dx\right)^{1/2}+\frac{1}{r}\int_0^r\int_{2r}^\infty |R_\lambda (y,x)||f(y)|dydx\right]\\
&\leq&C\left[\left(\frac{1}{r}\int_0^{2r}|f(y)|^2dy\right)^{1/2}+r^\lambda \int_{2r}^\infty \frac{|f(y)|}{y^{\lambda +1}}dy\right]\\
&\leq&C\|f\|_{BMO_{\rm o}(0,\infty )},
\end{eqnarray*}
where $C>0$ does not depend on $r$.

The next step is to show that $R_\lambda ^*(f)\in BMO(0,\infty )$.

We consider the even extension $f_{\rm e}$ of the function $f$ to $\R$.
Then, $f_{\rm e}\in BMO (\mathbb{R})$. According to \cite[p. 294]{Tor} the Hilbert transform $\mathfrak{H}(f_{\rm e})$ given by
$$
\mathfrak{H}(f_e)(x)=\lim_{\varepsilon \rightarrow 0^+}\frac{1}{\pi }\int_{|x-y|>\varepsilon }\left(\frac{1}{x-y}-\frac{\chi _{\mathbb{R}\setminus (-1,1)}(y)}{y}\right)f_e(y)dy,\quad \mbox{ a.e. }x\in \mathbb{R},
$$
is in $BMO(\mathbb{R})$.

In \cite[(26) and (27)]{BCFR2} it was seen that
$$
\mathfrak{H}(f_e)(x)-\lim_{\varepsilon \rightarrow 0^+}\frac{1}{\pi }\int_{x/2,|x-y|>\varepsilon }^{2x}\left(\frac{1}{x-y}+\frac{\chi _{(1,\infty )}(y)}{y}\right)f(y)dy \;\;\in\;\; L^\infty (0,\infty ),
$$
and
\begin{equation}\label{H52}
\left\|\mathfrak{H}(f_e)(x)-\lim_{\varepsilon \rightarrow 0^+}\frac{1}{\pi }\int_{x/2,|x-y|>\varepsilon }^{2x}\left(\frac{1}{x-y}+\frac{\chi _{(1,\infty )}(y)}{y}\right)f(y)dy\right\|_{L^\infty (0,\infty )}\leq C\|f\|_{BMO_{\rm o}(0,\infty )}.
\end{equation}
We now show that
\begin{equation}\label{H53}
\left\|R_\lambda ^*(f)(x)+\lim_{\varepsilon \rightarrow 0^+}\frac{1}{\pi }\int_{x/2,|x-y|>\varepsilon }^{2x}\left(\frac{1}{x-y}+\frac{\chi _{(1,\infty )}(y)}{y}\right)f(y)dy
\right\|_{L^\infty (0,\infty )}\leq C\|f\|_{BMO_{\rm o}(0,\infty )}.
\end{equation}
By using \cite[(5)]{BCFR2}, (\ref{H43}) and (\ref{H51}), as in \cite[pp. 319 and 320]{BCFR2} we deduce that
\begin{eqnarray*}
\left|R_\lambda ^*(f)(x)+\lim_{\varepsilon \rightarrow 0^+}\frac{1}{\pi }\int_{x/2,|x-y|>\varepsilon }^{2x}\left[\frac{1}{x-y}+\frac{\chi _{(1,\infty )}(y)}{y}\right)f(y)dy\right|&&\\
&\hspace{-12cm}\leq&\hspace{-6cm}C\left[\int_{x/2}^{2x}\frac{1}{y}\left(1+\log _+\frac{\sqrt{xy}}{|x-y|}\right)|f(y)|dy+\int_{x/2}^{2x}\frac{\chi _{(1,\infty )}(y)}{y}|f(y)|dy\right.\\
&\hspace{-12cm}&\hspace{-6cm}\left.+\frac{1}{x^{\lambda +2}}\int_0^{x/2}y^{\lambda +1}|f(y)|dy+x^\lambda \int_{2x}^\infty \frac{|f(y)|}{y^{\lambda +1}}dy\right]\\
&\hspace{-12cm}\leq&\hspace{-6cm}C\left[\left(\int_{x/2}^ {2x}\frac{1}{y}\left(1+\log _+\frac{\sqrt{xy}}{|x-y|}\right)^2dy\right)^{1/2}\left(\frac{1}{x}\int_0^{2x}|f(y)|^2dy\right)^{1/2}\right.\\
&\hspace{-12cm}&\hspace{-6cm}+\frac{1}{x}\int_0^{2x}|f(y)|dy\left. +x^\lambda \int_{2x}^\infty \frac{|f(y)|}{y^{\lambda +1}}dy\right]\leq C\|f\|_{BMO_{\rm o}(0,\infty )},\quad x\in (0,\infty ),
\end{eqnarray*}
where $\log_+z=\max\{0,\log z\}$, $z\in (0,\infty )$. Thus, (\ref{H53}) is established.

From (\ref{H52}) and (\ref{H53}), since $\mathfrak{H}(f_{\rm e})\in BMO(\mathbb{R})$, we deduce that $R_\lambda ^*(f)\in BMO(0,\infty )$.
\end{proof}

Let $f\in L^\infty _c(0,\infty )\otimes \mathbb{B}$. Since $L^\infty _c(0,\infty )\otimes \mathbb{B}\subset H_{\rm o}^1(0,\infty )\otimes \mathbb{B}$, by Proposition \ref{Riesz}, $R_\lambda ^* (f)\in H_{\rm o}^1(0,\infty )\otimes\mathbb{B}$.

Assume that (ii) holds for $H_{\rm o}^1(\mathbb{R},\mathbb{B})$. By (\ref{H42A}) it follows that
$$
\|R_\lambda ^*(f)\|_{H^1_{\rm o}((0,\infty ),\mathbb{B})}\leq C\|G_\mathbb{B}^{\lambda ,1}(R_\lambda ^*(f))\|_{H^1_{\rm o}((0,\infty ),\gamma (H,\mathbb{B}))}\leq C\|\mathcal{G}_\mathbb{B}^\lambda (f)\|_{H^1_{\rm o}((0,\infty ),\gamma (H,\mathbb{B}))}\leq C\|f\|_{H^1_{\rm o}((0,\infty ),\mathbb{B})}.
$$

Since $H^1((0,\infty ),\B)\subset H^1_{\rm o}((0,\infty ),\B)\subset L^1((0,\infty ),\B)$, according to \cite[Theorem 4.1]{MTX}, $R_\lambda ^*$ can be extended to $L^p((0,\infty),\mathbb{B})$ as a bounded operator from $L^p((0,\infty),\mathbb{B})$  into itself. Then, we conclude that $\mathbb{B}$ is UMD.

When it is assumed that $(ii)$ holds for $BMO_{\rm o}((0,\infty ),\mathbb{B})$, by proceeding in a similar way, we can prove that $\mathbb{B}$ is UMD.

Thus, the proof of $(ii)\Longrightarrow (i)$ is complete.

\subsection{} We prove in this section that $(iii)\Longrightarrow (i)$. In order to see this, we use that the Banach space $\mathbb{B}$ is UMD if, and only if, for every $\gamma \in \mathbb{R}\setminus\{0\}$, the imaginary power $\Delta _\lambda ^{i\gamma }$ can be extended to $L^p((0,\infty),\mathbb{B})$ as a bounded operator from $L^p((0,\infty),\mathbb{B})$  into itself, for some (equivalently, for every) $1<p<\infty$ (see \cite[Proposition 5.1]{BCR3}).

Let $\gamma \in \mathbb{R}\setminus\{0\}$. We recall that the imaginary power $\Delta _\lambda ^{i\gamma }$ of $\Delta _\lambda $ is defined by
$$
\Delta _\lambda ^{i\gamma }(f)=h_\lambda (y^{2i\gamma }h_\lambda (f)),\quad f\in L^2(0,\infty ).
$$
Since $h_\lambda $ is an isometry in $L^2(0,\infty)$, $\Delta _\lambda ^{i\gamma}$ is bounded from $L^2(0,\infty )$ into itself. Also, $\Delta _\lambda ^{i\gamma}$ is a Laplace transform type multiplier for the Bessel operator $\Delta _\lambda $, because
$$
y^{2i\gamma }=y^2\int_0^\infty e^{-y^2u}\frac{u^{-i\gamma}}{\Gamma (1-i\gamma )}du,\quad y\in (0,\infty ).
$$
Laplace transform type Hankel multipliers have been studied in \cite{BCC1} and \cite{BMR}. According to the results established in \cite{BCC1} and \cite{BMR} the imaginary power $\Delta_\lambda ^{i\gamma }$ is a Calder\'on-Zygmund operator defined by the kernel
\begin{equation}\label{40.1}
K_\lambda ^\gamma (x,y)=-\frac{1}{\Gamma (1-i\gamma )}\int_0^\infty t^{-i\gamma }\partial _tW_t^\lambda (x,y)dt,\quad x,y\in (0,\infty ),\;x\not =y,
\end{equation}
where $W_t^\lambda (x,y)$ is the heat kernel for the Bessel operator $\Delta _\lambda$, and it is given by
$$
W_t^\lambda (x,y)=\frac{(xy)^{1/2}}{2t}I_{\lambda -1/2}\left(\frac{xy}{2t}\right)e^{-(x^2+y^2)/(4t)},\quad t,x,y\in (0,\infty ).
$$
Here $I_\nu $ represents the modified Bessel function of the first class and order $\nu$.

Moreover, the operator $\Delta _\lambda ^{i\gamma }$ can be extended to $L^p(0,\infty )$ as a bounded operator from $L^p(0,\infty )$ into itself, for every $1<p<\infty$, from $L^1(0,\infty )$ into $L^{1,\infty }(0,\infty )$, and from $H^1(0,\infty )$ into $L^1(0,\infty )$. This extension, that we continue denoting by $\Delta _\lambda ^{i\gamma }$, has the following integral representation, for every $f\in L^p(0,\infty )$, $1\leq p<\infty$,
\begin{equation}\label{H53A}
\Delta _\lambda ^{i\gamma }(f)(x)=\lim_{\varepsilon \rightarrow 0^+}\left(\alpha (\varepsilon  )f(x)+\int_{0,|x-y|>\varepsilon }^\infty K_\lambda ^\gamma (x,y) f(y)dy\right),\quad \mbox{ a.e. }x\in (0,\infty ),
\end{equation}
where $\alpha $ is a measurable bounded function.

The operator $\Delta _\lambda ^{i\gamma}$ is defined on $L^p(0,\infty )\otimes \mathbb{B}$ in the natural way.

Let $\gamma \in \mathbb{R}\setminus\{0\}$ and $\beta >0$. We define the operator $T_{\gamma ,\beta}$ by
$$
T_{\gamma ,\beta}(h)(t)=\frac{1}{t^\beta}\int_0^t(t-s)^{\beta -1}h(t-s)\psi _\gamma (s)ds,\quad t>0,
$$
for every $h\in H$, where $\psi _\gamma (s)=s^{-2\gamma i}/\Gamma (1-2\gamma i)$, $s\in(0,\infty )$. $T_{\gamma ,\beta}$ is a bounded operator from $H$ into itself.

If $f\in S_\lambda (0,\infty )$ we proved in \cite[(55)]{BCR3} that
\begin{equation}\label{H54}
G_\mathbb{C}^{\lambda ,\beta}(\Delta _\lambda ^{i\gamma }(f))(\cdot ,x)=-G_\mathbb{C}^{\lambda ,\beta +1}(f)(\cdot ,x)\circ T_{\gamma ,\beta},\quad \mbox{ a.e. }x\in (0,\infty ),
\end{equation}
as elements of $\gamma (H,\mathbb{C})=H$. Moreover, the two sides of (\ref{H54}) define bounded operators from $L^p(0,\infty )$ into $L^p((0,\infty ),H)$ (see Theorem \ref{Th:1} and \cite[Theorem 6.2]{Nee}), for every $1<p<\infty$. Since $S_\lambda (0,\infty )$ is dense in $L^p(0,\infty )$, we deduce that (\ref{H54}) holds, for every $f\in L^p(0,\infty )$, $1<p<\infty$.

In the following proposition we establish the behaviour of $\Delta _\lambda ^{i\gamma}$ on $H^1_{\rm o}(0,\infty )$ and $BMO_{\rm o}(0,\infty )$.

\begin{Prop}\label{imaginarypower}
Let $\gamma \in  \mathbb{R}\setminus\{0\}$ and $\lambda >0$. The operator $\Delta _\lambda ^{i\gamma }$ is bounded from $E(0,\infty )$ into itself, where $E$ denotes $H^1_{\rm o}$ or $BMO_{\rm o}$.
\end{Prop}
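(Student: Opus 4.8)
The plan is to argue exactly as in the proof of Proposition~\ref{Riesz}, with the Riesz kernel replaced by the Calder\'on--Zygmund kernel $K_\lambda^\gamma$ of $\Delta_\lambda^{i\gamma}$ given in \eqref{40.1}. The only additional analytic ingredient is a set of pointwise bounds for $K_\lambda^\gamma$, all of which can be read off from \eqref{40.1} by means of the well-known asymptotics of the modified Bessel function $I_{\lambda-1/2}$ entering $W_t^\lambda$ (in the spirit of \cite{BCC1} and \cite{BMR}): besides the standard estimates $|K_\lambda^\gamma(x,y)|\le C|x-y|^{-1}$ and $|\partial_xK_\lambda^\gamma(x,y)|+|\partial_yK_\lambda^\gamma(x,y)|\le C|x-y|^{-2}$ for $x\neq y$, one has the off-diagonal gains
$$|K_\lambda^\gamma(x,y)|\le C\frac{y^\lambda}{x^{\lambda+1}},\quad 0<y<\frac{x}{2},\qquad\text{and}\qquad |K_\lambda^\gamma(x,y)|\le C\frac{x^\lambda}{y^{\lambda+1}},\quad 0<2x<y,$$
together with the near-diagonal expansion $\bigl|K_\lambda^\gamma(x,y)-c_\gamma|x-y|^{-1-2i\gamma}\bigr|\le C/x$ for $x/2<y<2x$, where $c_\gamma|z|^{-1-2i\gamma}$ denotes the Calder\'on--Zygmund kernel of the classical imaginary power $\mathfrak{D}^{i\gamma}:=(-d^2/dx^2)^{i\gamma}$ on $\R$. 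Establishing these bounds --- in particular the near-diagonal one, which amounts to comparing $\int_0^\infty t^{-i\gamma}\partial_t W_t^\lambda(x,y)\,dt$ with the analogous Euclidean integral --- is the main technical step, and it is of the same nature as the kernel computations carried out in \cite[Section 3.1]{BCR3} and \cite[\S 16]{MS}.

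\emph{The case $E=H^1_{\rm o}$.} Recall from \eqref{H53A} that $\Delta_\lambda^{i\gamma}$ already maps $H^1(0,\infty)$ boundedly into $L^1(0,\infty)$ and $L^1(0,\infty)$ into $L^{1,\infty}(0,\infty)$. By the atomic description in Proposition~\ref{Prop1} it is then enough to bound $\|\Delta_\lambda^{i\gamma}(a)\|_{L^1(0,\infty)}$ uniformly over atoms $a=\delta^{-1}\chi_{(0,\delta)}$ of type $(Ai)$ and, afterwards, to prove that $P_*^\lambda\circ\Delta_\lambda^{i\gamma}$ is bounded from $H^1_{\rm o}(0,\infty)$ into $L^1(0,\infty)$. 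For the first point we split $\int_0^\infty=\int_0^{2\delta}+\int_{2\delta}^\infty$: on $(0,2\delta)$ we use Cauchy--Schwarz and the $L^2$-boundedness of $\Delta_\lambda^{i\gamma}$, and on $(2\delta,\infty)$ --- where the principal value is not needed and the regularization term vanishes --- we use $|K_\lambda^\gamma(x,y)|\le C\,y^\lambda x^{-\lambda-1}$ to obtain $|\Delta_\lambda^{i\gamma}(a)(x)|\le C\,\delta^\lambda x^{-\lambda-1}$, whose integral over $(2\delta,\infty)$ is bounded independently of $\delta$. For the second point, since $P_s^\lambda$ and $\Delta_\lambda^{i\gamma}$ are both Hankel multipliers they commute, so $P_s^\lambda(\Delta_\lambda^{i\gamma}f)(x)=\int_0^\infty\mathcal{K}^\gamma(s;x,y)f(y)\,dy$ with $\mathcal{K}^\gamma(s;x,y)=-\frac{1}{\Gamma(1-i\gamma)}\int_0^\infty t^{-i\gamma}\partial_t\bigl((P_s^\lambda W_t^\lambda)(x,y)\bigr)\,dt$; one checks $\|\mathcal{K}^\gamma(\cdot;x,y)\|_{L^\infty(0,\infty)}\le C|x-y|^{-1}$ together with the same off-diagonal gains and the corresponding derivative estimates in $x$ and $y$, and then runs the $C[1/N,N]$-valued Calder\'on--Zygmund argument exactly as in the proofs of Lemma~\ref{Lemap14} and of Proposition~\ref{Riesz}, treating the $(Ai)$-atoms by hand as above. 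This yields $P_*^\lambda\circ\Delta_\lambda^{i\gamma}:H^1_{\rm o}(0,\infty)\to L^1(0,\infty)$, and hence $\Delta_\lambda^{i\gamma}$ is bounded on $H^1_{\rm o}(0,\infty)$.

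\emph{The case $E=BMO_{\rm o}$.} We verify conditions $(Bi)$ and $(Bii)$. For $(Bi)$, given $r>0$ we split $f=f\chi_{(0,2r)}+f\chi_{(2r,\infty)}$: the first term is controlled through the $L^2$-boundedness of $\Delta_\lambda^{i\gamma}$ and the John--Nirenberg inequality, and the second through $|K_\lambda^\gamma(x,y)|\le C\,x^\lambda y^{-\lambda-1}$ for $0<2x<y$, arguing as in \eqref{H19A}. For $(Bii)$, that is, to show $\Delta_\lambda^{i\gamma}(f)\in BMO(0,\infty)$, we reproduce the comparison with the Hilbert transform used for $R_\lambda^*$, now replacing it by $\mathfrak{D}^{i\gamma}$. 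Let $f_{\rm e}$ be the even extension of $f$ to $\R$; since $f\in BMO_{\rm o}(0,\infty)$ gives $f_{\rm e}\in BMO(\R)$ and $\mathfrak{D}^{i\gamma}$ is a Calder\'on--Zygmund operator (understood, as in \eqref{H53A}, via its regularized definition), we have $\mathfrak{D}^{i\gamma}(f_{\rm e})\in BMO(\R)$. Using the near-diagonal expansion of $K_\lambda^\gamma$ and the off-diagonal gains, together with the corresponding Euclidean bounds as in \cite[(26) and (27)]{BCFR2}, one shows that there is a regularized local operator $\mathcal{L}_\gamma$ --- a principal-value integral over $(x/2,2x)$ with kernel a constant multiple of $|x-y|^{-1-2i\gamma}$, plus the appropriate bounded-multiplier correction --- such that both $\Delta_\lambda^{i\gamma}(f)+\mathcal{L}_\gamma(f)$ and $\mathfrak{D}^{i\gamma}(f_{\rm e})-\mathcal{L}_\gamma(f)$ lie in $L^\infty(0,\infty)$, with norm $\le C\|f\|_{BMO_{\rm o}((0,\infty))}$. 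Subtracting, $\Delta_\lambda^{i\gamma}(f)+\mathfrak{D}^{i\gamma}(f_{\rm e})\in L^\infty(0,\infty)$, whence $\Delta_\lambda^{i\gamma}(f)\in BMO(0,\infty)$; together with $(Bi)$ this shows $\Delta_\lambda^{i\gamma}(f)\in BMO_{\rm o}(0,\infty)$ with the required estimate.

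The main obstacle is the kernel analysis described in the first paragraph: extracting from \eqref{40.1} the near-diagonal expansion $K_\lambda^\gamma(x,y)=c_\gamma|x-y|^{-1-2i\gamma}+O(1/x)$ on $x/2<y<2x$ and the off-diagonal decay, and keeping careful track of the principal value and of the bounded multiplier $\alpha(\varepsilon)$ of \eqref{H53A} throughout the $BMO$ comparison. Once these estimates are available, everything else is a routine adaptation of the arguments already used for $R_\lambda^*$ in Proposition~\ref{Riesz} and of the vector-valued Calder\'on--Zygmund machinery of Sections~\ref{sec:Proof1} and~\ref{sec:Proof2}.
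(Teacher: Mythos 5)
Your proposal is correct in outline, but it takes a genuinely different route from the paper on both halves. For $E=H^1_{\rm o}$ the paper does no kernel analysis at all: it observes that $\Delta_\lambda^{i\gamma}$ is the Hankel multiplier associated with $\varphi_\gamma(y)=y^{2i\gamma}$, checks the H\"ormander-type condition $\sup_{t>0}\|\eta(\cdot)\varphi_\gamma(t\cdot)\|_{L^2_1(0,\infty)}<\infty$, and invokes the multiplier theorem \cite[Theorem 4.11]{BDT} to get boundedness on $H^1_{\rm o}(0,\infty)$ directly (then identifies the extension with $\Delta_\lambda^{i\gamma}$ via the weak $(1,1)$ bound). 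You instead run the atomic decomposition of Proposition \ref{Prop1} plus a $C[1/N,N]$-valued Calder\'on--Zygmund argument for $P_*^\lambda\circ\Delta_\lambda^{i\gamma}$, as in Proposition \ref{Riesz}; this works but forces you to establish uniform-in-$s$ standard estimates for the composed kernel $\mathcal{K}^\gamma(s;x,y)$, which is the heaviest computation in your plan and which the multiplier theorem renders unnecessary. For $E=BMO_{\rm o}$ the paper again avoids fine kernel analysis: after the $(Bi)$ estimate (which you reproduce), it proves membership in $BMO$ by duality, pairing $\Delta_\lambda^{i\gamma}(f)$ against $\infty$-atoms $g$, using the transpose identity $\int_0^\infty\Delta_\lambda^{i\gamma}(f)\,g=\int_0^\infty f\,\Delta_\lambda^{i\gamma}(g)$ together with the just-proven $H^1_{\rm o}$-boundedness and \cite[Proposition 2.5]{BCCFR2}; the only kernel input is the single off-diagonal bound \eqref{H60}. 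Your alternative --- comparing $K_\lambda^\gamma$ with the Euclidean kernel $c_\gamma|x-y|^{-1-2i\gamma}$ of $(-d^2/dx^2)^{i\gamma}$ applied to the even extension --- is in the spirit of the paper's treatment of $R_\lambda^*$ and of Lemma \ref{Lep30}, and the near-diagonal expansion you assert is in fact true (it follows from the large-argument asymptotics $\sqrt{z}I_{\lambda-1/2}(z)=e^z(2\pi)^{-1/2}(1+O(1/z))$, which give $|\partial_t(W_t^\lambda(x,y)-W_t(x-y))|\lesssim t^{-1/2}(xy)^{-1}e^{-c|x-y|^2/t}$ for $t\le xy$, and from \eqref{H59} for $t\ge xy$, yielding an $O(1/\sqrt{xy})=O(1/x)$ error on $x/2<y<2x$). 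So each route buys something: yours is self-contained at the level of kernels and makes the analogy with Proposition \ref{Riesz} explicit, while the paper's is much shorter and sidesteps exactly the two steps you flag as "the main technical step". Be aware that in your write-up those two steps (the near-diagonal expansion with the principal value and $\alpha(\varepsilon)$ bookkeeping, and the uniform $C[1/N,N]$-valued CZ bounds for $\mathcal{K}^\gamma$) are asserted rather than carried out; they are true but constitute most of the actual work, so a complete proof along your lines would be substantially longer than the one in the paper.
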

\begin{proof}[Proof of Proposition \ref{imaginarypower}: the case of $E=H^1_{\rm o}$.]
In order to show that $\Delta _\lambda ^{i\gamma}$ is bounded from $H^1_{\rm o}(0,\infty )$ into itself we consider the function $\varphi _\gamma (x)=x^{2i\gamma}$, $x\in(0,\infty )$. It is not hard to see that
$$
\sup _{t>0}\|\eta (\cdot )\varphi _\gamma (t\cdot )\|_{L_1^2(0,\infty )}<\infty ,
$$
where $\eta\in C_c^\infty (0,\infty )$, the space of smooth functions with compact support on $(0,\infty )$, and $\|\cdot \|_{L_1^2(0,\infty )}$  denotes the Sobolev norm of order 1 and exponent 2. According to \cite[Theorem 4.11]{BDT} we deduce that $\Delta _\lambda ^{i\gamma}$ can be extended from $\mathcal{A}=\{\mbox{$a$ $\infty$-atom for $H_{\rm o}^1(0,\infty )$}\}$ to $H_{\rm o}^1(0,\infty )$ as a bounded operator from $H_{\rm o}^1(0,\infty )$ into itself. We denote by $\widetilde{\Delta}_\lambda ^{i\gamma }$ to this extension.

Suppose that $f=\sum_{j\in \N}\lambda _ja_j$, where, for every $j\in \mathbb{N}$, $a_j$ is a $\infty$-atom
and $\lambda _j\in \mathbb{C}$ such that $\sum_{j\in \N}|\lambda _j|<\infty $. We have that
$$
\widetilde{\Delta}_\lambda ^{i\gamma }(f)=\sum_{j\in \N}\lambda _j\Delta _\lambda ^{i\gamma }(a_j), \mbox{ in }H^1_{\rm o}(0,\infty ).
$$
Since $H_{\rm o}^1(0,\infty )$ is continuously contained in $L^1(0,\infty )$, the series $\sum_{j=0}^\infty\lambda _j\Delta _\lambda ^{i\gamma} (a_j)$ converges also in $L^1(0,\infty )$, and then in $L^{1,\infty }(0,\infty )$. Also, the operator $\Delta_\lambda ^{i\gamma} $ is bounded from $L^1(0,\infty )$ into $L^{1,\infty }(0,\infty )$. These facts lead to $\widetilde{\Delta }_\gamma ^{i\gamma}(f)=\Delta _\lambda ^{i\gamma}(f)$.

We conclude that $\Delta _\lambda ^{i\gamma }$ is bounded from $H^1_{\rm o}(0,\infty )$ into itself.
\end{proof}

\begin{proof}[Proof of Theorem \ref{imaginarypower}: the case of $E=BMO_{\rm o}$]
In a first step we prove that
\begin{equation}\label{H55}
|K_\lambda ^\gamma (x,y)|\leq C\frac{(xy)^\lambda }{|x-y|^{2\lambda +1}},\quad x,y\in (0,\infty ),\;x\not =y,
\end{equation}
being $K_\lambda ^\gamma $ the kernel in (\ref{40.1}).

We will use the following properties of Bessel functions $I_\nu $, $\nu >-1$ (see \cite[pp. 108, 110 and 123]{Leb}):
\begin{equation}\label{H56}
\frac{d}{dz}(z^{-\nu }I_\nu (z))=z^{-\nu}I_{\nu +1},\quad z\in (0,\infty ),
\end{equation}
\begin{equation}\label{H57}
I_\nu (z)\sim \frac{1}{2^\nu\Gamma (\nu +1)}z^\nu ,\mbox{ as }z\rightarrow 0^+,
\end{equation}
and
\begin{equation}\label{H58}
e^{-z}\sqrt{z}I_\nu (z)=\mathcal{O}\Big(\frac{1}{z}\Big).
\end{equation}

According to (\ref{H55}) we have that
\begin{eqnarray*}
\partial _tW_t^\lambda (x,y)&=&\partial _t\left[\frac{(xy)^\lambda }{(2t)^{\lambda +1/2}}\left(\frac{xy}{2t}\right)^{-\lambda +1/2}I_{\lambda -1/2}\left(\frac{xy}{2t}\right)e^{-(x^2+y^2)/(4t)}\right]\\
&=&e^{-(x^2+y^2)/(4t)}\left[-(2\lambda +1)(2t)^{-\lambda -3/2}(xy)^\lambda \left(\frac{xy}{2t}\right)^{-\lambda +1/2}I_{\lambda -1/2}\left(\frac{xy}{2t}\right)\right.\\
&&-\frac{(xy)^\lambda }{(2t)^{\lambda +1/2}}\frac{xy}{2t^2}\left(\frac{xy}{2t}\right)^{-\lambda +1/2}I_{\lambda +1/2}\left(\frac{xy}{2t}\right)\\
&&\left. +\frac{(xy)^\lambda }{(2t)^{\lambda +1/2}}\left(\frac{xy}{2t}\right)^{-\lambda +1/2}I_{\lambda -1/2}\left(\frac{xy}{2t}\right)\frac{x^2+y^2}{4t^2}\right],\quad t,x,y\in (0,\infty ).
\end{eqnarray*}
From (\ref{H57}) we deduce that
\begin{eqnarray*}
\left|\partial _tW_t^\lambda (x,y)\right|&\leq&Ce^{-(x^2+y^2)/(4t)}\left(\frac{(xy)^\lambda }{t^{\lambda +3/2}}+\frac{(xy)^{\lambda +2}}{t^{\lambda +7/2}}+\frac{(xy)^\lambda }{t^{\lambda +5/2}}(x^2+y^2)\right)\\
&\leq&Ce^{-c(x^2+y^2)/t}\frac{(xy)^\lambda }{t^{\lambda +3/2}},\quad t,x,y\in (0,\infty )\mbox{ and }xy\leq t,
\end{eqnarray*}
and by (\ref{H58}) we get
\begin{eqnarray*}
\left|\partial _tW_t^\lambda (x,y)\right|&\leq&Ce^{-|x-y|^2/(4t)}\left(\frac{1}{t^{3/2}}+\frac{|x-y|^2}{t^{5/2}}\right)\\
&\leq&C\frac{e^{-c|x-y|^2/t}}{t^{3/2}},\quad t,x,y\in (0,\infty )\mbox{ and }xy\geq t.
\end{eqnarray*}
Hence,
\begin{equation}\label{H59}
\left|\partial _tW_t^\lambda (x,y)\right|\leq Ce^{-c|x-y|^2/t}\frac{(xy)^\lambda }{t^{\lambda +3/2}},\quad t,x,y\in (0,\infty ),
\end{equation}
and then,
\begin{equation}\label{H60}
|K_\lambda ^\gamma (x,y)|\leq C(xy)^\lambda\int_0^\infty \frac{e^{-c|x-y|^2/t}}{t^{\lambda +3/2}}dt\leq C\frac{(xy)^\lambda}{|x-y|^{2\lambda +1}},\quad x,y\in (0,\infty ),\;x\not=y.
\end{equation}
We define the operator $\Delta _\lambda ^{i\gamma}$ on $BMO_{\rm o}(0,\infty )$ as follows: if $f\in BMO_{\rm o}(0,\infty )$ and $\delta >0$,
$$
\Delta _\lambda ^{i\gamma}(f)(x)=\Delta _\lambda ^{i\gamma}(f\chi _{(0,2\delta )})(x)+\int_{2\delta} ^\infty K_\lambda ^\gamma (x,y)f(y)dy,\mbox{ a.e. }x\in (0,\delta ).
$$
This definition does not depend on $\delta$ (in the suitable sense). By (\ref{H60}) we have that, for every $\delta >0$,
\begin{eqnarray*}
\frac{1}{\delta}\int_0^\delta |\Delta _\lambda ^{i\gamma}(f)(x)|dx&\leq&\left(\frac{1}{\delta}\int_0^\delta |\Delta _\lambda ^{i\gamma}(f\chi _{(0,2\delta )})(x)|^2dx\right)^{1/2}+\frac{C}{\delta}\int_0^\delta \int_{2\delta}^\infty \frac{(xy)^\lambda}{|x-y|^{2\lambda +1}}|f(y)|dydx \\
&\leq&\left(\frac{1}{\delta}\int_0^{2\delta }|f(y)|^2dy\right)^{1/2}+\frac{C}{\delta}\int_0^\delta x^\lambda \int_{2\delta }^\infty \frac{|f(y)|}{y^{\lambda+1}}dydx\\
&\leq&C\|f\|_{BMO_{\rm o}(0,\infty )},     \quad f\in BMO_{\rm o}(0,\infty ).
\end{eqnarray*}
Hence, $|\Delta _\lambda ^{i\gamma}(f)(x)|<\infty$, a.e. $x\in (0,\infty )$, for every $f\in BMO _{\rm o}(0,\infty )$.

Let $f\in BMO_{\rm o}(0,\infty )$. We are going to show that $\Delta _\lambda ^{i\gamma}(f)\in BMO(0,\infty )$. It is wellknown that $BMO(0,\infty )=(H^1(0,\infty ))^*$. We denote by $\mathcal{A}(0,\infty )=\mbox{span }\{a\mbox{ is $\infty$-atom satisfying $(Aii)$}\}$. Let $g\in \mathcal{A}$. We choose $\delta >0$ such that $\supp g\subset (0,\delta )$. Then,
$$
\Delta _\lambda ^{i\gamma}(f)(x)=\Delta _\lambda ^{i\gamma}(f\chi _{(0,2\delta )})(x)+\int_{2\delta} ^\infty K_\lambda ^\gamma (x,y)f(y)dy,\mbox{ a.e. }x\in (0,\delta ).
$$
Since $\Delta _\lambda ^{i\gamma}$ is bounded from $L^2(0,\infty )$ into itself, (\ref{H60}) leads to
\begin{eqnarray*}
\int_0^\infty |\Delta _\lambda ^{i\gamma}(f)(x)||g(x)|dx&\leq&\int_0^\delta |\Delta _\lambda ^{i\gamma}(f\chi _{(0,2\delta )}(x)||g(x)|dx\\
&&+\int_0^\delta \int_{2\delta }^\infty |K_\lambda ^\gamma (x,y)||f(y)|dy|g(x)|dx\\
&\leq&\left(\int_0^\delta |\Delta _\lambda ^{i\gamma}(f\chi _{(0,2\delta )}(x)|^2dx\right)^{1/2}\left(\int_0^\delta |g(x)|^2dx\right)^{1/2}\\
&&+C\int_0^\delta \int_{2\delta }^\infty \frac{(xy)^\lambda }{|x-y|^{2\lambda +1}}|f(y)|dy|g(x)|dx\\
&\leq&C\left[\left(\frac{1}{\delta }\int_0^{2\delta}|f(x)|^2dx\right)^{1/2}+\int_0^\delta x^\lambda \int_{2\delta }^\infty \frac{|f(y)|}{y^{\lambda +1}}dydx\right]\\
&\leq&C\|f\|_{BMO_{\rm o}(0,\infty )},
\end{eqnarray*}
because $g\in L^\infty (0,\infty )$.

We define the functional $\mathcal{L}$ on $\mathcal{A}$ by
$$
\mathcal{L}(g)=\int_0^\infty \Delta_\lambda ^{i\gamma }(f)(x)g(x)dx,\quad g\in \mathcal{A}.
$$
Let $g\in \mathcal{A}$ such that $\mbox{supp }g\subset (0,\delta )$ with $\delta >0$. We can write
$$
\mathcal{L}(g)=\int_0^\infty \Delta _\lambda ^{i\gamma}(f\chi _{(0,2\delta )})(x)g(x)dx+\int_0^\infty g(x)\int_{2\delta} ^\infty K_\lambda ^\gamma (x,y)f(y)dydx.
$$
Since, as it was seen,
$$
\int_0^\infty |g(x)|\int_{2\delta }^\infty |K_\lambda ^\gamma (x,y)||f(y)|dydx<\infty,
$$
according to (\ref{H53A}) we get
$$
\int_0^\infty g(x)\int_{2\delta}^\infty K_\lambda ^\gamma (x,y)f(y)dydx=\int_{2\delta }^\infty f(y)\int_0^\delta K_\lambda ^\gamma (x,y)g(x)dxdy=\int_{2\delta }^\infty f(y)\Delta _\lambda ^{i\gamma }(g)(y)dy.
$$
By using Plancherel's equality for Hankel transforms it follows that
$$
\int_0^\infty g(x)\Delta _\lambda ^{i\gamma}(f\chi _{(0,2\delta )})(x)dx=\int_0^{2\delta }f(y)\Delta _\lambda ^{i\gamma }(g)(y)dy.
$$
Hence,
$$
\mathcal{L}(g)=\int_0^\infty f(y)\Delta_\lambda ^{i\gamma }(g)(y)dy.
$$
We  also have that
\begin{eqnarray*}
\int_0^\infty |f(y)\Delta_\lambda ^{i\gamma }(g)(y)|dy&\leq&\int_0^{2\delta}|f(y)||\Delta_\lambda ^{i\gamma }(g)(y)|dy+\int_{2\delta }^\infty |f(y)|\int_0^\delta |K_\lambda ^\gamma (x,y)||g(x)|dxdy\\
&\leq&\left(\int_0^{2\delta }|f(y)|^2dy\right)^{1/2}\|\Delta _\lambda ^{i\gamma}(g)\|_{L^2(0,\infty )}+C\int_{2\delta }^\infty \frac{\delta ^{\lambda +1}}{y^{\lambda +1}}|f(y)|dy\\
&\leq&C(\sqrt{\delta }\|g\|_{L^2(0,\infty )}+1)\|f\|_{BMO_{\rm o}(0,\infty )}\leq C\|f\|_{BMO_{\rm o}(0,\infty )}.
\end{eqnarray*}

According to \cite[Proposition 2.5]{BCCFR2} adapted to this Bessel setting (see also \cite{Bonami}), since $\Delta _\lambda ^{i\gamma}$ is bounded from $H^1_{\rm o}(0,\infty )$ into itself, we deduce
$$
|\mathcal{L}(g)|\leq C\|f\|_{BMO_{\rm o}(0,\infty )}\|\Delta _\lambda ^{i\gamma }(g)\|_{H^1_{\rm o}(0,\infty )}\leq C\|f\|_{BMO_{\rm o}(0,\infty )}\|g\|_{H^1_{\rm o}(0,\infty )}.
$$
Here $C>0$ does not depend on $g$.

We conclude that $\Delta _\lambda ^{i\gamma}(f)\in BMO_{\rm o}(0,\infty )$ and $\|\Delta _\lambda ^{i\gamma}(f)\|_{BMO_{\rm o}(0,\infty )}\leq C\|f\|_{BMO_{\rm o}(0,\infty )}$.
\end{proof}
Suppose now that $(iii)$ holds in the Hardy setting. Let $f\in H_{\rm o}^1(0,\infty )\otimes \mathbb{B}$. According to Proposition \ref{imaginarypower}, $\Delta _\lambda ^{i\gamma}(f)\in H_{\rm o}^1(0,\infty )\otimes \mathbb{B}$ . Then, by using (\ref{H54}) and by taking into account \cite[Theorem 6.2]{Nee} we get
$$
\|\Delta _\lambda ^{i\gamma}(f)\|_{H^1_{\rm o}((0,\infty ),\mathbb{B})}\leq C\|G_\mathbb{B}^{\lambda ,\beta}(\Delta _\lambda ^{i\gamma}(f))\|_{H^1_{\rm o}((0,\infty ),\mathbb{B})}\leq
\|G_\mathbb{B}^{\lambda ,\beta +1}(f)\|_{H^1_{\rm o}((0,\infty ),\mathbb{B})}\leq C\|f\|_{H^1_{\rm o}((0,\infty ),\mathbb{B})}.
$$
According to \cite[Theorem 4.1]{MTX} we deduce that $\Delta _\lambda ^{i\gamma}$ can be extended to $L^p((0,\infty ),\mathbb{B})$ into itself, for every $1<p<\infty$.

Hence, by \cite[Proposition 5.1]{BCR3} we conclude that $\mathbb{B}$ is a UMD space.

By proceeding in a similar way we can prove that $\mathbb{B}$ is a UMD space provided that $(iii)$ holds in the BMO situation.

Thus, the proof of this theorem is finished.

%\newpage
%%%%%%%%%%%%%%%%%%%%%%%%%%%%%%%%%%%%%%%%%%%%%%%%%%%%%%%%%%%%%%%%%%%%%%%%%%%%%%%%%%%%%%%%%%%%%%%%%%%%%%%%%%%%%%%%%%%%
%       REFERENCIAS
%%%%%%%%%%%%%%%%%%%%%%%%%%%%%%%%%%%%%%%%%%%%%%%%%%%%%%%%%%%%%%%%%%%%%%%%%%%%%%%%%%%%%%%%%%%%%%%%%%%%%%%%%%%%%%%%%%%%

%\bibliographystyle{siam}
%\bibliography{references}

\end{document}